\newcommand{\RR}{\mathbb R}
\newcommand{\ee}{\mathrm e}
\DeclareMathOperator*{\st}{s.t.}
\DeclareMathOperator*{\rank}{rank}
\DeclareMathOperator*{\diag}{diag}
\DeclareMathOperator*{\tr}{tr}
\DeclareMathOperator*{\vect}{vec}
\DeclareMathOperator*{\mat}{mat_S}
\DeclareMathOperator{\RIP}{RIP}
\DeclareMathOperator{\BDP}{BDP}
\DeclareMathOperator{\OPT}{OPT}
\DeclareMathOperator{\LMI}{LMI}
\DeclarePairedDelimiter{\abs}{\lvert}{\rvert}
\DeclarePairedDelimiter{\norm}{\lVert}{\rVert}
\title{Global and Local Analyses of Nonlinear Low-Rank Matrix Recovery Problems}
\author{\name Yingjie Bi \email yingjiebi@berkeley.edu \\
\addr Department of Industrial Engineering and Operations Research\\
University of California\\
Berkeley, CA 94720, USA
\AND
\name Javad Lavaei \email lavaei@berkeley.edu \\
\addr Department of Industrial Engineering and Operations Research\\
University of California\\
Berkeley, CA 94720, USA
\thanks{A preliminary version of this paper has appeared in \cite{BL2020}. Compared with the conference paper, we have developed a major new result on the local guarantee for the absence of spurious local minima in the general rank-$r$ case and included a new application from machine learning to illustrate the effectiveness of our results.}}
\begin{document}
\maketitle

\begin{abstract}
The restricted isometry property (RIP) is a well-known condition that guarantees the absence of spurious local minima in low-rank matrix recovery problems with linear measurements. In this paper, we introduce a novel property named bound difference property (BDP) to study low-rank matrix recovery problems with nonlinear measurements. Using RIP and BDP jointly, we first focus on the rank-1 matrix recovery problem, for which we propose a new criterion to certify the nonexistence of spurious local minima over the entire space. We then analyze the general case with an arbitrary rank and derive a condition to rule out the possibility of having a spurious solution in a ball around the true solution. The developed conditions lead to much stronger theoretical guarantees than the existing bounds on RIP.
\end{abstract}

\section{Introduction}\label{sec:intro}

The \emph{low-rank matrix recovery problem} plays a central role in many machine learning problems, such as recommendation systems \citep{KBV2009} and motion detection \citep{ZYY2013,FS2020}. It also appears in engineering problems, such as power system state estimation \citep{ZML2018}. The goal of this problem is to recover an unknown low-rank matrix $M^* \in \RR^{n \times n}$ from certain measurements of the entries of $M^*$. The measurement equations may be linear or nonlinear, which will be discussed separately in the next two subsections.

\subsection{Low-Rank Matrix Recovery with Linear Measurements}

The basic form of the low-rank matrix recovery problem is the symmetric and noiseless one with linear measurements and the quadratic loss. The linear measurements can be represented by a linear operator $\mathcal A: \RR^{n \times n} \to \RR^m$ given by
\[
\mathcal A(M)=(\langle A_1,M \rangle,\dots,\langle A_m,M \rangle)^T.
\]
The ground-truth matrix $M^*$ is assumed to be symmetric and positive semidefinite with $\rank(M^*) \leq r$. The recovery problem can be formulated as follows:
\begin{equation}\label{eq:recorig}
\begin{aligned}
\min \quad & \frac{1}{2}\norm{\mathcal A(M)-d}^2 \\
\st \quad & \rank(M) \leq r, \\
\quad & M \succeq 0, \quad M \in \RR^{n \times n},
\end{aligned}
\end{equation}
where $d=\mathcal A(M^*)$. By factoring the decision variable $M$ into its low-rank factors $XX^T$, the above problem can be rewritten as the unconstrained problem:
\begin{equation}\label{eq:linearrec}
\min_{X \in \RR^{n \times r}}\left\{\frac{1}{2}\norm{\mathcal A(XX^T)-d}^2\right\}.
\end{equation}

The optimization \eqref{eq:linearrec} is commonly solved by local search methods. Since \eqref{eq:linearrec} is generally nonconvex, local search methods may converge to a spurious local minimum (a non-global local minimum is called a spurious solution). To provide theoretical guarantees on the performance of local search methods for the low-rank matrix recovery, several papers have developed various conditions under which the optimization \eqref{eq:linearrec} is free of spurious local minima. In what follows, we will briefly review the state-of-the-art results on this problem.

Given a linear operator $\mathcal A$, define its corresponding quadratic form $\mathcal Q: \RR^{n \times n} \times \RR^{n \times n} \to \RR$ as
\begin{equation}\label{eq:linearquad}
[\mathcal Q](K,L)=\langle \mathcal A(K),\mathcal A(L) \rangle,
\end{equation}
for all $K,L \in \RR^{n \times n}$.

\begin{definition}[\cite{RFP2010}]\label{def:linearrip}
A quadratic form $\mathcal Q:\RR^{n \times n} \times \RR^{n \times n} \to \RR$ satisfies the \emph{restricted isometry property} (RIP) of rank $2r$ for a constant $\delta \in [0,1)$, denoted as $\delta$-$\RIP_{2r}$, if
\[
(1-\delta)\norm{K}_F^2 \leq [\mathcal Q](K,K) \leq (1+\delta)\norm{K}_F^2
\]
for all matrices $K \in \RR^{n \times n}$ with $\rank(K) \leq 2r$.
\end{definition}

\cite{GJZ2017} showed that the problem \eqref{eq:linearrec} has no spurious local minima if the quadratic form $\mathcal Q$ satisfies $\delta$-$\RIP_{2r}$ with $\delta<1/5$. \cite{ZSL2019} strengthened this result for the special case of $r=1$ by showing that $\delta$-$\RIP_{2r}$ with $\delta<1/2$ is sufficient to guarantee the absence of spurious local minima for \eqref{eq:linearrec}. \cite{ZJSL2018} provided an example with a spurious local minimum in case of $\delta=1/2$ to support the tightness of the bound.

When $\delta \geq 1/5$ in the case $r>1$ or $\delta \geq 1/2$ in the case $r=1$, the $\delta$-$\RIP_{2r}$ property can still be useful in the sense that they can lead to \emph{local} guarantees for the absence of spurious local minima in the problem \eqref{eq:linearrec}, rather than \emph{global} guarantees. This means that there is no spurious local minimizer $X$ as long as $XX^T$ is in a neighborhood of $M^*$. Under a local guarantee, local search methods would still converge to the global optimal solution if they are initialized sufficiently close to the ground truth. Many techniques for finding such a good initial point have been developed in the literature (see Section~\ref{sec:related} for a brief discussion). In the rank-1 case, \cite{ZSL2019} proved the following local guarantee for the absence of spurious local minima:

\begin{theorem}[\cite{ZSL2019}]\label{thm:prvlocalrank1}
Assume that $r=1$ and the quadratic form $\mathcal Q$ satisfies the $\delta$-$\RIP_2$ property for some constant $\delta$ such that
\[
\delta<\sqrt{1-\frac{\epsilon^2}{2(1-\epsilon)}}
\]
with $0<\epsilon \leq (\sqrt 5-1)/2$. Then, the problem \eqref{eq:linearrec} has no spurious local minimizer $X$ that satisfies
\[
\norm{XX^T-M^*}_F \leq \epsilon\norm{M^*}_F.
\]
\end{theorem}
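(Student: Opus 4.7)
Suppose for contradiction that $x \in \RR^n$ is a spurious local minimizer of \eqref{eq:linearrec} with $\norm{xx^T - M^*}_F \leq \epsilon \norm{M^*}_F$. Since $r = 1$, write $M^* = zz^T$ for some $z \in \RR^n$ and, by replacing $x$ with $-x$ if necessary, assume $x^T z \geq 0$; set $\Delta = xx^T - zz^T$ and $H = \mathcal{A}^*\mathcal{A}(\Delta)$. The strategy is to combine the first- and second-order necessary optimality conditions at $x$ with the $\delta$-$\RIP_2$ assumption to derive an inequality that contradicts the hypothesized bound on $\delta$.

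First, I would record the optimality conditions. The first-order condition $\nabla f(x) = 2Hx = 0$ forces $x^T H x = [\mathcal Q](\Delta, xx^T) = 0$, which by bilinearity also yields $[\mathcal Q](\Delta, zz^T) = -[\mathcal Q](\Delta, \Delta)$ and $[\mathcal Q](\Delta, xu^T + ux^T) = 2\,u^T H x = 0$ for every $u \in \RR^n$. The second-order condition reads $2\,u^T H u + \norm{\mathcal A(xu^T + ux^T)}^2 \geq 0$ for every $u$.

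Next, I would specialize the SOC to the direction $u = z - (z^T x / \norm{x}^2)\,x$, the projection of $z$ onto $x^\perp$. Since $Hx = 0$, this gives $u^T H u = z^T H z = -[\mathcal Q](\Delta, \Delta)$, so that, setting $M_u = xu^T + ux^T$, the SOC reduces to $[\mathcal Q](M_u, M_u) \geq 2\,[\mathcal Q](\Delta, \Delta)$, while the first-order identity yields $[\mathcal Q](M_u, \Delta) = 0$. Crucially, both $M_u$ and $\Delta$ lie in the linear span of $\{xx^T,\, xz^T + zx^T,\, zz^T\}$, so every combination $M_u + c\,\Delta$ with $c \in \RR$ has rank at most two and is eligible for the $\delta$-$\RIP_2$ bound.

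Finally, applying RIP to the one-parameter family $M_u + c\,\Delta$ and combining with the SOC gives, for every $c \in \RR$,
\[
(2 + c^2)(1-\delta)\,\norm{\Delta}_F^2 \;\le\; (1+\delta)\,\norm{M_u + c\,\Delta}_F^2.
\]
Substituting the explicit expressions $\norm{M_u}_F^2 = 2(\norm{x}^2\norm{z}^2 - (x^Tz)^2)$ and $\norm{\Delta}_F^2 = (\norm{x}^2 - \norm{z}^2)^2 + 2(\norm{x}^2\norm{z}^2 - (x^Tz)^2)$, optimizing over $c$, and using the neighborhood bound $\norm{\Delta}_F \leq \epsilon\norm{z}^2$, the resulting quadratic-in-$\delta$ analysis should produce $\delta \geq \sqrt{1 - \epsilon^2/(2(1-\epsilon))}$, contradicting the hypothesis. \emph{Main obstacle:} applying RIP only to $M_u$ (at $c = 0$) and to $\Delta$ individually yields only the weaker, $\epsilon$-free bound $\delta < 1/3$; extracting the sharp local bound requires exploiting the rank-two invariance of $\mathrm{span}\{xx^T, xz^T + zx^T, zz^T\}$ to apply RIP uniformly across $M_u + c\Delta$ and then carefully optimizing $c$ against the neighborhood constraint. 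The restriction $\epsilon \leq (\sqrt{5}-1)/2$ enters to ensure that $1 - \epsilon - \epsilon^2 \geq 0$, which keeps a discriminant in the final quadratic non-negative.
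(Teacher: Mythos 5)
There is a genuine gap at the final step, and it is quantitative, not just a matter of unfinished algebra. Your reductions are fine up to the point where you have, for all $c\in\RR$, $(2+c^2)(1-\delta)\norm{\Delta}_F^2 \le (1+\delta)\norm{M_u+c\Delta}_F^2$. Writing $a=\norm{x}^2$, $b=\norm{z}^2$, $t=(x^Tz)^2$, $P=ab-t$, $E=\norm{\Delta}_F$ and $\rho=(1-\delta)/(1+\delta)$, the full strength of this one-parameter family (a quadratic in $c$ with positive leading coefficient) is exactly the discriminant condition
\[
\frac{4tP^2}{a^2}=\langle M_u,\Delta\rangle^2 \;\le\; 2(1-\rho)E^2\bigl(P-\rho E^2\bigr),
\]
and your argument yields a contradiction precisely when this fails. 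Now take $x$ arbitrarily close to $z$ with $P=\tfrac{2}{9}E^2$ and $(a-b)^2=\tfrac{5}{9}E^2$; such configurations exist for every $E>0$, hence inside every ball $\norm{XX^T-M^*}_F\le\epsilon\norm{M^*}_F$. As $E\to 0$ one has $t/a^2\to 1$ and the condition becomes $16/81\le 2(1-\rho)(2/9-\rho)$, which holds for every $\rho\le 1/9$, i.e.\ for every $\delta\ge 4/5$. So for these points all of your derived inequalities are mutually consistent once $\delta\ge 4/5$: no choice of $c$ (nor any "quadratic-in-$\delta$ analysis") can rule them out. But the theorem's threshold $\sqrt{1-\epsilon^2/(2(1-\epsilon))}$ exceeds $4/5$ for all $\epsilon\lesssim 0.56$ and tends to $1$ as $\epsilon\to 0$, so your scheme cannot reach the claimed bound; it caps out strictly below it in exactly the regime (small $\epsilon$) where the theorem is strongest. (At $c=0$ it gives $\delta\ge 1/3$, as you note; the optimization over $c$ improves this only to $4/5$, not to a bound that improves as $\epsilon\to 0$.)

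The missing ingredient is the mechanism that actually produces the $\epsilon$-dependence: when $x$ is close to $z$, the error $\Delta$ is \emph{almost in the range} of the map $u\mapsto xu^T+ux^T$. Concretely, with $z=c_1x+c_2w$ and $y=\tfrac{1-c_1^2}{2}x-c_1c_2w$ one has $xy^T+yx^T=\Delta+c_2^2ww^T$ with $c_2^2\le\norm{x-z}_F^2$ controlled by $\norm{\Delta}_F$ (this is where a bound of the type of Lemma~\ref{lem:errorlowerbound} enters); feeding this $y$ into the first-order condition $[\mathcal Q](xu^T+ux^T,\Delta)=0$ forces $[\mathcal Q](\Delta,\Delta)$, hence $(1-\delta)E^2$, to be small whenever the residual $c_2^2$ is small relative to $E$, which is what drives the admissible $\delta$ toward $1$ as $\epsilon\to 0$. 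Your single SOC direction $u\perp x$ and the pencil $M_u+c\Delta$ discard precisely this information (you only use the FOC to kill the cross term $[\mathcal Q](M_u,\Delta)$). Note also that the paper does not prove Theorem~\ref{thm:prvlocalrank1}; it quotes it from \cite{ZSL2019}, whose proof (like the paper's own local analysis in Lemmas~\ref{lem:necessaryfoc} and~\ref{lem:deltaminfoc}, via the dual certificate and the angle bound of Lemma~\ref{lem:eigangle}) is built exactly on this near-range/angle argument rather than on a second-order inequality in a fixed direction.
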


Note that $\epsilon\norm{M^*}_F$ defines the radius of the ball around the ground truth that is devoid of spurious solutions. The recent work \citep{ZZ2020} generalized the techniques in \cite{ZSL2019}, which led to the following result that can be applied to the rank-$r$ case for any $r$ but is weaker than Theorem~\ref{thm:prvlocalrank1} in the rank-1 case (rewritten here in an equivalent form):

\begin{theorem}[\cite{ZZ2020}]\label{thm:prvlocalrankr}
Assume that the quadratic form $\mathcal Q$ satisfies the $\delta$-$\RIP_{2r}$ property for some constant $\delta$ such that $
\delta<\sqrt{1-\epsilon}$ with $0<\epsilon \leq 1$. Then, the problem \eqref{eq:linearrec} has no spurious local minimizer $X$ that satisfies
\[
\norm{XX^T-M^*}_F \leq \epsilon\lambda_r(M^*).
\]
\end{theorem}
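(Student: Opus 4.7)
The plan is to suppose for contradiction that $X$ is a spurious second-order critical point of \eqref{eq:linearrec} lying in the ball $\norm{XX^T - M^*}_F \le \epsilon\lambda_r(M^*)$, and to derive a contradiction with the assumption $\delta < \sqrt{1-\epsilon}$. Throughout, I let $M = XX^T$, $Z = M - M^*$, and fix a rotation $R \in O(r)$ minimizing $\norm{X - X^*R}_F$. Setting $Y = X^*R$ (so $YY^T = M^*$) and $E = X - Y$, a short algebraic computation yields the key identity $XE^T + EX^T = Z + EE^T$.

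Step 1 is to record the two necessary optimality conditions. The first-order condition $\mathcal A^*\mathcal A(Z)\,X = 0$ is equivalent to $[\mathcal Q](Z,\, XK^T + KX^T) = 0$ for every $K \in \RR^{n\times r}$. The second-order condition along direction $U$ reads
\[
[\mathcal Q](XU^T + UX^T,\, XU^T + UX^T) + 2[\mathcal Q](Z,\, UU^T) \ge 0.
\]

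Step 2 is the heart of the argument: substitute $U = E$ into the Hessian inequality. By the identity above, the quadratic term becomes $[\mathcal Q](Z + EE^T,\, Z + EE^T)$, and the first-order condition applied at $K = E$ gives $[\mathcal Q](Z,\, Z + EE^T) = 0$, i.e., $[\mathcal Q](Z, EE^T) = -[\mathcal Q](Z, Z)$. Expanding and collecting terms collapses the inequality to
\[
[\mathcal Q](EE^T,\, EE^T) \ge 3\,[\mathcal Q](Z, Z).
\]

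Step 3 invokes $\delta$-$\RIP_{2r}$ on both sides: $[\mathcal Q](Z, Z) \ge (1-\delta)\norm{Z}_F^2$ since $\rank(Z) \le 2r$, and $[\mathcal Q](EE^T, EE^T) \le (1+\delta)\norm{EE^T}_F^2$ since $\rank(EE^T) \le r$. Combined, $(1+\delta)\norm{EE^T}_F^2 \ge 3(1-\delta)\norm{Z}_F^2$. Step 4 chains this with a classical alignment-perturbation lemma, which controls $\norm{E}_F^2$ (and hence $\norm{EE^T}_F \le \norm{E}_F^2$) by a constant multiple of $\norm{Z}_F^2/\lambda_r(M^*)$, together with the local hypothesis $\norm{Z}_F \le \epsilon\lambda_r(M^*)$. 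This produces a purely numerical inequality in $\delta$ and $\epsilon$; showing that it fails when $\delta < \sqrt{1-\epsilon}$ closes the argument.

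The main obstacle is recovering the exact threshold $\sqrt{1-\epsilon}$ stated in the theorem rather than a looser numerical bound. The direct choice $U = E$ combined with a generic Tu-type perturbation lemma typically gives a non-sharp constant; securing the stated bound likely requires either optimizing $U$ over a one-parameter family that more effectively exposes the negative curvature along $Z$, or using the refined inequality $\norm{EE^T}_F \le \norm{E}_{op}\norm{E}_F$ in a manner tightly coupled to the structure $Z = XE^T + EX^T - EE^T$ so that the spectral scales of $E$ and $Z$ are matched rather than bounded separately.
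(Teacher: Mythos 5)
The paper never proves this statement---it is quoted from \cite{ZZ2020}, whose argument (like the paper's own Theorem~\ref{thm:local}, established through Lemmas~\ref{lem:necessaryfoc}--\ref{lem:deltaminfoc}) follows the dual/SDP certification route: any operator rendering $X$ a spurious critical point is shown to violate RIP by exhibiting a feasible dual solution built from the first-order condition, Lemma~\ref{lem:eigangle}, and the alignment bound of Lemma~\ref{lem:errorlowerbound}. You instead run the classical direct perturbation argument (the first proof technique described in Section~\ref{sec:related}), and it is sound: the identity $XE^T+EX^T=Z+EE^T$, the collapse of the second-order condition at $U=E$ to $[\mathcal Q](EE^T,EE^T)\ge 3[\mathcal Q](Z,Z)$, and the RIP step giving $(1+\delta)\norm{EE^T}_F^2\ge 3(1-\delta)\norm{Z}_F^2$ are all correct (note $\rank(Z)\le 2r$ and $\rank(EE^T)\le r$). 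Your hedge about Step~4 is unnecessary---the crude estimates already close the argument, with no optimization over $U$ needed. The Procrustes alignment makes $X^TY\succeq 0$, so Lemma~\ref{lem:errorlowerbound} gives $\norm{E}_F^2\le \norm{Z}_F^2/\bigl(2(\sqrt2-1)\lambda_r(M^*)\bigr)$, and with $\norm{Z}_F\le\epsilon\lambda_r(M^*)$,
\[
\norm{EE^T}_F^2\le\norm{E}_F^4\le\frac{3+2\sqrt 2}{4}\,\epsilon^2\norm{Z}_F^2,
\qquad\text{hence}\qquad
\frac{1-\delta}{1+\delta}\le\frac{3+2\sqrt 2}{12}\,\epsilon^2
\]
at any spurious local minimizer in the ball. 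Setting $t=\sqrt{1-\epsilon}$, the hypothesis $\delta<t$ gives $\frac{1-\delta}{1+\delta}>\frac{1-t}{1+t}$, and $\frac{1-t}{1+t}\ge\frac{3+2\sqrt 2}{12}(1-t^2)^2$ is equivalent to $(1-t)(1+t)^3\le 12(3-2\sqrt 2)\approx 2.06$, which holds because $(1-t)(1+t)^3\le 27/16$ on $[0,1]$; this yields the contradiction. So your elementary route proves the quoted theorem, and in fact a condition of order $\epsilon^2$ in the spirit of Theorem~\ref{thm:local} (though with a worse constant than $\delta<\sqrt{1-\tfrac{3+2\sqrt 2}{4}\epsilon^2}$). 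What the SDP/dual route buys is sharper constants and the extension to nonlinear measurements; what your direct route buys is a short proof avoiding duality altogether.
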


\subsection{Nonlinear Low-Rank Matrix Recovery}

Given the above-mentioned results for the low-rank matrix recovery problems with linear measurements, it is natural to investigate whether these results can be extended to problems that are similar to \eqref{eq:linearrec} but have more complex objective functions. The purpose of this paper is to study the existence of spurious local minima for the general low-rank matrix recovery problem
\begin{equation}\label{eq:nonlinearrec}
\min_{X \in \RR^{n \times r}}f(XX^T),
\end{equation}
where $f: \RR^{n \times n} \to \RR$ is an arbitrary function. Problems of this form are abound in many machine learning tasks (see Section~\ref{sec:app} for an application). Moreover, every polynomial optimization problem can be formulated as such, and therefore the analysis of \eqref{eq:nonlinearrec} enables the design of global optimization techniques for nonconvex polynomial optimization \citep{MSFL2017}. In this paper, $f$ is always assumed to be twice continuously differentiable. The problem \eqref{eq:linearrec} is a special case of \eqref{eq:nonlinearrec} by choosing
\begin{equation}\label{eq:linearobj}
f(M)=\frac{1}{2}\norm{\mathcal A(M)-d}^2.
\end{equation}
In the case with linear measurements, note that $f(M^*)=0$ and therefore $M^*$ is a global minimizer of $f$. In other words, there are often infinitely many minimizers for $f$, but the goal is to find the ground-truth low-rank solution $M^*$. Similar to the linear measurement case, we assume that the problem \eqref{eq:nonlinearrec} has a ground truth $M^*=ZZ^T$ with $\rank(M^*) \leq r$ that is a global minimizer of $f(M)$.

The Hessian of the function $f$ in \eqref{eq:nonlinearrec}, denoted as $\nabla^2f(M)$, can be also regarded as a quadratic form whose action on any two matrices $K,L \in \RR^{n \times n}$ is given by
\[
[\nabla^2f(M)](K,L)=\sum_{i,j,k,l=1}^n\frac{\partial^2f}{\partial M_{ij}\partial M_{kl}}(M)K_{ij}L_{kl}.
\]
If $f$ is considered to be the special function in \eqref{eq:linearobj}, then its corresponding Hessian $\nabla^2f(M)$ becomes exactly the quadratic form $\mathcal Q$ defined in \eqref{eq:linearquad}. Therefore, we naturally extend the definition of the $\delta$-$\RIP_{2r}$ property for quadratic forms given in Definition~\ref{def:linearrip} to general functions $f$ by restricting their Hessian.

\begin{definition}\label{def:nonlinearrip}
A twice continuously differentiable function $f:\RR^{n \times n} \to \RR$ satisfies the \emph{restricted isometry property} of rank $2r$ for a constant $\delta \in [0,1)$, denoted as $\delta$-$\RIP_{2r}$, if
\begin{equation}\label{eq:nonlinearrip}
(1-\delta)\norm{K}_F^2 \leq [\nabla^2f(M)](K,K) \leq (1+\delta)\norm{K}_F^2
\end{equation}
for all matrices $M,K \in \RR^{n \times n}$ with $\rank(M) \leq 2r$ and $\rank(K) \leq 2r$.
\end{definition}

It is still unknown whether the $\delta$-$\RIP_{2r}$ condition could lead to the nonexistence of spurious local minima. However, \cite{LZT2019} proved that the problem \eqref{eq:nonlinearrec} has no spurious local minima under a stronger condition, named $\delta$-$\RIP_{2r,4r}$ with $\delta<1/5$, as defined below.

\begin{definition}\label{def:nonlinearrip4r}
A twice continuously differentiable function $f:\RR^{n \times n} \to \RR$ satisfies the \emph{restricted isometry property} of rank $(2r,4r)$ for a constant $\delta \in [0,1)$, denoted as $\delta$-$\RIP_{2r,4r}$, if
\[
(1-\delta)\norm{K}_F^2 \leq [\nabla^2f(M)](K,K) \leq (1+\delta)\norm{K}_F^2
\]
for all matrices $M,K \in \RR^{n \times n}$ with $\rank(M) \leq 2r$ and $\rank(K) \leq 4r$.
\end{definition}

For the general recovery problem \eqref{eq:nonlinearrec} with $r=1$, the previous results in \cite{ZSL2019} and \cite{LZT2019} both have serious limitations. The bound $\delta<1/2$ given in \cite{ZSL2019} is proven to be tight in the case when $f$ is generated by linear measurements, but it is not applicable to nonlinear measurements. The bound $\delta<1/5$ given in \cite{LZT2019} can be applied to a general function $f$, but it is not tight even in the linear case. To address these issues, we develop a new criterion to guarantee the absence of spurious local minima globally in \eqref{eq:nonlinearrec} for a general function $f$ in the rank-$1$ case, which is more powerful than the previous conditions. Unlike the bound given in \cite{LZT2019}, our new criterion completely depends on the properties of the Hessian of the function $f$ applied to rank-$2$ matrices, rather than rank-$4$ matrices. Note that the rank-$1$ case has applications in many problems, such as motion detection \citep{FS2020} and power system state estimation \citep{ZML2018}.

For the problem \eqref{eq:nonlinearrec} with $r>1$, we also present a new local guarantee for the absence of spurious local minima near the ground truth $M^*$. Our work not only offers the first result in the literature on the local guarantee for general nonlinear problems, but also exhibits an improvement over the previous results stated in Theorem~\ref{thm:prvlocalrank1} and Theorem~\ref{thm:prvlocalrankr} for problems with linear measurements.

\subsection{Notations}

$I_n$ is the identity matrix of size $n \times n$, and $\diag(a_1,\dots,a_n)$ is a diagonal matrix whose diagonal entries are $a_1,\dots,a_n$. $\mathbf A=\vect A$ is the vector obtained from stacking the columns of a matrix $A$. Given a vector $\mathbf A \in \RR^{n^2}$, define its symmetric matricization $\mat\mathbf A=(A+A^T)/2$, where $A \in \RR^{n \times n}$ is the unique matrix satisfying $\mathbf A=\vect A$. $A \otimes B$ is the Kronecker product of $A$ and $B$, which satisfies the well-known identity:
\[
\vect(AXB^T)=(B \otimes A)\vect X.
\]
For two matrices $A,B$ of the same size, define their inner product
\[
\langle A,B \rangle=\tr(A^TB)=\langle \vect A,\vect B\rangle,
\]
and let $\norm{A}_F=\sqrt{\langle A,A\rangle}$ denote the Frobenius norm of the matrix $A$, where $\tr(\cdot)$ is the trace operator. Moreover, $\norm{v}$ is the Euclidean norm of the vector $v$. For a square matrix $A \in \RR^{n \times n}$, $A \succeq 0$ means that $A$ is symmetric and positive semidefinite. Let
\[
\lambda_1(A) \geq \lambda_2(A) \geq \dots \geq \lambda_n(A)
\]
denote the eigenvalues of $A$ sorted in a decreasing order.

\section{Related Works}\label{sec:related}

The classical approach for solving low-rank matrix recovery problems is through convex relaxations. A semidefinite program can be obtained by removing the nonconvex low-rank constraint in \eqref{eq:recorig}. Since the seminal work by \cite{RFP2010}, there is a plethora of researches on deriving conditions under which the convex relaxation is able to recover the exact solution of the original nonconvex problem. Most of the proposed conditions are based on RIP, including \cite{CP2011,CSV2013,CZ2013,ZL2017,LZMV2020}. Another direction is to show that the convex relaxation is exact with high probability if the measurements are random and have sufficient number of samples \citep{CR2009,CT2010}. The major drawback of the convex relaxation approach is that semidefinite programs are expensive to solve for large-scale problems arising in machine learning.

An alternative approach for solving low-rank matrix recovery problems is based on local search methods, such as gradient descent algorithms \citep{RS2005,LRST2010,RR2013,GHJY2015,TBSS2016}, iterative hard thresholding \citep{RSS2017} and trust-region methods \citep{SQW2016,BAC2019}. They can be efficiently applied to large-scale problems, but the quality of the obtained solution depends on whether the objective has a benign landscape. As previously discussed in Section~\ref{sec:intro}, RIP-type conditions can be used to guarantee the absence of spurious local minima over the entire space. Under these conditions, any local search method that converges to a local minimum will be able to recover the globally optimal solution. The existing proof techniques for the analysis of spurious local minima can be roughly categorized into two groups: 1) checking whether an arbitrary matrix $X$ is a spurious local minimizer for a given measurement operator $\mathcal A$ and a given ground truth matrix $M^*$ \citep{BNS2016,GLM2016,GJZ2017,PKCS2017,ZWYG2018,LZT2019}; 2) checking whether a measurement operator $\mathcal A$ exists that makes a given point $X$ a spurious local solution for a given ground truth matrix $M^*$. The second proof technique was first proposed in \cite{ZSL2019} and later extended in \cite{MSL2020,ZZ2020} for structured operators. We adopt the same technique in the current paper.

In the case when the absence of spurious local minima in the entire space cannot be guaranteed, there are various approaches to handle the problem: 1) apply special initialization schemes such as spectral methods to find an initial point sufficiently close to the ground truth \citep{ZL2015,CLS2015,BKS2016,SL2016,PKCS2018}; 2) use randomized algorithms such as stochastic descent methods to escape saddle points or poor local minimizers \citep{GHJY2015}; or 3) initialize the algorithm randomly multiple times \citep{GS2018,ZZ2020}.

\section{Main Results}

To obtain a powerful condition for guaranteeing the absence of spurious local minima in problem \eqref{eq:nonlinearrec}, it is helpful to shed light on a distinguishing property of the function in \eqref{eq:linearobj} for linear measurements that does not hold in the general case: the Hessian matrices at all points are equal. If a general function $f$ satisfies $\delta$-$\RIP_{2r}$, \eqref{eq:nonlinearrip} intuitively states that the Hessian $\nabla^2f(M)$ should be close to the quadratic form defined by an identity matrix, at least when applied to rank-$2r$ matrices. Hence, $\nabla^2f(M)$ should change slowly when $M$ alters. The above discussion motivates the introduction of a new notion below.

\begin{definition}
A twice continuously differentiable function $f:\RR^{n \times n} \to \RR$ satisfies the \emph{bounded difference property} of rank $2r$ for a constant $\kappa \geq 0$, denoted as $\kappa$-$\BDP_{2r}$, if
\begin{equation}\label{eq:bdpineq}
\abs{[\nabla^2f(M)-\nabla^2f(M')](K,L)} \leq \kappa\norm{K}_F\norm{L}_F
\end{equation}
for all matrices $M,M',K,L \in \RR^{n \times n}$ whose ranks are at most $2r$.
\end{definition}

It turns out that the RIP and BDP properties are not fully independent. Their relationship is summarized in the following theorems that will be proved in Section~\ref{sec:props}.

\begin{theorem}\label{thm:rip2rbdp}
If the function $f$ satisfies $\delta$-$\RIP_{2r}$, then it also satisfies $4\delta$-$\BDP_{2r}$.
\end{theorem}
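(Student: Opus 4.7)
The plan is to turn the quadratic-form inequality supplied by $\RIP_{2r}$ into a bilinear-form inequality via polarization, while being careful about how rank interacts with addition. Write $D(K,L) := [\nabla^2 f(M) - \nabla^2 f(M')](K,L)$, which is a symmetric bilinear form in $(K,L)$. Applying the two-sided $\RIP_{2r}$ bound to $\nabla^2 f(M)$ and $\nabla^2 f(M')$ separately and invoking the triangle inequality immediately yields $\abs{D(K,K)} \leq 2\delta\norm{K}_F^2$ for every $K$ with $\rank(K) \leq 2r$.

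The natural next move is the polarization identity $D(K,L) = \tfrac14[D(K+L,K+L) - D(K-L,K-L)]$. The main obstacle is that for $K,L$ each of rank up to $2r$, the sum $K+L$ can have rank as large as $4r$, so the quadratic bound available from $\RIP_{2r}$ does not directly apply to $D(K\pm L, K\pm L)$. I would sidestep this by splitting: using a singular value decomposition, write $K = K_1 + K_2$ with $\rank(K_i) \leq r$ and mutually orthogonal row and column spaces, so that in particular $\norm{K}_F^2 = \norm{K_1}_F^2 + \norm{K_2}_F^2$, and similarly $L = L_1 + L_2$.

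Now for every pair $(K_i, L_j)$ with $i,j \in \{1,2\}$, both $K_i + L_j$ and $K_i - L_j$ have rank at most $2r$, so polarization combined with the quadratic bound gives $\abs{D(K_i,L_j)} \leq \delta(\norm{K_i}_F^2 + \norm{L_j}_F^2)$. Expanding $D(K,L)$ by bilinearity into the four cross terms and summing, the orthogonality of the pieces produces the intermediate estimate $\abs{D(K,L)} \leq 2\delta(\norm{K}_F^2 + \norm{L}_F^2)$.

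The final step is an AM--GM rescaling. Since $D$ is bilinear, $D(K,L) = D(tK, L/t)$ for any $t > 0$, and applying the intermediate estimate to the rescaled pair yields $\abs{D(K,L)} \leq 2\delta(t^2\norm{K}_F^2 + \norm{L}_F^2/t^2)$. Optimizing over $t$ at $t^2 = \norm{L}_F/\norm{K}_F$ (assuming both are nonzero; the degenerate cases are trivial) collapses this to $\abs{D(K,L)} \leq 4\delta\norm{K}_F\norm{L}_F$, which is exactly the $4\delta$-$\BDP_{2r}$ inequality \eqref{eq:bdpineq}.
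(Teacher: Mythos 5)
Your proof is correct and reaches the same constant, but it is assembled differently from the paper's. The paper first establishes a restricted-orthogonality inequality (Lemma~\ref{lem:rop} and its rank-$2r$ extension \eqref{eq:rop2r}): each Hessian is compared to the Euclidean inner product, $\abs{[\nabla^2f(M)](K,L)-\langle K,L\rangle}\leq 2\delta\norm{K}_F\norm{L}_F$, and the factor $4\delta$ then comes from a triangle inequality between the two Hessians, the $\langle K,L\rangle$ terms cancelling. You instead bound the difference form $D$ directly: its diagonal is controlled by $2\delta$ from the two-sided RIP sandwich, and the off-diagonal bound is recovered by polarization, the same SVD splitting into orthogonal rank-$r$ pieces (needed in both arguments to keep all ranks at $2r$), and a homogeneity/AM--GM rescaling that converts your sum-of-squares estimate into a product estimate (the paper gets the product form in one step via Cauchy--Schwarz on $(\norm{K_1}_F+\norm{K_2}_F)(\norm{L_1}_F+\norm{L_2}_F)$). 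The trade-off: the paper's detour through \eqref{eq:rop2r} produces a reusable restricted-orthogonality statement --- the matrix analogue of the square-root lifting inequality, which it also invokes to prove Theorem~\ref{thm:rip4rbdp} --- whereas your route is more self-contained, never references the identity form, and is manifestly symmetric in $M$ and $M'$. One point to make explicit when writing this up: your intermediate estimate $\abs{D(K,L)}\leq 2\delta(\norm{K}_F^2+\norm{L}_F^2)$ holds for all pairs of rank at most $2r$, and scaling preserves rank, so applying it to $(tK,L/t)$ is legitimate; with that noted, every step checks out.
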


\begin{theorem}\label{thm:rip4rbdp}
If the function $f$ satisfies $\delta$-$\RIP_{2r,4r}$, then it also satisfies $2\delta$-$\BDP_{2r}$.
\end{theorem}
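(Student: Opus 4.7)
My plan is to reduce the bound on $\nabla^2 f(M)-\nabla^2 f(M')$ to a bound on how much each individual Hessian deviates from the identity quadratic form, and then combine two such bounds with the triangle inequality. The key subtlety is that $\RIP_{2r,4r}$ gives control on rank-$4r$ matrices, and this slack of a factor two in rank is exactly what allows a polarization argument when $K,L$ have rank at most $2r$.

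Concretely, for any fixed $M$ with $\rank(M)\le 2r$, define the bilinear deviation
\[
E_M(K,L)=[\nabla^2 f(M)](K,L)-\langle K,L\rangle.
\]
The $\delta$-$\RIP_{2r,4r}$ hypothesis is equivalent to $|E_M(K,K)|\le \delta\norm{K}_F^2$ whenever $\rank(K)\le 4r$. Now for $K,L$ of rank at most $2r$, the matrices $K+L$ and $K-L$ have rank at most $4r$, so I can apply the polarization identity
\[
E_M(K,L)=\tfrac{1}{4}\bigl(E_M(K+L,K+L)-E_M(K-L,K-L)\bigr)
\]
and use the parallelogram identity $\norm{K+L}_F^2+\norm{K-L}_F^2=2\norm{K}_F^2+2\norm{L}_F^2$ to obtain
\[
\abs{E_M(K,L)}\le \tfrac{\delta}{2}\bigl(\norm{K}_F^2+\norm{L}_F^2\bigr).
\]

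The main obstacle is that the target bound in $\BDP_{2r}$ is $\norm{K}_F\norm{L}_F$, not the symmetric sum of squares. To bridge this gap I will use a homogeneity/rescaling trick: both $E_M(K,L)$ and the rank constraints are invariant under the substitution $K\mapsto \alpha K$, $L\mapsto \alpha^{-1} L$ for $\alpha>0$ (assuming $K,L\neq 0$; the zero case is trivial). Applying the previous inequality to the rescaled pair yields
\[
\abs{E_M(K,L)}\le \tfrac{\delta}{2}\bigl(\alpha^2\norm{K}_F^2+\alpha^{-2}\norm{L}_F^2\bigr),
\]
and minimizing over $\alpha>0$ (take $\alpha^2=\norm{L}_F/\norm{K}_F$) gives $\abs{E_M(K,L)}\le \delta\norm{K}_F\norm{L}_F$.

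The proof then concludes by noting that $[\nabla^2 f(M)-\nabla^2 f(M')](K,L)=E_M(K,L)-E_{M'}(K,L)$, since the $\langle K,L\rangle$ terms cancel. Applying the bound of the previous paragraph to each term and invoking the triangle inequality yields
\[
\bigl|[\nabla^2 f(M)-\nabla^2 f(M')](K,L)\bigr|\le 2\delta\norm{K}_F\norm{L}_F,
\]
which is exactly $2\delta$-$\BDP_{2r}$. The argument hinges entirely on the fact that $\RIP_{2r,4r}$ assumes the isometry inequality on matrices of rank up to $4r$, which is the minimum needed so that polarization of a bilinear form evaluated on rank-$2r$ matrices remains in the admissible class.
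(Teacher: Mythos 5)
Your proof is correct and follows essentially the same route as the paper: the paper's Lemma~\ref{lem:rop} (applied with $r$ replaced by $2r$) is exactly your polarization bound $\abs{E_M(K,L)}\le\delta\norm{K}_F\norm{L}_F$, obtained there by normalizing $\norm{K}_F=\norm{L}_F=1$ and differencing the RIP inequalities on $K\pm L$, which is equivalent to your rescaling-and-optimizing-over-$\alpha$ step. The conclusion via the triangle inequality on $E_M-E_{M'}$ is identical.
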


The bounds in the above two theorems are tight. In Section~\ref{sec:props}, we will construct a class of functions $f$ that satisfy the $\delta$-$\RIP_{2r}$ property but do not satisfy the $\kappa$-$\BDP_{2r}$ property for some $\kappa$ with $\kappa/\delta$ being arbitrarily close to 4. Similar examples can also be constructed for Theorem~\ref{thm:rip4rbdp}.

The main results of this paper will be stated below, which are powerful criteria for the global and local nonexistence of spurious local minima. The proofs are given in Section~\ref{sec:nospurious}.

\begin{theorem}[Global Guarantee for $r=1$]\label{thm:main}
When $r=1$, the problem \eqref{eq:nonlinearrec} has no spurious local minima if the function $f$ satisfies the $\delta$-$\RIP_2$ and $\kappa$-$\BDP_2$ properties for some constants $\delta$ and $\kappa$ such that
\[
\delta<\frac{2-6(1+\sqrt 2)\kappa}{4+6(1+\sqrt 2)\kappa}.
\]
\end{theorem}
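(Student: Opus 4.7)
The plan is to argue by contradiction. Assume that $X \in \RR^n$ is a spurious local minimizer of $h(X) := f(XX^T)$, so $XX^T \neq M^* = ZZ^T$ while the first-order condition $\nabla f(XX^T)X = 0$ and the second-order condition $[\nabla^2 h(X)](U,U) \geq 0$ for all $U$ both hold. Since $M^*$ is a global minimizer of $f$, one also has $\nabla f(M^*) = 0$. The goal is to show that these KKT constraints become incompatible with the $\delta$-$\RIP_2$ and $\kappa$-$\BDP_2$ hypotheses under the stated bound on $\delta$.

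First I would convert gradient information at $XX^T$ into Hessian information via BDP. Setting $\Delta := XX^T - M^*$ and $\mathcal{Q} := \nabla^2 f(XX^T)$, the fundamental theorem of calculus along the segment $M_s := M^* + s\Delta$ (which has rank $\leq 2$ for all $s \in [0,1]$) combined with $\kappa$-$\BDP_2$ gives, for every rank-$\leq 2$ matrix $K$,
\[
\langle \nabla f(XX^T), K\rangle = [\mathcal{Q}](\Delta, K) + E_K, \qquad \abs{E_K} \leq \kappa \norm{\Delta}_F \norm{K}_F.
\]
Specializing to $K \in \{XX^T, XZ^T, ZX^T\}$ and using that $\nabla f(XX^T) X = 0$ forces the left-hand side to vanish for each such $K$, one obtains three linear equations that pin down $[\mathcal{Q}](\Delta, \cdot)$ on these rank-$1$ matrices up to BDP error.

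Next I would insert a perturbation direction $U$ from the $2$-dimensional span of $X$ and $Z$ (parametrized as $U = Z - \alpha X$ for a scalar $\alpha$ to be optimized) into the second-order inequality
\[
[\mathcal{Q}](XU^T + UX^T,\, XU^T + UX^T) + 2\langle \nabla f(XX^T), UU^T\rangle \geq 0,
\]
rewrite the gradient term by the identity from the previous paragraph, and use the three equations above to eliminate $[\mathcal{Q}](\Delta, XX^T)$, $[\mathcal{Q}](\Delta, XZ^T)$, $[\mathcal{Q}](\Delta, ZX^T)$. The resulting inequality involves $\mathcal{Q}$ acting only on rank-$\leq 2$ matrices built from $X$, $Z$, and $U$. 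To replace $\mathcal{Q}$ by the Frobenius inner product I would decompose $\Delta = -X(Z-X)^T - (Z-X)X^T - (Z-X)(Z-X)^T$ and $D := XU^T + UX^T$ into sums of rank-$1$ pieces; for each resulting bilinear pair $(K, L)$ of rank-$1$ matrices, $K \pm L$ has rank $\leq 2$, so polarization together with $\delta$-$\RIP_2$ yields $\abs{[\mathcal{Q}](K,L) - \langle K,L\rangle} \leq \delta \norm{K}_F \norm{L}_F$. The one genuinely rank-$2$ quadratic term $[\mathcal{Q}](D, D)$ is handled by $\delta$-$\RIP_2$ directly.

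After these substitutions the inequality is purely scalar in $\norm{X}$, $\norm{Z}$, $X^T Z$, $\alpha$, $\delta$, and $\kappa$. Optimizing $\alpha$ — the stationarity condition is quadratic in $\alpha$, which is where the $\sqrt 2$ ultimately enters — and rearranging should produce a bound equivalent to the one stated in the theorem; in particular, taking $\kappa = 0$ must recover the rank-$1$ linear bound $\delta < 1/2$ discussed in Section~\ref{sec:intro}, which is a useful sanity check. The main obstacle is the bookkeeping in the previous paragraph: several bilinear terms each produce their own $\delta$- or $\kappa$-error with a specific sign, and the factor $6(1+\sqrt 2)$ emerges only after the optimal $\alpha$ is substituted into the aggregated error. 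A secondary subtlety is that $\RIP_2$ alone does not bound mixed inner products $[\mathcal{Q}](K, L)$ when both $K$ and $L$ have rank $2$, so the rank-$1$ decomposition of $\Delta$ and $D$ is essential.
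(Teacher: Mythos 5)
Your outline is the ``direct'' strategy: plug a test direction $U$ into the optimality conditions, decompose everything into rank-one pieces, and replace each bilinear evaluation $[\mathcal Q](K,L)$ by $\langle K,L\rangle$ up to a $\pm\delta\norm{K}_F\norm{L}_F$ error via polarization. This is exactly the proof technique that historically yields $\delta<1/5$ (the first category of arguments discussed in Section~\ref{sec:related}), and the step where you aggregate the worst-case sign of every individual RIP deviation by the triangle inequality is where the slack enters: the extremal signs of the several terms cannot in general be realized simultaneously by a single quadratic form satisfying $\delta$-$\RIP_2$, and bounding each term separately and summing provably does not reach the tight threshold. Your own sanity check would therefore fail: at $\kappa=0$ the theorem must reproduce $\delta<1/2$, which is known to be exactly tight, so \emph{every} inequality in a successful proof must be simultaneously tight at the worst-case pair ($x\perp z$, $\norm{x}/\norm{z}=1/2$); you give no mechanism ensuring this, and the final claim that ``rearranging should produce a bound equivalent to the one stated'' is precisely the step that is missing. (A smaller symptom: the factor $\sqrt 2$ in $6(1+\sqrt 2)\kappa$ does not come from optimizing your parameter $\alpha$; in the actual argument it arises from the bound $\norm{\mathbf X u}^2\ge 2\norm{x}^2\norm{u}^2$ applied to the BDP slack in the first-order condition.)

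The paper avoids this loss by taking the opposite, adversarial viewpoint (the second category in Section~\ref{sec:related}): Lemma~\ref{lem:necessary} encodes the first- and second-order conditions, with BDP-controlled slack $a,b$, as constraints on a single symmetric matrix $\mathbf H$ standing for the Hessian; Lemma~\ref{lem:relax} shows the resulting rank-constrained feasibility problem has the same value as its semidefinite relaxation \eqref{eq:ripsdp}; and Lemma~\ref{lem:deltamin} exhibits an explicit dual certificate whose value is computed exactly using the eigenvalue formula for $uv^T+vu^T$ (Lemma~\ref{lem:eigangle}), giving $\delta(x,z;\kappa)\ge(1-\eta_0-2(1+\sqrt2)\kappa)/(1+\eta_0+2(1+\sqrt2)\kappa)$ with $\max_{x,z}\eta_0=1/3$. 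It is this joint treatment of all bilinear evaluations of $\mathbf H$ as one PSD-sandwiched object, rather than term-by-term RIP bounds, that produces the stated constant. To salvage your approach you would either need to track the correlations among the RIP error terms (which effectively re-derives the SDP dual certificate) or accept a strictly weaker constant than the theorem claims.
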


\begin{theorem}[Local Guarantee for $r \geq 1$]\label{thm:local}
Assume that the function $f$ satisfies the $\delta$-$\RIP_{2r}$ property for some constant $\delta$ such that
\[
\delta<\sqrt{1-\frac{3+2\sqrt 2}{4}\epsilon^2}
\]
with $0<\epsilon \leq 2(\sqrt 2-1)$. Then, the problem \eqref{eq:nonlinearrec} has no spurious local minimizer $X$ that satisfies
\[
\norm{XX^T-M^*}_F \leq \epsilon\lambda_r(M^*).
\]
\end{theorem}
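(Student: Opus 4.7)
To prove Theorem~\ref{thm:local}, I would proceed by contradiction. Suppose that there exists a spurious local minimizer $X \in \RR^{n \times r}$ satisfying $\norm{XX^T - M^*}_F \leq \epsilon\,\lambda_r(M^*)$. Write $M^* = ZZ^T$, set $D := XX^T - M^*$, and let $R \in \RR^{r \times r}$ be the orthogonal matrix minimizing $\norm{X - ZR}_F$, whose optimality renders $X^T Z R$ symmetric positive semidefinite. Put $U := X - ZR$. A brief algebraic computation using the symmetry of $X^T ZR$ yields the key identity
\[
XU^T + UX^T = UU^T + D,
\]
in which the left-hand side (and hence $UU^T + D$) has rank at most $2r$. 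Moreover, a standard matrix perturbation lemma of the Tu--Boumal--Smith--S\`a type (essentially the one already used in the proof of Theorem~\ref{thm:prvlocalrankr}), valid under the bound $\norm{D}_F \leq 2(\sqrt 2 - 1)\lambda_r(M^*)$ that is guaranteed by the hypothesis on $\epsilon$, gives
\[
\norm{UU^T}_F \leq \norm{U}_F^2 \leq \frac{\norm{D}_F^2}{2(\sqrt 2 - 1)\,\lambda_r(M^*)} \leq \tfrac{\sqrt 2 + 1}{2}\,\epsilon\,\norm{D}_F,
\]
which is precisely the origin of the constant $(3+2\sqrt 2)/4 = \bigl((\sqrt 2+1)/2\bigr)^2$ in the theorem statement.

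Next I would exploit the optimality conditions at $X$. The first-order condition $\nabla f(XX^T) X = 0$ and symmetry of $\nabla f(XX^T)$ force $\langle \nabla f(XX^T),\, XU^T + UX^T \rangle = 0$; combined with the key identity this yields $\langle \nabla f(XX^T), UU^T\rangle = -\langle \nabla f(XX^T), D\rangle$. Since $M^*$ is a global minimizer of $f$ on $\RR^{n \times n}$ we have $\nabla f(M^*) = 0$, so the fundamental theorem of calculus together with the RIP lower bound (each matrix $M^* + tD$ has rank at most $2r$) gives
\[
\langle \nabla f(XX^T), D\rangle = \int_0^1 [\nabla^2 f(M^* + tD)](D, D)\,dt \geq (1-\delta)\,\norm{D}_F^2.
\]
Substituting these identities into the second-order condition $[\nabla^2 f(XX^T)](H,H) + 2\langle \nabla f(XX^T),UU^T\rangle \geq 0$, where $H := XU^T + UX^T = UU^T + D$, produces
\[
[\nabla^2 f(XX^T)](H,H) \geq 2(1-\delta)\,\norm{D}_F^2.
\]

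The final step is to convert this into a lower bound of the form $\norm{UU^T}_F^2 \geq (1-\delta^2)\,\norm{D}_F^2$: combined with the matrix perturbation estimate above, this would give $(1-\delta^2)\,\norm{D}_F^2 \leq \tfrac{3+2\sqrt 2}{4}\epsilon^2\,\norm{D}_F^2$, and dividing by $\norm{D}_F^2 > 0$ (spuriousness forces $D \neq 0$) produces $\delta^2 \geq 1 - (3+2\sqrt 2)\epsilon^2/4$, contradicting the hypothesis. The \emph{main obstacle} lies precisely in this final sharpening. The crude RIP upper bound $[\nabla^2 f(XX^T)](H,H) \leq (1+\delta)\norm{H}_F^2$ together with a triangle-inequality estimate of $\norm{UU^T + D}_F$ only yields the weaker constraint $\delta < 1/3$ (in the limit $\epsilon \to 0$), with no quadratic-in-$\epsilon$ improvement. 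To recover the tight factor $\sqrt{1-\delta^2}$, I must use a refined bilinear RIP estimate that controls the cross term $[\nabla^2 f(XX^T)](UU^T, D)$ individually, exploiting both the rank-$\leq r$ structure of $UU^T$ and the rank-$\leq 2r$ structure of $H = UU^T + D$; this is analogous to the strategy used by \cite{ZSL2019} and \cite{ZZ2020} in the linear case, adapted here so that the varying Hessian of the nonlinear $f$ is absorbed through the integral $\int_0^1 \nabla^2 f(M^* + tD)\,dt$ via pointwise RIP on rank-$\leq 2r$ slices.
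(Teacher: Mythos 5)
There is a genuine gap, and you name it yourself: the entire content of the theorem lies in the ``final sharpening'' that your proposal does not carry out. Up to that point your steps are sound (the identity $XU^T+UX^T=UU^T+D$, the use of the \cite{BNS2016} perturbation bound with the constant $2(\sqrt 2-1)$, the first- and second-order conditions, the integral representation of $\langle\nabla f(XX^T),D\rangle$), but as you concede, feeding them into the crude RIP bounds only yields a threshold of the form $\delta<1/3$ as $\epsilon\to 0$, and a ``refined bilinear RIP estimate'' on $[\nabla^2 f(XX^T)](UU^T,D)$ cannot rescue this: any bilinear bound of Lemma~\ref{lem:rop} type applied to your chain of inequalities produces at best a constraint like $(1-\delta)/(1+\delta)\lesssim \epsilon$, i.e.\ linear in $\epsilon$, never the claimed $\delta<\sqrt{1-\tfrac{3+2\sqrt2}{4}\epsilon^2}$ which tends to $1$ as $\epsilon\to 0$. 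The loss comes from the fact that you spread the optimality information across \emph{different} quadratic forms (the Hessian at $XX^T$ in the second-order condition, the averaged Hessian $\int_0^1\nabla^2f(M^*+tD)\,dt$ in the first-order condition) and then estimate each piece separately; without a BDP-type hypothesis (not available in Theorem~\ref{thm:local}) these forms cannot be recombined sharply.

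The paper's proof avoids this entirely and, notably, never uses the second-order condition. Applying the mean value theorem to $h(V)=\langle\nabla f(V),XU^T+UX^T\rangle$ turns the first-order condition into an \emph{exact} orthogonality $[\nabla^2 f(\xi)](XX^T-M^*,\,XU^T+UX^T)=0$ for a single intermediate point $\xi$ of rank at most $2r$ (Lemma~\ref{lem:necessaryfoc}), so the one quadratic form $\mathbf H=\nabla^2f(\xi)$ simultaneously satisfies $\delta$-$\RIP_{2r}$ and $\mathbf X^T\mathbf H\mathbf e=0$. One then asks how small $\delta$ can be for \emph{any} such $\mathbf H$ (the SDP \eqref{eq:ripsdpfoc} defining $\delta_f(X,Z)$, where the rank-restricted RIP constraint is relaxed to full spectral bounds by the Lemma~\ref{lem:relax}-type symmetry argument), and the dual certificate built from the rank-two matrix $(\mathbf Xy)\mathbf e^T+\mathbf e(\mathbf Xy)^T$ together with Lemma~\ref{lem:eigangle} shows $\delta\geq\cos\theta$, where $\theta$ is the angle between $\mathbf e$ and $\mathbf Xy$. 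The constant $\tfrac{3+2\sqrt2}{4}=\bigl(\tfrac{1}{2(\sqrt2-1)}\bigr)^2$ then enters exactly where you expected it to, but through the bound $\sin\theta\leq\norm{\mathbf e-\mathbf Xy}/\norm{\mathbf e}\leq\norm{X-Z}_F^2/\norm{XX^T-M^*}_F\leq\epsilon/(2(\sqrt2-1))$ (after normalizing $X$ by an orthogonal $R$ so that $X^TZ\succeq 0$, as in your setup), giving $\delta\geq\cos\theta\geq\sqrt{1-\tfrac{3+2\sqrt2}{4}\epsilon^2}$, which is the inequality your proposal targets (your $\norm{UU^T}_F^2\geq(1-\delta^2)\norm D_F^2$) but does not prove. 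To complete your argument you would essentially have to reproduce this single-Hessian orthogonality-plus-angle mechanism; the second-order route you sketch does not lead there.
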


In the case of linear measurements and the quadratic loss, the function $f$ satisfies the $\kappa$-$\BDP_{2r}$ property with $\kappa=0$. Hence, Theorem~\ref{thm:main} immediately recovers the result in \cite{ZSL2019} stating that the problem \eqref{eq:linearrec} with $r=1$ has no spurious local minima if the quadratic form $\mathcal Q$ satisfies the $\delta$-$\RIP_2$ property with $\delta<1/2$.

As a by-product, Theorem~\ref{thm:local} also improves the existing local guarantees summarized in Theorem~\ref{thm:prvlocalrank1} and Theorem~\ref{thm:prvlocalrankr} for certain linear cases. If $r=1$, Theorem~\ref{thm:local} can possibly offer a region free of spurious local minima that is larger than the region obtained from Theorem~\ref{thm:prvlocalrank1}. The reason is that $\epsilon$ in Theorem~\ref{thm:prvlocalrank1} is capped at $(\sqrt 5-1)/2$, which is increased to $2(\sqrt 2-1)$ in Theorem~\ref{thm:local} (note that $\lambda_r(M^*)=\norm{M^*}_F$ if $r=1$). For an arbitrary rank, Theorem~\ref{thm:local} strengthens the result of Theorem~\ref{thm:prvlocalrankr} in terms of the order of the bound as a function of $\epsilon$, when $\epsilon$ is small.

Theorem~\ref{thm:main} and Theorem~\ref{thm:local} are even more powerful for functions $f$ associated with nonlinear measurements. At the end of Section~\ref{sec:props} and in Section~\ref{sec:app}, we will offer such examples for which the absence of spurious local minima can be certified by Theorem~\ref{thm:main} or Theorem~\ref{thm:local}, while the existing conditions in the literature fail to work.

\section{RIP and BDP Properties}\label{sec:props}

In this section, the relationship among the $\RIP_{2r}$, $\RIP_{2r,4r}$ and $\BDP_{2r}$ properties of a given function $f$ will be investigated. We will first prove Theorem~\ref{thm:rip2rbdp} and Theorem~\ref{thm:rip4rbdp}, and then show that the bounds in these theorems are tight. The following lemma will be needed, which appears in \cite{Candes2008,BNS2016,LZT2019} under different notations. We include a short proof here for completeness.

\begin{lemma}\label{lem:rop}
If a quadratic form $\mathcal Q$ satisfies $\delta$-$\RIP_{2r}$,
then
\[
\abs{[\mathcal Q](K,L)-\langle K,L\rangle} \leq \delta\norm{K}_F\norm{L}_F
\]
for all matrices $K,L \in \RR^{n \times n}$ with $\rank(K) \leq r$, $\rank(L) \leq r$.
\end{lemma}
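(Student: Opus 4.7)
The plan is to reduce this bilinear inequality to the diagonal (quadratic) RIP hypothesis via a polarization argument, and then tighten the resulting constant by a homogeneity/normalization step.

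First I would introduce the auxiliary symmetric bilinear form $\mathcal R(K,L) := [\mathcal Q](K,L) - \langle K,L\rangle$ and rewrite the $\delta$-$\RIP_{2r}$ hypothesis in its equivalent diagonal form
\[
\abs{\mathcal R(K,K)} \le \delta\norm{K}_F^2 \quad \text{for all } K \text{ with } \rank(K) \le 2r.
\]
Then I would apply the polarization identity
\[
\mathcal R(K,L) = \tfrac{1}{4}\bigl[\mathcal R(K+L,K+L) - \mathcal R(K-L,K-L)\bigr].
\]
Since $\rank(K)\le r$ and $\rank(L)\le r$ imply $\rank(K\pm L) \le 2r$, the diagonal bound applies to each term on the right. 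Combined with the parallelogram identity $\norm{K+L}_F^2 + \norm{K-L}_F^2 = 2\norm{K}_F^2 + 2\norm{L}_F^2$, this yields the preliminary estimate
\[
\abs{\mathcal R(K,L)} \le \tfrac{\delta}{2}\bigl(\norm{K}_F^2 + \norm{L}_F^2\bigr).
\]

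The main obstacle is that this preliminary bound has the arithmetic mean of the squared norms, whereas the lemma asks for the product $\norm{K}_F\norm{L}_F$ (which is smaller in general, by AM-GM). To close this gap I would exploit the bilinearity of $\mathcal R$: assuming $K,L$ are both nonzero (otherwise the claim is trivial), I would apply the preliminary estimate to the rescaled matrices $\tilde K := K/\norm{K}_F$ and $\tilde L := L/\norm{L}_F$, which also have ranks at most $r$. This yields $\abs{\mathcal R(\tilde K,\tilde L)} \le \delta$, and multiplying through by $\norm{K}_F\norm{L}_F$ gives the advertised bound. Equivalently, one can replace $L$ by $tL$ with $t>0$ and optimize the resulting inequality over $t$; the minimum is attained at $t = \norm{K}_F/\norm{L}_F$ and gives the same sharp constant.

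No deeper machinery is needed: the whole argument is polarization followed by rescaling, and both rank-doublings and the constant $\delta$ (as opposed to, say, $\delta/2$ or $2\delta$) fall out cleanly. The only thing to be vigilant about is verifying that $K\pm L$ has rank at most $2r$ and that the scaling step is compatible with the rank constraint, both of which hold trivially.
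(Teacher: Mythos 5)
Your proposal is correct and is essentially the paper's own argument: both proofs apply the $\delta$-$\RIP_{2r}$ bound to $K+L$ and $K-L$ (which have rank at most $2r$) and combine the two inequalities, with a normalization step turning the resulting bound into the product $\norm{K}_F\norm{L}_F$; the paper simply normalizes $\norm{K}_F=\norm{L}_F=1$ first and polarizes afterward, whereas you polarize first and rescale at the end.
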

\begin{proof}
Without loss of generality, assume that $\norm{K}_F=\norm{L}_F=1$. By the $\delta$-$\RIP_{2r}$ property of $\mathcal Q$, we have
\begin{align*}
(1-\delta)\norm{K-L}_F^2 &\leq [\mathcal Q](K-L,K-L) \leq (1+\delta)\norm{K-L}_F^2, \\
(1-\delta)\norm{K+L}_F^2 &\leq [\mathcal Q](K+L,K+L) \leq (1+\delta)\norm{K+L}_F^2.
\end{align*}
Taking the difference between the above two inequalities, one can obtain
\begin{align*}
4[\mathcal Q](K,L) &\leq (1+\delta)\norm{K+L}_F^2-(1-\delta)\norm{K-L}_F^2=4\delta+4\langle K,L \rangle, \\
-4[\mathcal Q](K,L) &\leq (1+\delta)\norm{K-L}_F^2-(1-\delta)\norm{K+L}_F^2=4\delta-4\langle K,L \rangle,
\end{align*}
which proves the desired inequality.
\end{proof}

\par\noindent{\bf Proof of Theorem~\ref{thm:rip4rbdp}\ }
Let $M$ and $M'$ be two matrices of rank at most $2r$. By the definition of $\delta$-$\RIP_{2r,4r}$ of the function $f$, both $\nabla^2f(M)$ and $\nabla^2f(M')$ satisfy $\delta$-$\RIP_{4r}$. After the constant $r$ in the statement of Lemma~\ref{lem:rop} is replaced by $2r$, we obtain
\begin{gather*}
\abs{[\nabla^2f(M)](K,L)-\langle K,L\rangle} \leq \delta\norm{K}_F\norm{L}_F, \\
\abs{[\nabla^2f(M')](K,L)-\langle K,L\rangle} \leq \delta\norm{K}_F\norm{L}_F,
\end{gather*}
for all matrices $K,L \in \RR^{n \times n}$ of rank at most $2r$, which leads to \eqref{eq:bdpineq} for $\kappa=2\delta$.
\hfill\BlackBox\\[2mm]
\par\noindent{\bf Proof of Theorem~\ref{thm:rip2rbdp}\ }
We first prove that any quadratic form $\mathcal Q$ with $\delta$-$\RIP_{2r}$ satisfies
\begin{equation}\label{eq:rop2r}
\abs{[\mathcal Q](K,L)-\langle K,L\rangle} \leq 2\delta\norm{K}_F\norm{L}_F,
\end{equation}
for all matrices $K,L \in \RR^{n \times n}$ of rank at most $2r$. Let $K=UDV^T$ be the singular value decomposition of $K$. Write $D=D_1+D_2$ in which $D_1$ and $D_2$ both have at most $r$ nonzero entries, and let $K_1=UD_1V^T$ and $K_2=UD_2V^T$. Then, $K=K_1+K_2$, where $\rank(K_1) \leq r$, $\rank(K_2) \leq r$ and $\langle K_1,K_2 \rangle=0$. We decompose $L=L_1+L_2$ similarly. By Lemma~\ref{lem:rop}, it holds that
\begin{align*}
\abs{[\mathcal Q](K,L)-\langle K,L \rangle} &\leq \abs{[\mathcal Q](K_1,L_1)-\langle K_1,L_1 \rangle}+\abs{[\mathcal Q](K_1,L_2)-\langle K_1,L_2 \rangle} \\
&+\abs{[\mathcal Q](K_2,L_1)-\langle K_2,L_1 \rangle}+\abs{[\mathcal Q](K_2,L_2)-\langle K_2,L_2 \rangle} \\
&\leq \delta(\norm{K_1}_F+\norm{K_2}_F)(\norm{L_1}_F+\norm{L_2}_F) \\
&\leq 2\delta\sqrt{\norm{K_1}_F^2+\norm{K_2}_F^2}\sqrt{\norm{L_1}_F^2+\norm{L_2}_F^2} \\
&=2\delta\norm{K}_F\norm{L}_F.
\end{align*}
The remaining proof is exactly the same as the proof of Theorem~\ref{thm:rip4rbdp}.
\hfill\BlackBox\\[2mm]

The inequality \eqref{eq:rop2r} is parallel to the \emph{square root lifting inequality} \citep{CWX2009} in the compressed sensing problem. Our result can be regarded as a generalization of that result to the low-rank matrix recovery problem.

In what follows, we will show that the bounds in Theorem~\ref{thm:rip2rbdp} and Theorem~\ref{thm:rip4rbdp} are tight. To this end, we will work on examples of function $f$ with $\delta$-$\RIP_{2r}$ or $\delta$-$\RIP_{4r}$ for a small $\delta$ whose Hessian has a large variation across different points. Consider an integer $n \geq 4$ and an integer $r \geq 1$. Let
\[
A_1=\frac{1}{\sqrt n}\diag(a_1,\dots,a_n)
\]
with $a_i \in \{-1,1\}$ whose exact value will be determined later. One can extend $A_1$ to an orthonormal basis $A_1,\dots,A_{n^2}$ of the space $\RR^{n \times n}$. Define a linear operator $\mathcal A: \RR^{n \times n} \to \RR^{n^2-1}$ by letting
\[
\mathcal A(M)=(\langle A_2,M \rangle, \dots, \langle A_{n^2},M \rangle).
\]
Then, for every matrix $M \in \RR^{n \times n}$, it holds that
\[
\norm{\mathcal A(M)}^2=\norm{M}_F^2-(\langle A_1,M \rangle)^2 \leq \norm{M}_F^2.
\]
Now, assume that $M$ is a matrix with $\rank(M) \leq 2r$, and let $\sigma_1(M),\dots,\sigma_{2r}(M)$ denote its $2r$ largest singular values. Observe that
\begin{equation*}
\abs{\langle A_1,M \rangle} \leq \frac{1}{\sqrt n}\sum_{i=1}^n\abs{M_{ii}} \leq \frac{1}{\sqrt n}\sum_{i=1}^{2r}\sigma_i(M)=\sqrt\frac{2r}{n}\sqrt{\sum_{i=1}^{2r}\sigma_i^2(M)}=\sqrt\frac{2r}{n}\norm{M}_F,
\end{equation*}
which implies that
\[
\norm{\mathcal A(M)}^2=\norm{M}_F^2-(\langle A_1,M \rangle)^2 \geq \left(1-\frac{2r}{n}\right)\norm{M}_F^2.
\]
Define a scaled linear operator $\bar{\mathcal A}$ as
\[
\bar{\mathcal A}(M)=\sqrt\frac{n}{n-r}\mathcal A(M), \quad \forall M \in \RR^{n \times n}.
\]
Thus, the relation
\begin{equation}\label{eq:exmprip}
\left(1-\frac{r}{n-r}\right)\norm{M}_F^2 \leq \norm{\bar{\mathcal A}(M)}^2 \leq \left(1+\frac{r}{n-r}\right)\norm{M}_F^2
\end{equation}
holds for all $M \in \RR^{n \times n}$ with $\rank(M) \leq 2r$.

After choosing $A_1=(1/\sqrt n)I_n$ in the above argument, let $\mathcal A$ be the resulting linear operator and $\mathcal Q$ be the quadratic form in \eqref{eq:linearquad} that corresponds to the scaled linear operator $\bar{\mathcal A}$. By the same argument, a similar linear operator $\mathcal A'$ and the corresponding quadratic form $\mathcal Q'$ can be obtained after choosing
\begin{equation}\label{eq:a1prime}
A'_1=\frac{1}{\sqrt n}\diag(1,1,-1,-1,1,\dots,1).
\end{equation}
Now, we select $K=\diag(1,1,0,0,0,\dots,0)$ and $L=\diag(0,0,1,1,0,\dots,0)$. Then,
\begin{equation}\label{eq:exmpbdp}
\begin{aligned}
\abs{[\mathcal Q-\mathcal Q'](K,L)}&=\frac{n}{n-r}\abs{\langle \mathcal A(K),\mathcal A(L)\rangle-\langle \mathcal A'(K),\mathcal A'(L)\rangle} \\
&=\frac{n}{n-r}\abs{-\langle A_1,K\rangle\langle A_1,L\rangle+\langle A'_1,K\rangle\langle A'_1,L\rangle} \\
&=\frac{4}{n-r}\norm{K}_F\norm{L}_F.
\end{aligned}
\end{equation}
In the case $r=1$, it follows from \eqref{eq:exmprip} that both of the constructed quadratic forms $\mathcal Q$ and $\mathcal Q'$ satisfy $\delta$-$\RIP_2$ with $\delta=1/(n-1)$. If one can find a twice continuously differentiable function $f$ satisfying $\delta$-$\RIP_2$ such that
\[
\nabla^2f(M)=\mathcal Q, \quad \nabla^2f(M')=\mathcal Q'
\]
hold at two particular points $M,M' \in \RR^{n \times n}$ with $\rank(M) \leq 2$ and $\rank(M') \leq 2$, then by \eqref{eq:exmpbdp} the function $f$ cannot satisfy $\kappa$-$\BDP_2$ for $\kappa<4\delta$. Since the design of such function is cumbersome, we will use a weaker result that serves the same purpose. This result, to be formalized in Lemma~\ref{lem:extension}, states that for every $\mu>0$, one can find a twice continuously differentiable function $f$ with $(\delta+\mu)$-$\RIP_2$ and two matrices $M,M' \in \RR^{n \times n}$ of rank at most $1$ satisfying the following inequalities:
\begin{equation}\label{eq:approxhessian}
\begin{gathered}
\abs{[\nabla^2f(M)-\mathcal Q](K,L)} \leq \mu\norm{K}_F\norm{L}_F, \\
\abs{[\nabla^2f(M')-\mathcal Q'](K,L)} \leq \mu\norm{K}_F\norm{L}_F.
\end{gathered}
\end{equation}
Combining \eqref{eq:exmpbdp} and \eqref{eq:approxhessian} yields that
\[
\abs{[\nabla^2f(M)-\nabla^2f(M')](K,L)} \leq (4\delta+2\mu)\norm{K}_F\norm{L}_F.
\]
Therefore, the function $f$ cannot satisfy the $\kappa$-$\BDP_2$ property for any $\kappa<4\delta+2\mu$. Since $\mu$ can be made arbitrarily small, this shows that the constant $4\delta$ in Theorem~\ref{thm:rip2rbdp} cannot be improved. Similarly, by choosing $r=2$ instead of $r=1$ and repeating the above argument, one can show that the constant $2\delta$ in Theorem~\ref{thm:rip4rbdp} cannot be improved either.

\begin{lemma}\label{lem:extension}
Consider two quadratic forms $\mathcal Q$ and $\mathcal Q'$ satisfying the $\delta$-$\RIP_{2r}$ property. For every $\mu>0$, there exists a twice continuously differentiable function $f: \RR^{n \times n} \to \RR$ and two matrices $M,M' \in \RR^{n \times n}$ with $\rank(M) \leq 1$ and $\rank(M') \leq 1$ such that $f$ satisfies the $(\delta+\mu)$-$\RIP_{2r}$ property and that \eqref{eq:approxhessian} holds for all $K,L \in \RR^{n \times n}$.
\end{lemma}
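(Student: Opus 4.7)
The plan is to construct $f$ by adding a localized Hessian-correcting term to the basic quadratic $g_{\mathcal Q}(X) = \tfrac{1}{2}[\mathcal Q](X,X)$ (which has constant Hessian $\mathcal Q$), concentrated in a small neighborhood of $M'$ so that the Hessian ``switches'' from $\mathcal Q$ to $\mathcal Q'$ near $M'$ while remaining close to $\mathcal Q$ elsewhere. Pick any two rank-$\leq 1$ matrices $M$ and $M'$ with $M\neq M'$ (for concreteness, $M=0$ and $M'=e_1e_1^T$). Let $\rho:\RR^{n\times n}\to[0,1]$ be a smooth bump function with $\rho=1$ in a small neighborhood of $M'$ and $\rho=0$ outside a Frobenius-ball of radius $r$ around $M'$ that excludes $M$. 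Set $q(Y)=\tfrac{1}{2}[\mathcal Q'-\mathcal Q](Y,Y)$ and define
\[
f(X)=\tfrac{1}{2}[\mathcal Q](X,X)+\rho(X)\,q(X-M').
\]

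The verification of the Hessian identities at $M$ and $M'$ is the easy part. Since $q(0)=0$ and $\nabla q(0)=0$, while $\rho(M')=1$ with derivatives of $\rho$ vanishing at $M'$, the product rule gives $\nabla^2 f(M')=\mathcal Q+0+0+1\cdot(\mathcal Q'-\mathcal Q)=\mathcal Q'$ exactly; similarly, because $\rho$ and all its derivatives vanish at $M$, we have $\nabla^2 f(M)=\mathcal Q$ exactly. Hence the inequalities in \eqref{eq:approxhessian} hold with equality. Expanding the Hessian at a general point,
\[
\nabla^2 f(X)=(1-\rho(X))\mathcal Q+\rho(X)\mathcal Q'+E(X),
\]
where the error $E(X)=q(X-M')\nabla^2\rho(X)+2\,\nabla\rho(X)\otimes_{\text{sym}}\nabla q(X-M')$ collects all cross terms from the product rule. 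The first two terms form a convex combination of $\mathcal Q$ and $\mathcal Q'$, both of which satisfy $\delta$-$\RIP_{2r}$, so this piece is automatically $\delta$-$\RIP_{2r}$.

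The main content of the proof is therefore to show that for rank-$\leq 2r$ matrices $X$ and $K$, one can bound $|[E(X)](K,K)|\leq \mu\|K\|_F^2$. In the support of $\rho$, $\|X-M'\|_F\leq r$, so using Lemma~\ref{lem:rop} together with the polarization estimate \eqref{eq:rop2r} from the proof of Theorem~\ref{thm:rip2rbdp}, one obtains $|q(X-M')|\lesssim \delta r^2$ and $|\langle\nabla q(X-M'),K\rangle|\lesssim\delta r\|K\|_F$, while a smooth bump satisfies $|\nabla\rho|\lesssim 1/r$ and $|[\nabla^2\rho](K,K)|\lesssim \|K\|_F^2/r^2$. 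Plugging in gives $|[E(X)](K,K)|\leq C\delta\|K\|_F^2$ for a fixed universal constant $C$.

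The hard part—and the main obstacle I foresee—is sharpening this to an arbitrarily small $\mu$, because the naive estimate above is independent of $r$. I would overcome this by choosing $\rho$ with additional structure so that its derivatives are contracted against rank-$\leq 2r$ matrices $K$ with extra smallness. Concretely, take $\rho(X)=\tilde\rho(\langle X-M',w\rangle)$ as a function of a single scalar projection, where $w$ is a unit matrix chosen so that $\langle w,K\rangle^2/\|K\|_F^2$ is small for every rank-$\leq 2r$ matrix $K$ (for instance, $w=I_n/\sqrt n$ gives $\langle w,K\rangle^2\leq(2r/n)\|K\|_F^2$ via the bound $|\operatorname{tr}(K)|\leq\sqrt{2r}\|K\|_F$). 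Then $|\langle\nabla\rho,K\rangle|$ and $|[\nabla^2\rho](K,K)|$ each pick up the small factor $\langle w,K\rangle^2$, shrinking the bound on $|[E(X)](K,K)|$ by a factor depending on $n$ and $r$; combined with a Frobenius-norm cutoff and an appropriate scaling $r=r(\mu)$, the error can be driven below $\mu$. Pushing this analysis through, together with a completeness check that the same choice of $\rho$ still yields $\nabla^2 f(M)=\mathcal Q$ and $\nabla^2 f(M')=\mathcal Q'$, completes the construction and gives the $(\delta+\mu)$-$\RIP_{2r}$ property.
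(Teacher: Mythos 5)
Your construction runs into a genuine gap exactly at the point you flag, and the proposed remedy does not close it. The localized-bump error $E(X)=q(X-M')\nabla^2\rho(X)+2\,\nabla\rho(X)\otimes_{\mathrm{sym}}\nabla q(X-M')$ is of size $O(C)$ independent of the bump radius (the $r^2$ smallness of $q$ cancels the $1/r^2$ growth of $\nabla^2\rho$, and likewise for the gradient cross term), so shrinking the bump gains nothing. Your fix, taking $\rho$ to depend on the scalar $\langle X-M',w\rangle$ with $w=I_n/\sqrt n$, only buys a \emph{fixed} factor: $\langle w,K\rangle^2\le(2r/n)\norm{K}_F^2$ with $n$ and $r$ given in the lemma, so the error is reduced by a constant that cannot be driven below an arbitrary $\mu>0$ (in the paper's application $n=4$, $r=1$, so the factor is $1/2$). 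Moreover the cross term $2\langle\nabla\rho,K\rangle\langle\nabla q(X-M'),K\rangle$ picks up only one factor of $\langle w,K\rangle$, not two, and since a scalar-projection $\rho$ is supported on a slab rather than a ball, the Frobenius-norm cutoff you would need to add reintroduces $O(1)$ cross terms with no smallness factor at all (cutoff gradient $\sim 1/r$ against $\nabla q\sim Cr$ at the cutoff boundary). A smaller inaccuracy: the bound $\abs{q(X-M')}\lesssim\delta r^2$ via Lemma~\ref{lem:rop} needs $X-M'$ to have rank at most $2r$, whereas it has rank up to $2r+1$; this is repairable by splitting, but it does not rescue the main estimate.

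The deeper issue is your decision to fix $M=0$ and $M'=e_1e_1^T$ at the outset. The lemma lets you \emph{choose} $M$ and $M'$, and the paper's proof exploits this freedom essentially: it sets
\[
f(V)=\tfrac12[\mathcal Q'](V,V)+\tfrac12 H(\norm{V}_F^2)[\Delta](V,V),\qquad H(t)=\exp(-1/t^\gamma)\ \text{for }t>0,
\]
so that the transition happens over an unboundedly long range, with $\abs{tH'(t)}$ and $\abs{t^2H''(t)}$ uniformly $O(\gamma)$. The cross terms in $\nabla^2 f(V)$ are then uniformly bounded by $O(\gamma C)\norm{K}_F\norm{L}_F$ and can be made $\le\mu/2$ by choosing $\gamma$ small, while the leading part $\mathcal Q'+H(\norm{V}_F^2)\Delta$ is a convex combination of $\mathcal Q$ and $\mathcal Q'$ and hence $\delta$-$\RIP_{2r}$. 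Taking $M'=0$ gives $\nabla^2f(M')=\mathcal Q'$ exactly (since $H'(0)=H''(0)=0$), and taking $M=\diag(s,0,\dots,0)$ with $s$ large makes $H(\norm{M}_F^2)$ arbitrarily close to $1$, so $\nabla^2f(M)$ is within $\mu$ of $\mathcal Q$. With $M$ and $M'$ pinned at unit distance, no analogue of this slow-transition mechanism is available, and your error terms stay bounded away from zero; to complete the proof along your general lines you would have to abandon the fixed choice of $M$ and let it escape to infinity, at which point you are essentially reconstructing the paper's argument.
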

\begin{proof}
Given $\mu>0$, let $f$ be given as
\[
f(V)=\frac{1}{2}[\mathcal Q'](V,V)+\frac{1}{2}H(\norm{V}_F^2)[\Delta](V,V),
\]
where $\Delta=\mathcal Q-\mathcal Q'$ and $H: \RR \to \RR$ is defined as
\[
H(t)=\begin{dcases*}
0, & if $t \leq 0$, \\
\exp(-1/t^\gamma), & if $t>0$.
\end{dcases*}
\]
Here, $\gamma \in (0,1)$ is a constant that will be determined later. It is straightforward to verify that $H$ is twice continuously differentiable and
\begin{subequations}
\begin{gather}
H'(0)=H''(0)=0, \label{eq:hzero} \\
\abs{tH'(t)} \leq \frac{\gamma}{\ee}, \quad \abs{t^2H''(t)} \leq \frac{4\gamma}{\ee}, \quad \forall t \in \RR. \label{eq:hderbound}
\end{gather}
\end{subequations}

The basic idea behind the above construction of $f$ is that when $\gamma$ is chosen to be small, the growth of the function $H$ becomes so slow that it can be regarded as a constant when computing the Hessian of the above function $f$. As a result, the Hessian is approximately a linear combination of two quadratic forms $\mathcal Q$ and $\mathcal Q'$ with the $\delta$-$\RIP_{2r}$ property. Formally, the Hessian $\nabla^2f(V)$ of $f$ at a particular matrix $V \in \RR^{n \times n}$, when applied to arbitrary $K,L \in \RR^{n \times n}$, is given by
\begin{equation}\label{eq:fhessian}
\begin{aligned}
[\nabla^2f(V)](K,L)&=2H''(\norm{V}_F^2)[\Delta](V,V)\langle V,K\rangle\langle V,L\rangle+H'(\norm{V}_F^2)[\Delta](V,V)\langle K,L\rangle \\
&+2H'(\norm{V}_F^2)([\Delta](L,V)\langle V,K\rangle+[\Delta](K,V)\langle V,L\rangle) \\
&+[\mathcal Q'+H(\norm{V}_F^2)\Delta](K,L).
\end{aligned}
\end{equation}
By compactness, there exists a constant $C>0$ such that
\begin{equation}\label{eq:deltabound}
\abs{[\Delta](A,B)} \leq C\norm{A}_F\norm{B}_F
\end{equation}
holds for all $A,B \in \RR^{n \times n}$. We choose a sufficiently small $\gamma$ such that $26\gamma C/\ee \leq \mu$. By \eqref{eq:hderbound}, \eqref{eq:fhessian}, \eqref{eq:deltabound} and the Cauchy-Schwartz inequality, we have
\begin{equation}\label{eq:hessianbound}
\abs{[\nabla^2f(V)-\mathcal Q'-H(\norm{V}_F^2)\Delta](K,L)} \leq \frac{13\gamma C}{\ee}\norm{K}_F\norm{L}_F \leq \frac{\mu}{2}\norm{K}_F\norm{L}_F.
\end{equation}

To prove that the function $f$ satisfies $(\delta+\mu)$-$\RIP_{2r}$, assume for now that $K=L$ and $\rank(K) \leq 2r$.
The inequality $0 \leq H(\norm{V}_F^2) \leq 1$ and the $\delta$-$\RIP_{2r}$ property of $\mathcal Q$ and $\mathcal Q'$ imply that
\begin{equation*}
(1-\delta)\norm{K}_F^2 \leq [\mathcal Q'+H(\norm{V}_F^2)\Delta](K,K) \leq (1+\delta)\norm{K}_F^2.
\end{equation*}
By \eqref{eq:hessianbound} and the above inequality, the function $f$ satisfies the $(\delta+\mu)$-$\RIP_{2r}$ property. To prove the existence of $M$ and $M'$ satisfying \eqref{eq:approxhessian}, we select $M'=0$ and
\[
M=\diag(s,0,\dots,0).
\]
For any $K,L \in \RR^{n \times n}$, it follows from \eqref{eq:hzero} and \eqref{eq:fhessian} that
\begin{equation}\label{eq:exacthessian}
[\nabla^2f(M')-\mathcal Q'](K,L)=0.
\end{equation}
Moreover, \eqref{eq:deltabound} and \eqref{eq:hessianbound} yield that
\begin{align*}
\abs{[\nabla^2f(M)-\mathcal Q](K,L)} &\leq \frac{\mu}{2}\norm{K}_F\norm{L}_F+\abs{[\mathcal Q'+H(\norm{M}_F^2)\Delta-\mathcal Q](K,L)} \\
&\leq \left(\frac{\mu}{2}+(1-H(\norm{M}_F^2))C\right)\norm{K}_F\norm{L}_F.
\end{align*}
Since $H(\norm{M}_F^2) \to 1$ as $s \to +\infty$, \eqref{eq:approxhessian} is satisfied as long as $s$ is sufficiently large.
\end{proof}

The above argument also provides examples of the function $f$ whose corresponding recovery problem \eqref{eq:nonlinearrec} can be certified to have no spurious local minima via Theorem~\ref{thm:main}, while the existing results in the literature fail to do so. Following the above construction, choose $n=4$, $r=1$, and let
\[
\tilde f(V)=\frac{1-\lambda}{2}[\mathcal Q'](V,V)+\lambda f(V),
\]
for some $\lambda \in [0,1]$. The Hessian can be written as
\begin{equation}\label{eq:tildefhessian}
\nabla^2\tilde f(V)=(1-\lambda)\mathcal Q'+\lambda\nabla^2f(V).
\end{equation}
If $\lambda>0$, the Hessian of $\tilde f$ is not a constant, and therefore the condition in \cite{ZSL2019} cannot be applied. On the other hand, it follows from \eqref{eq:exacthessian} that
\[
[\nabla^2\tilde f(0)](A_1',A_1')=[\mathcal Q'](A_1',A_1')=0,
\]
for the matrix $A_1'$ of rank $4$ defined in \eqref{eq:a1prime}. Thus, the function $\tilde f$ cannot satisfy the $\delta$-$\RIP_{2,4}$ property for any $\delta \in [0,1)$. This implies that the condition in \cite{LZT2019} cannot be applied either. In contrast, note that the quadratic form $\mathcal Q'$ satisfies the $1/3$-$\RIP_2$ property and the function $f$ satisfies the $(1/3+\mu)$-$\RIP_2$ property. Therefore, it can be concluded from \eqref{eq:tildefhessian} that the function $\tilde f$ also satisfies the $(1/3+\mu)$-$\RIP_2$ property. In light of Theorem~\ref{thm:rip2rbdp}, $f$ satisfies $4(1/3+\mu)$-$\BDP_2$ and thus $f'$ satisfies $4\lambda(1/3+\mu)$-$\BDP_2$. Hence, Theorem~\ref{thm:main} certifies the absence of spurious local minima as long as $\lambda$ and $\mu$ jointly satisfy
\[
\frac{1}{3}+\mu<\frac{2-6(1+\sqrt 2)4\lambda(1/3+\mu)}{4+6(1+\sqrt 2)4\lambda(1/3+\mu)}.
\]

\section{Proofs of Main Results}\label{sec:nospurious}

Our approach consists of two major steps. The first step is to find necessary conditions that the function $f$ with the $\delta$-$\RIP_{2r}$ and $\kappa$-$\BDP_{2r}$ properties must satisfy if the corresponding problem \eqref{eq:nonlinearrec} has a local minimizer $X$ such that $XX^T \neq M^*$, where $M^*$ is the ground truth. The second step is to develop certain conditions on $\delta$ and $\kappa$ that rule out the satisfaction of the above necessary condition.

Before proceeding with the proofs, we need to introduce some notations. Given two matrices $X,Z \in \RR^{n \times r}$, define
\[
\mathbf e=\vect(XX^T-ZZ^T) \in \RR^{n^2},
\]
and let $\mathbf X \in \RR^{n^2 \times nr}$ be the matrix satisfying
\[
\mathbf X\vect U=\vect(XU^T+UX^T), \quad \forall U \in \RR^{n \times r}.
\]

\subsection{Necessary Conditions for the Existence of Spurious Local Minima}

As the first step, in the following lemma we obtain necessary conditions for the existence of spurious local minima in the problem \eqref{eq:nonlinearrec}.

\begin{lemma}\label{lem:necessary}
Assume that the function $f$ in the problem \eqref{eq:nonlinearrec} satisfies the $\delta$-$\RIP_{2r}$ and $\kappa$-$\BDP_{2r}$ properties. If $X$ is a local minimizer of \eqref{eq:nonlinearrec} and $Z$ is a global minimizer of \eqref{eq:nonlinearrec} with $M^*=ZZ^T$, then there exists a symmetric matrix $\mathbf H \in \RR^{n^2 \times n^2}$ such that the following three conditions hold:
\begin{enumerate}
\item $\norm{\mathbf X^T\mathbf H\mathbf e} \leq 2\kappa\sqrt{\lambda_1(XX^T)}\norm{\mathbf e}$;
\item $2I_r \otimes \mat(\mathbf H\mathbf e)+\mathbf X^T\mathbf H\mathbf X \succeq -2\kappa\norm{\mathbf e}I_{nr}$;
\item $\mathbf H$ satisfies the $\delta$-$\RIP_{2r}$ property, i.e, for every matrix $U \in \RR^{n \times n}$ with $\rank(U) \leq 2r$, it holds that
\[
(1-\delta)\norm{\mathbf U}^2 \leq \mathbf U^T\mathbf H\mathbf U \leq (1+\delta)\norm{\mathbf U}^2,
\]
where $\mathbf U=\vect U$.
\end{enumerate}
\end{lemma}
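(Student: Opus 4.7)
The plan is to take $\mathbf H := \nabla^2 f(XX^T)$. Since $\rank(XX^T) \leq r \leq 2r$ and $f$ satisfies $\delta$-$\RIP_{2r}$, this choice immediately yields property (3). The task then reduces to rewriting the first- and second-order necessary optimality conditions at $X$ in the form (1)--(2), where $\mathbf H \mathbf e$ replaces the actual gradient $\mathbf g := \vect(\nabla f(XX^T))$.

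Differentiating $g(X) := f(XX^T)$ twice along a direction $U$ via the expansion $(X+tU)(X+tU)^T = XX^T + t(XU^T + UX^T) + t^2 UU^T$ and vectorizing using the definitions of $\mathbf X$ and $\mat(\cdot)$, I obtain the first-order condition $\mathbf X^T \mathbf g = 0$ and the second-order condition
\[
\mathbf X^T \mathbf H \mathbf X + 2\, I_r \otimes \mat(\mathbf g) \succeq 0.
\]
To connect $\mathbf g$ to $\mathbf H \mathbf e$, I would apply the fundamental theorem of calculus along the segment $M_t := (1-t) ZZ^T + t\, XX^T$. Because $ZZ^T$ is a global minimizer of $f$, we have $\nabla f(ZZ^T) = 0$, so $\mathbf g = \bar{\mathbf H}\mathbf e$ for $\bar{\mathbf H} := \int_0^1 \nabla^2 f(M_t)\, dt$. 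Each $M_t$ has rank $\leq 2r$, so integrating the pointwise $\kappa$-$\BDP_{2r}$ estimate produces the key inequality
\[
\abs{\mathbf k^T (\mathbf H - \bar{\mathbf H}) \mathbf l} \leq \kappa \norm{K}_F \norm{L}_F
\]
for all $K, L$ of rank at most $2r$.

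Conditions (1) and (2) now follow by choosing appropriate test matrices. For (1), the identity $\mathbf X^T \mathbf g = 0$ gives $\mathbf X^T \mathbf H \mathbf e = \mathbf X^T (\mathbf H - \bar{\mathbf H}) \mathbf e$; pairing with a unit vector $\mathbf v = \vect V$ applies the above estimate to $XV^T + VX^T$ (rank $\leq 2r$, Frobenius norm $\leq 2\sqrt{\lambda_1(XX^T)}\,\norm{V}_F$) against $XX^T - ZZ^T$ (Frobenius norm $\norm{\mathbf e}$), which is exactly (1). For (2), set $W := \mat(\mathbf H \mathbf e) - \mat(\mathbf g) = \mat((\mathbf H - \bar{\mathbf H}) \mathbf e)$; probing $u^T W u$ with the rank-one matrix $uu^T$ (permitted because $2r \geq 2$) gives $\abs{u^T W u} \leq \kappa \norm{\mathbf e}$ for every unit $u$, hence $W \succeq -\kappa \norm{\mathbf e}\, I_n$ and $I_r \otimes W \succeq -\kappa \norm{\mathbf e}\, I_{nr}$. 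Substituting $\mat(\mathbf g) = \mat(\mathbf H \mathbf e) - W$ into the second-order condition yields (2).

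The main obstacle is essentially bookkeeping: one must verify that every object fed into BDP — the path matrices $M_t$, the displacement $XX^T - ZZ^T$, the lifted test $XV^T + VX^T$, and the probes $uu^T$ — stays within the rank-$2r$ budget, so that the single integrated estimate on $\mathbf H - \bar{\mathbf H}$ does double duty for both (1) and (2). Once this rank accounting is lined up, the two inequalities reduce to straightforward Cauchy--Schwarz-type manipulations.
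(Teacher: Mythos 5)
Your proposal is correct and follows essentially the same route as the paper: take $\mathbf H=\nabla^2 f(XX^T)$, use the first- and second-order optimality conditions, and control the gap between $\nabla f(XX^T)$ and $\mathbf H\mathbf e$ by applying $\kappa$-$\BDP_{2r}$ along the segment from $M^*$ to $XX^T$. The only cosmetic difference is that you use the integral form $\bar{\mathbf H}=\int_0^1\nabla^2 f(M_t)\,dt$ where the paper invokes the mean value theorem to produce intermediate points $\xi,\xi'$; the rank accounting and the resulting constants are identical.
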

\begin{proof}
Choose $\mathbf H$ to be the matrix satisfying
\[
(\vect K)^T\mathbf H\vect L=[\nabla^2f(XX^T)](K,L),
\]
for all $K,L \in \RR^{n \times n}$. Condition 3 follows immediately from the $\delta$-$\RIP_{2r}$ property of the function $f$. To prove the remaining two conditions, define $g(Y)=f(YY^T)$ and $M=XX^T$. Since $X$ is a local minimizer of the function $g(\cdot)$, for every $U \in \RR^{n \times r}$ with $\mathbf U=\vect U$, the first-order optimality condition implies that
\begin{equation}\label{eq:firstordercond}
0=\langle \nabla g(X),U \rangle=\langle \nabla f(M), XU^T+UX^T \rangle.
\end{equation}
Define an auxiliary function $h:\RR^{n \times n} \to \RR$ by letting
\[
h(V)=\langle \nabla f(V), XU^T+UX^T \rangle.
\]
By the mean value theorem, there exists a matrix $\xi$ on the segment between $M$ and $M^*$ such that
\begin{equation}\label{eq:xizero}
[\nabla^2f(\xi)](M-M^*,XU^T+UX^T)=\langle \nabla h(\xi),M-M^* \rangle=h(M)-h(M^*)=0,
\end{equation}
in which the last equality follows from \eqref{eq:firstordercond} and $\nabla f(M^*)=0$. Since $\rank(M) \leq r$ and $\rank(M^*) \leq r$, we have $\rank(\xi) \leq 2r$ and $\rank(M-M^*) \leq 2r$. Applying the $\kappa$-$\BDP_{2r}$ property to the Hessian of $f(\cdot)$ at matrices $M$ and $\xi$, together with \eqref{eq:xizero}, one can obtain
\begin{align*}
\abs{\mathbf e^T\mathbf H\mathbf X\mathbf U}&=\abs{[\nabla^2f(M)](M-M^*,XU^T+UX^T)} \\
&\leq \kappa\norm{M-M^*}_F\norm{XU^T+UX^T}_F \\
&\leq 2\kappa\norm{\mathbf e}\norm{XU^T}_F \\
&=2\kappa\norm{\mathbf e}\sqrt{\tr(UX^TXU^T)} \\
&\leq 2\kappa\norm{\mathbf e}\sqrt{\lambda_1(XX^T)}\norm{\mathbf U}
\end{align*}
Condition 1 can be proved by setting $\mathbf U=\mathbf X^T\mathbf H\mathbf e$.

For every $U \in \RR^{n \times r}$ with $\mathbf U=\vect U$, the second-order optimality condition gives
\begin{equation}\label{eq:hessianeq1}
0 \leq [\nabla^2g(X)](U,U)=[\nabla^2f(M)](XU^T+UX^T,XU^T+UX^T)+2\langle \nabla f(M),UU^T \rangle.
\end{equation}
The first term on the right-hand side can be equivalently written as $(\mathbf X\mathbf U)^T\mathbf H(\mathbf X\mathbf U)$. A similar argument can be made to conclude that there exists another matrix $\xi'$ on the segment between $M$ and $M^*$ such that
\begin{align}
\langle \nabla f(M),UU^T \rangle&=\langle \nabla f(M)-\nabla f(M^*),UU^T \rangle \nonumber \\
&=[\nabla^2f(\xi')](M-M^*,UU^T) \nonumber \\
&\leq [\nabla^2f(M)](M-M^*,UU^T) \nonumber+\kappa\norm{M-M^*}_F\norm{UU^T}_F \nonumber \\
&=\vect(UU^T)\mathbf H\mathbf e+\kappa\norm{\mathbf e}\norm{\mathbf U}^2 \nonumber \\
&=\frac{1}{2}(\vect U)^T\vect((W+W^T)U)+\kappa\norm{\mathbf e}\norm{\mathbf U}^2 \nonumber \\
&=\mathbf U^T(I_r \otimes \mat(\mathbf H\mathbf e))\mathbf U+\kappa\norm{\mathbf e}\norm{\mathbf U}^2, \label{eq:hessianeq2}
\end{align}
in which $W \in \RR^{n \times n}$ is the unique matrix satisfying $\vect W=\mathbf H\mathbf e$. Condition 2 can be obtained by combining \eqref{eq:hessianeq1} and \eqref{eq:hessianeq2}.
\end{proof}

For given $X,Z \in \RR^{n \times r}$ and $\kappa \geq 0$, one can construct an optimization problem based on the conditions in Lemma~\ref{lem:necessary} as follows:
\begin{equation}\label{eq:ripopt}
\begin{aligned}
\min_{\delta,\mathbf H} \quad & \delta \\
\st \quad & \norm{\mathbf X^T\mathbf H\mathbf e} \leq a, \\
& 2I_r \otimes \mat(\mathbf H\mathbf e)+\mathbf X^T\mathbf H\mathbf X \succeq -bI_{nr}, \\
& \text{$\mathbf H$ is symmetric and satisfies $\delta$-$\RIP_{2r}$},
\end{aligned}
\end{equation}
where
\begin{equation}\label{eq:abdef}
a=2\kappa\sqrt{\lambda_1(XX^T)}\norm{\mathbf e}, \quad b=2\kappa\norm{\mathbf e}.
\end{equation}
Let $\delta(X,Z;\kappa)$ be the optimal value of \eqref{eq:ripopt}. Assume that $f$ in the original problem \eqref{eq:nonlinearrec} satisfies $\delta$-$\RIP_{2r}$ and $\kappa$-$\BDP_{2r}$. By Lemma~\ref{lem:necessary}, if $X$ is a local minimizer of \eqref{eq:nonlinearrec} and $Z$ is a global minimizer of \eqref{eq:nonlinearrec} with $M^*=ZZ^T$, then $\delta \geq \delta(X,Z;\kappa)$. As a result, by defining $\delta^*(\kappa)$ as the optimal value of the optimization problem
\[
\min_{X,Z \in \RR^{n \times r}}\delta(X,Z;\kappa) \quad \st \quad XX^T \neq ZZ^T,
\]
the problem \eqref{eq:nonlinearrec} is guaranteed to have no spurious local minima as long as $\delta<\delta^*(\kappa)$.

The remaining task is to compute $\delta(X,Z;\kappa)$ and $\delta^*(\kappa)$. First, by the property of the Schur complement, the first constraint in \eqref{eq:ripopt} can be equivalently written as
\[
\begin{bmatrix}
I_{nr} & \mathbf X^T\mathbf H\mathbf e \\
(\mathbf X^T\mathbf H\mathbf e)^T & a^2
\end{bmatrix} \succeq 0.
\]
The major difficulty of solving \eqref{eq:ripopt} comes from the last constraint, since it is NP-hard to verify whether a given quadratic form satisfies $\delta$-$\RIP_{2r}$ \citep{TP2014}. Instead, we tighten the last constraint of \eqref{eq:ripopt} by requiring $\mathbf H$ to have a norm-preserving property for all matrices instead of just for matrices with rank at most $2r$, i.e.,
\[
(1-\delta)\norm{\mathbf U}^2 \leq \mathbf U^T\mathbf H\mathbf U \leq (1+\delta)\norm{\mathbf U}^2, \quad \forall \mathbf U \in \RR^{n^2},
\]
which leads to following semidefinite program:
\begin{equation}\label{eq:ripsdp}
\begin{aligned}
\min_{\delta,\mathbf H} \quad & \delta \\
\st \quad & \begin{bmatrix}
I_{nr} & \mathbf X^T\mathbf H\mathbf e \\
(\mathbf X^T\mathbf H\mathbf e)^T & a^2
\end{bmatrix} \succeq 0, \\
& 2I_r \otimes \mat(\mathbf H\mathbf e)+\mathbf X^T\mathbf H\mathbf X \succeq -bI_{nr}, \\
& (1-\delta)I_{n^2} \preceq \mathbf H \preceq (1+\delta)I_{n^2}.
\end{aligned}
\end{equation}

Similar to the case with linear measurements studied in \cite{ZSL2019}, due to the symmetry under orthogonal projections, the problems \eqref{eq:ripopt} and \eqref{eq:ripsdp} turn out to have the same optimal value. This result is a direct generalization of \cite[Theorem~8]{ZSL2019} to the case with nonlinear measurements. See Appendix~\ref{sec:app2} for the proof.

\begin{lemma}\label{lem:relax}
For given $X,Z \in \RR^{n \times r}$ and $\kappa \geq 0$, the optimization problems \eqref{eq:ripopt} and \eqref{eq:ripsdp} have the same optimal value.
\end{lemma}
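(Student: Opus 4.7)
The equality of the two optimal values is established by two containments. The inequality that the optimal value of \eqref{eq:ripopt} is at most that of \eqref{eq:ripsdp} is immediate: any feasible $(\delta,\mathbf H)$ of \eqref{eq:ripsdp} is also feasible for \eqref{eq:ripopt} with the same $\delta$, because the spectral constraint $(1-\delta)I_{n^2}\preceq\mathbf H\preceq(1+\delta)I_{n^2}$ restricted to $\mathbf U=\vect U$ with $\rank(U)\le 2r$ is exactly the $\delta$-$\RIP_{2r}$ condition, while the first two constraints are identical in both problems.

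For the reverse containment I would adapt the strategy of \cite[Theorem~8]{ZSL2019}. Starting from any feasible $(\delta,\mathbf H)$ of \eqref{eq:ripopt}, the plan is to construct a symmetric $\mathbf H'$ obeying the stronger bound $(1-\delta)I_{n^2}\preceq\mathbf H'\preceq(1+\delta)I_{n^2}$ without disturbing the first two constraints. Those constraints probe $\mathbf H$ only through the linear images $\mathbf H\mathbf e$, $\mathbf X^T\mathbf H\mathbf e$, and $\mathbf X^T\mathbf H\mathbf X$, leaving substantial freedom in the remaining directions. A natural first attempt is the projection-plus-identity construction
\[
\mathbf H'=P_T\mathbf H P_T+(I_{n^2}-P_T),
\]
where $P_T$ is the orthogonal projection onto the low-dimensional subspace $T\subseteq\RR^{n^2}$ spanned by $\mathbf e$, the columns of $\mathbf X$, $\mathbf H\mathbf e$, and the columns of $\mathbf H\mathbf X$. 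Since $\mathbf H\mathbf e$ and the columns of $\mathbf H\mathbf X$ lie in $T$ by construction, one checks directly that $\mathbf H'\mathbf e=\mathbf H\mathbf e$ and $\mathbf X^T\mathbf H'\mathbf X=\mathbf X^T\mathbf H\mathbf X$, so the first two constraints transfer verbatim.

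The main obstacle is verifying the strengthened spectral bound on $\mathbf H'$. On $T^\perp$ the bound holds trivially because $\mathbf H'|_{T^\perp}=I$, so the real task is to establish $(1-\delta)\norm{\mathbf u}^2\le\mathbf u^T\mathbf H\mathbf u\le(1+\delta)\norm{\mathbf u}^2$ for every $\mathbf u\in T$. The $\delta$-$\RIP_{2r}$ hypothesis only guarantees this when $\mathbf u=\vect U$ with $\rank(U)\le 2r$, whereas a generic element of $T$ is a linear combination of such vectors and can correspond to a matrix of higher rank. To close this gap I would exploit the orthogonal-conjugation symmetry of the problem: the joint substitution $X\mapsto QX$, $Z\mapsto QZ$, and $\mathbf H\mapsto(Q\otimes Q)^T\mathbf H(Q\otimes Q)$ for $Q\in O(n)$ preserves both the $\delta$-$\RIP_{2r}$ property and the first two constraints, and transports $T$ to its image under the same orthogonal map. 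After aligning coordinates so that $X$ and $Z$ take a canonical form (e.g., via a simultaneous SVD), one averages $\mathbf H$ over the residual symmetry group fixing this canonical form; this forces the compressed operator $P_T\mathbf H P_T$ into a block-structured form from which the full spectral bound follows direction by direction via the $\delta$-$\RIP_{2r}$ property. This generalizes the ``symmetry under orthogonal projections'' step of \cite[Theorem~8]{ZSL2019}, with the additional bookkeeping required to accommodate the nonzero slack parameters $a$ and $b$ coming from the $\kappa$-$\BDP_{2r}$ hypothesis.
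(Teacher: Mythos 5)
Your first containment is fine, but the hard direction has a genuine gap at exactly the step you flag as the ``main obstacle,'' and the proposed symmetrization does not close it. The subspace $T=\mathrm{span}\{\mathbf e,\ \text{columns of }\mathbf X,\ \mathbf H\mathbf e,\ \text{columns of }\mathbf H\mathbf X\}$ contains vectors that are not vectorizations of matrices of rank at most $2r$: the vectors $\mathbf H\mathbf e$ and $\mathbf H\mathbf X\mathbf u$ have no rank structure at all, and even a combination of $\mathbf e$ with a column of $\mathbf X$ corresponds to $XX^T-ZZ^T+XU^T+UX^T$, whose rank can reach roughly $3r$. The $\delta$-$\RIP_{2r}$ hypothesis constrains $\mathbf H$ \emph{only} on rank-$\leq 2r$ vectorizations; off that set a feasible $\mathbf H$ for \eqref{eq:ripopt} is completely unconstrained, so quantities such as $(\mathbf H\mathbf e)^T\mathbf H(\mathbf H\mathbf e)$ can be arbitrarily large while feasibility is retained. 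Hence $P_T\mathbf H P_T$ need not satisfy anything close to $(1-\delta)I\preceq\cdot\preceq(1+\delta)I$, and your $\mathbf H'$ can be infeasible for \eqref{eq:ripsdp} with the same $\delta$ (note the lemma asserts exact equality of optimal values, so even a degraded constant would not suffice). Averaging over the stabilizer of $(X,Z)$ (an $O(n-d)$ acting on the complement $S^\perp$ of the shared column space, $d\leq 2r$) does preserve feasibility and block-structures the averaged matrix, but it does not remove the uncontrolled directions: for example the invariant direction $\vect(I_{S^\perp})$, which couples to the $S\otimes S$ block, is the vectorization of a matrix of rank $n-d\gg 2r$, and $\RIP_{2r}$ gives no $(1\pm\delta)$ control there. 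The tension is intrinsic: any subspace large enough to contain $\mathbf H\mathbf e$ and $\mathbf H\mathbf X$ (so that compression preserves the first two constraints) is too large for $\RIP_{2r}$ to certify the spectral bound, while the subspace on which $\RIP_{2r}$ does certify it, namely $\{\vect(PYP^T)\}$ with $P$ spanning the column spaces of $X$ and $Z$, does not contain $\mathbf H\mathbf e$, so compressing onto it alters the constraints.

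The paper circumvents this by never modifying a feasible $\mathbf H$ of \eqref{eq:ripopt} directly in the hard direction. It sandwiches four problems: $\LMI(X,Z)\geq\OPT(X,Z)\geq\overline\LMI(X,Z)$, where $\overline\LMI$ imposes the spectral constraint only on the compression $\mathbf P^T\mathbf H\mathbf P$ with $\mathbf P=P\otimes P$ (every element of the range of $\mathbf P$ has rank at most $d\leq 2r$, so this step is exactly where $\RIP_{2r}$ is used); then it proves $\overline\LMI(X,Z)\geq\LMI(\hat X,\hat Z)$ for the reduced data $\hat X=P^TX$, $\hat Z=P^TZ$ by \emph{SDP duality} (Slater's condition for the reduced dual, plus a map sending reduced dual feasible points to dual feasible points of $\overline\LMI$ with equal objective), and finally $\LMI(\hat X,\hat Z)\geq\LMI(X,Z)$ by the lift $\mathbf H=\mathbf P\hat{\mathbf H}\mathbf P^T+(I_{n^2}-\mathbf P\mathbf P^T)$. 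So the projection-plus-identity construction is valid only in the lifting direction (from the reduced problem back to the full one); the comparison you are trying to make constructively is handled by a duality argument, and your proposal is missing that idea.
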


Even if the value of $\delta(X,Z;\kappa)$ for given $X$, $Z$ and $\kappa$ can now be efficiently calculated by solving the semidefinite program \eqref{eq:ripsdp}, to further compute $\delta^*(\kappa)$, an analytical expression is still needed for $\delta(X,Z;\kappa)$. For our purpose, it is sufficient to find a lower bound on $\delta(X,Z;\kappa)$. In the remainder of this section, as a last step to prove Theorem~\ref{thm:main} and Theorem~\ref{thm:local}, we will focus on the problem of lower bounding $\delta(X,Z;\kappa)$ and $\delta^*(\kappa)$. Before digging into this problem, we shall first study the function $\delta(X,Z;\kappa)$ numerically to gain some intuition.

\subsection{Numerical Illustration}

To numerically analyze $\delta(X,Z;\kappa)$, we select different values for $\kappa$, and in each case we sample $X$ and $Z$ randomly by drawing each entry of these matrices independently from the standard normal distribution and then solve the semidefinite program \eqref{eq:ripsdp} to evaluate $\delta(X,Z;\kappa)$. The empirical cumulative distributions of $\delta(X,Z;\kappa)$ from 5000 samples for $n=5$ and different $\kappa$ and rank $r$ are given in Figure~\ref{fig:delta}. It can be observed that when $\kappa$ increases, $\delta(X,Z;\kappa)$ becomes smaller and a worse bound is expected from finding the minimum value $\delta^*(\kappa)$ of $\delta(X,Z;\kappa)$. On the other hand, $\delta(X,Z;\kappa)$ increases when $r$ grows. For example, in the case when $\kappa=0.05$, all of the samples satisfy $\delta(X,Z;\kappa) \geq 0.44$ for rank $r=1$, $\delta(X,Z;\kappa) \geq 0.51$ for rank $r=2$, and $\delta(X,Z;\kappa) \geq 0.64$ for rank $r=3$. This observation suggests that the stochastic gradient method may perform better in the higher-rank cases, since its trajectory during the iteration is less likely to be close to a spurious local minimizer, and it will be easier to escape even if the trajectory encounters a spurious solution $X$ that is not detected in the above sampling process.

\subsection{Global Guarantee for the Rank-1 Case}

When $r=1$, $X$ and $Z$ reduce to vectors and henceforth will be denoted as $x$ and $z$ with
\[
\mathbf e=x \otimes x-z \otimes z, \quad \mathbf Xu=x \otimes u+u \otimes x, \quad \sqrt{\lambda_1(xx^T)}=\norm{x}.
\]
Moreover,
\begin{equation}\label{eq:xsvd}
\norm{\mathbf Xu}^2=2\norm{x}^2\norm{u}^2+2(x^Tu)^2, \quad \forall u \in \RR^n.
\end{equation}

Given two vectors $x,z \in \RR^n$ with $x \neq 0$ and $xx^T \neq zz^T$, one can find a unit vector $w \in \RR^n$ such that $w$ is orthogonal to $x$ and $z=c_1x+c_2w$ for some scalars $c_1$ and $c_2$. Then,
\[
\mathbf e=\mathbf X\tilde y-c_2^2(w \otimes w),
\]
in which
\[
\tilde y=\frac{1-c_1^2}{2}x-c_1c_2w.
\]
Note that $\mathbf X\tilde y$ is orthogonal to $w \otimes w$. Furthermore, since $\tilde y \neq 0$ by $xx^T \neq zz^T$ and thus $\mathbf X\tilde y \neq 0$ by \eqref{eq:xsvd}, one can rescale $\tilde y$ into $\hat y$ such that $\norm{\mathbf X\hat y}=1$ and
\begin{equation}\label{eq:error}
\mathbf e=\norm{\mathbf e}(\sqrt{1-\alpha^2}\mathbf X\hat y-\alpha(w \otimes w)),
\end{equation}
with
\begin{equation}\label{eq:alpha}
\alpha:=\frac{c_2^2}{\norm{\mathbf e}}=\frac{\norm{z}^2-(x^Tz/\norm{x})^2}{\norm{\mathbf e}}.
\end{equation}
In addition, \eqref{eq:xsvd} also implies
\begin{equation}\label{eq:ynorm}
\norm{\hat y} \leq \frac{\norm{\mathbf X\hat y}}{\sqrt 2\norm{x}}=\frac{1}{\sqrt 2\norm{x}}.
\end{equation}

\begin{figure}[!t]
\centering
\subfloat[$\kappa=0$]{\includegraphics[width=6cm]{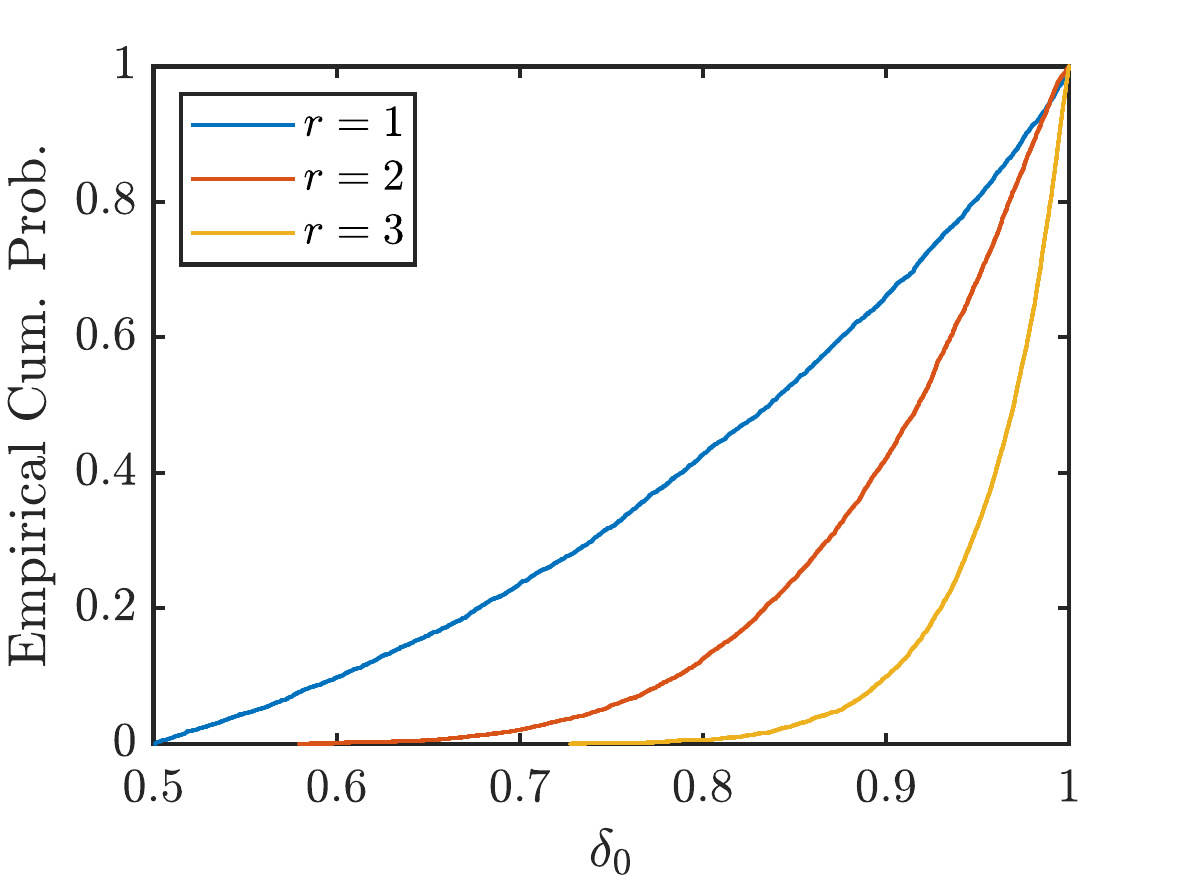}}\quad
\subfloat[$\kappa=0.05$]{\includegraphics[width=6cm]{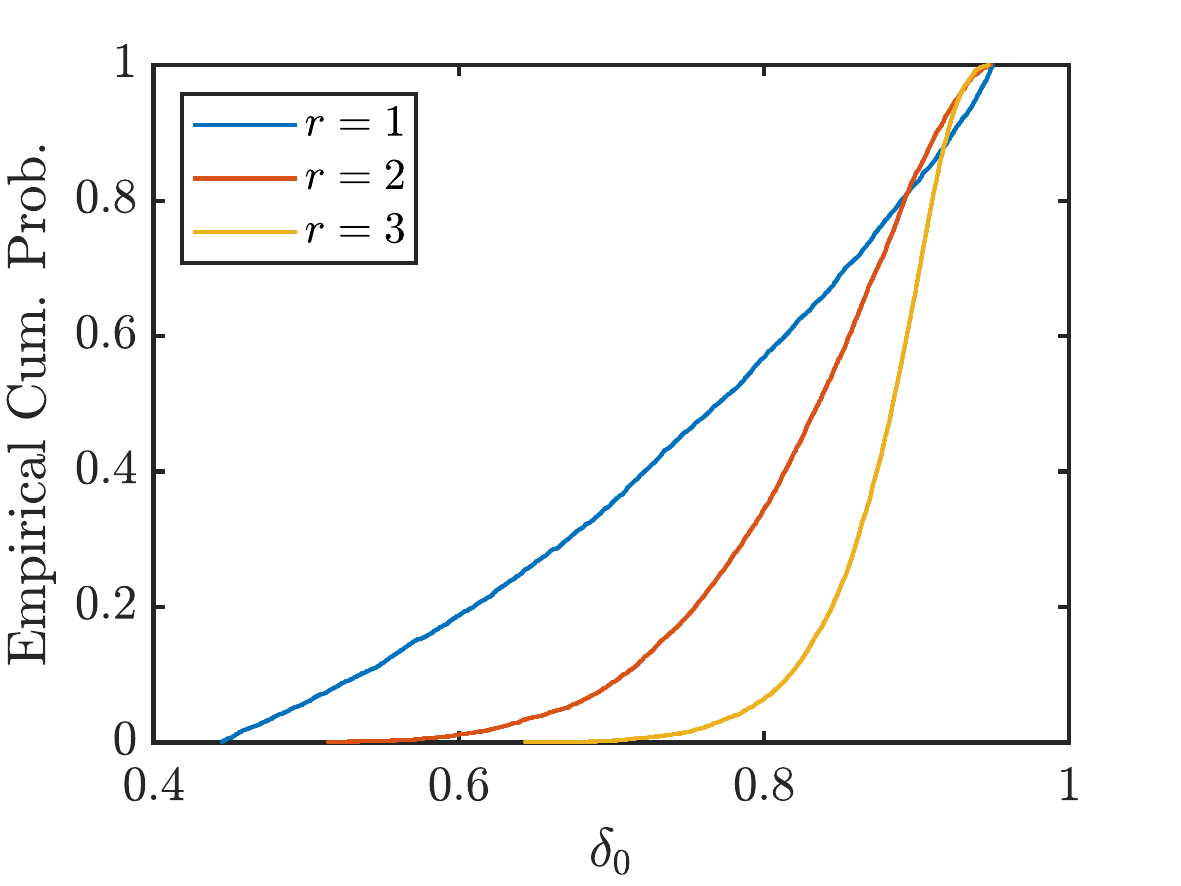}}\\
\subfloat[$\kappa=0.1$]{\includegraphics[width=6cm]{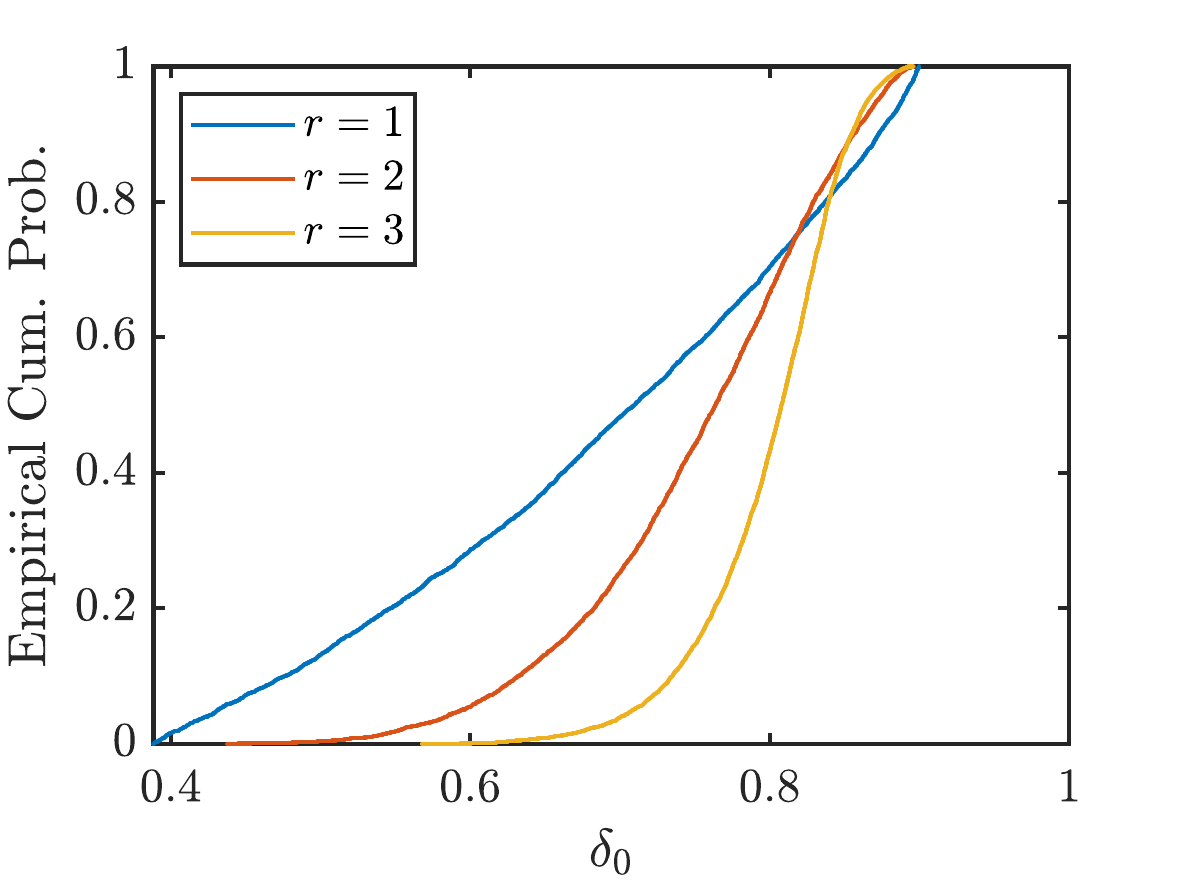}}\quad
\subfloat[$\kappa=0.2$]{\includegraphics[width=6cm]{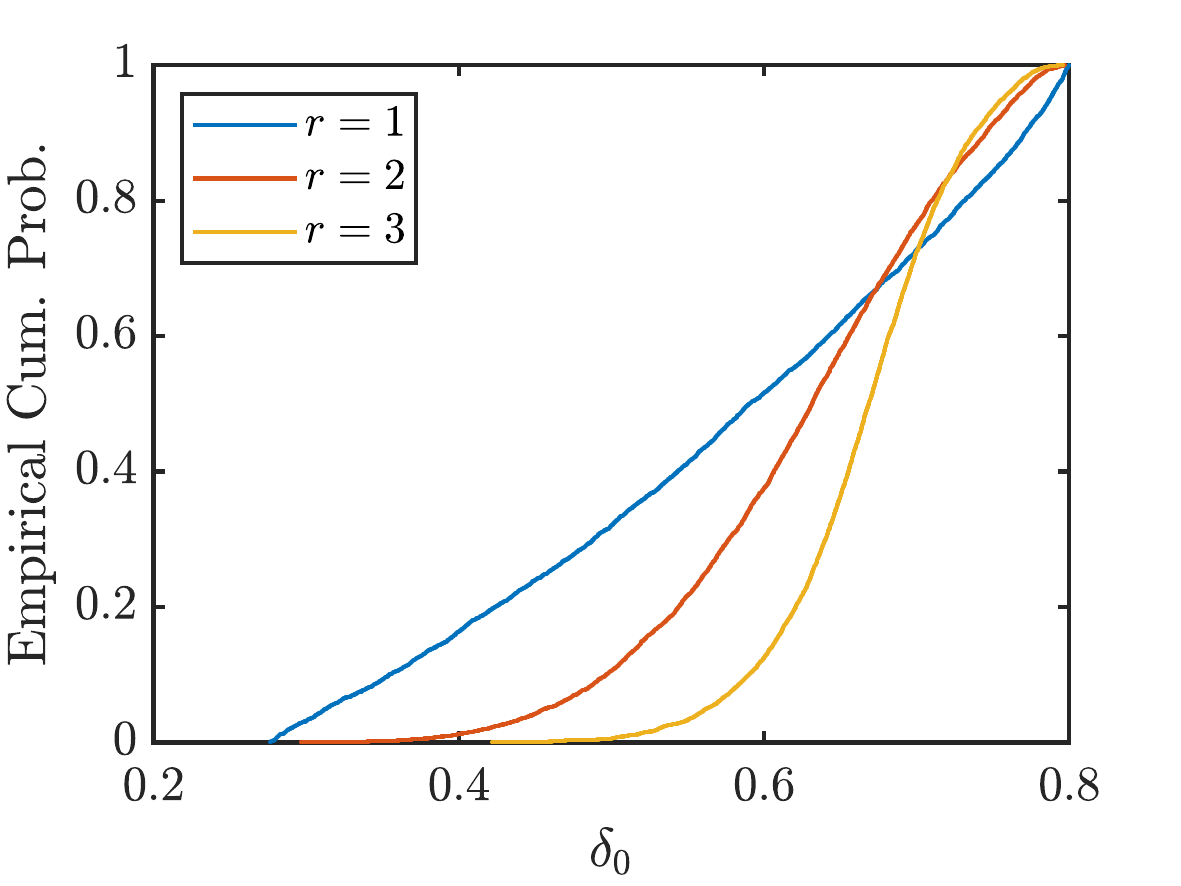}}
\caption{The empirical probability of $\delta(X,Z;\kappa) \leq \delta_0$ for randomly generated $X$ and $Z$ matrices with $n=5$ and different $\kappa$ and rank $r$.}\label{fig:delta}
\end{figure}

To proceed with our proof, we will need the next lemma that studies the eigenvalues of some structured rank-2 matrices.

\begin{lemma}[\cite{ZSL2019}]\label{lem:eigangle}
Let $u$ and $v$ be two vectors of the same dimension. The eigenvalues of the matrix $uv^T+vu^T$ can take only three possible values
\[
\norm{u}\norm{v}(1+\cos\theta), \quad -\norm{u}\norm{v}(1-\cos\theta), \quad 0,
\]
where $\theta$ is the angle between $u$ and $v$.
\end{lemma}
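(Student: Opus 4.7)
The plan is to reduce the eigenvalue problem for $uv^T+vu^T$ to a two-dimensional one by exploiting that this matrix is symmetric and of rank at most two, so every eigenvector associated with a nonzero eigenvalue must lie in the two-dimensional subspace $S=\mathrm{span}(u,v)$. Consequently, all but at most two eigenvalues are zero, and the remaining ones are exactly the eigenvalues of the restriction of $uv^T+vu^T$ to $S$.

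First I would dispose of the degenerate cases. If $u=0$ or $v=0$, the matrix is zero and all eigenvalues equal $0$, which is consistent with the stated list (any of the three formulas evaluates to $0$). If $u,v\neq 0$ but they are parallel, then $\theta\in\{0,\pi\}$, so one checks directly that $uv^T+vu^T=\pm 2\|u\|\|v\|\,\hat u\hat u^T$ has a single nonzero eigenvalue $\pm 2\|u\|\|v\|=\|u\|\|v\|(1+\cos\theta)$ (resp.\ $-\|u\|\|v\|(1-\cos\theta)$), again matching the list.

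In the generic case where $u$ and $v$ are linearly independent, I would pick an orthonormal basis of $S$ that is convenient for computation, namely $e_1=u/\|u\|$ and $e_2=(v-(e_1^Tv)e_1)/\|v-(e_1^Tv)e_1\|$. In this basis, $u=\|u\|e_1$ and $v=\|v\|\cos\theta\,e_1+\|v\|\sin\theta\,e_2$, so the matrix of $uv^T+vu^T$ restricted to $S$ is
\[
\begin{bmatrix}
2\|u\|\|v\|\cos\theta & \|u\|\|v\|\sin\theta \\
\|u\|\|v\|\sin\theta & 0
\end{bmatrix}.
\]
Its trace is $2\|u\|\|v\|\cos\theta$ and its determinant is $-\|u\|^2\|v\|^2\sin^2\theta$, so the two eigenvalues satisfy $\lambda^2-2\|u\|\|v\|\cos\theta\,\lambda-\|u\|^2\|v\|^2\sin^2\theta=0$.

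Solving the quadratic yields $\lambda=\|u\|\|v\|(\cos\theta\pm 1)$, which are precisely $\|u\|\|v\|(1+\cos\theta)$ and $-\|u\|\|v\|(1-\cos\theta)$. Combined with the zeros on $S^\perp$, this gives the three possible eigenvalues in the lemma. There is no serious obstacle here: the only subtlety is being careful about when $u$ and $v$ are parallel (so that $e_2$ is not well defined), which is exactly why the lemma's list contains three values rather than two, and this case is handled separately as above.
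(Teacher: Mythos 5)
Your proof is correct. Note that the paper does not prove this lemma at all---it is quoted from \cite{ZSL2019} as a known auxiliary fact---so there is no in-paper argument to compare against; your reduction to the invariant two-dimensional subspace $\mathrm{span}(u,v)$, the explicit $2\times 2$ matrix in the orthonormal basis built from $u$ and the component of $v$ orthogonal to $u$, and the trace/determinant computation giving $\lambda=\norm{u}\norm{v}(\cos\theta\pm 1)$ is exactly the standard verification, and your separate treatment of the zero and parallel cases closes the only degenerate situations.
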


\begin{lemma}\label{lem:deltamin}
Let $x,z \in \RR^n$ with $xx^T \neq zz^T$. The optimal value $\delta(x,z;\kappa)$ of \eqref{eq:ripsdp} satisfies
\[
\delta(x,z;\kappa) \geq \frac{1-\eta_0(x,z)-2(1+\sqrt 2)\kappa}{1+\eta_0(x,z)+2(1+\sqrt 2)\kappa},
\]
in which
\[
\eta_0(x,z)=\begin{dcases*}
\frac{1-\sqrt{1-\alpha^2}}{1+\sqrt{1-\alpha^2}}, & if $\beta \geq \dfrac{\alpha}{1+\sqrt{1-\alpha^2}}$, \\
\frac{\beta(\beta-\alpha)}{\beta\alpha-1}, & if $\beta \leq \dfrac{\alpha}{1+\sqrt{1-\alpha^2}}$,
\end{dcases*}
\]
with $\alpha$ defined in \eqref{eq:alpha}\footnote{When $x=0$, $\alpha$ is defined to be $\norm{z}^2/\norm{\mathbf e}$.} and $\beta=\norm{x}^2/\norm{\mathbf e}$.
\end{lemma}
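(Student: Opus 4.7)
The plan is to lower-bound any feasible $\delta$ of the SDP~\eqref{eq:ripsdp} by testing its three constraints against carefully chosen directions and converting every $\mathbf H$-weighted inner product into its identity-case value, incurring an error of order $\delta$ via the operator-norm estimate $\abs{\mathbf u^T(\mathbf H-I)\mathbf v}\leq\delta\norm{\mathbf u}\norm{\mathbf v}$ implied by the third constraint $(1-\delta)I_{n^2}\preceq\mathbf H\preceq(1+\delta)I_{n^2}$. The decomposition~\eqref{eq:error} furnishes an orthonormal pair $\{\mathbf X\hat y,\,w\otimes w\}$ of unit vectors spanning the plane of $\mathbf e$, with $\mathbf e/\norm{\mathbf e}=\sqrt{1-\alpha^2}\,\mathbf X\hat y-\alpha\,(w\otimes w)$, so all subsequent bookkeeping takes place in this two-dimensional subspace.

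First I would extract a bound from the first constraint $\norm{\mathbf X^T\mathbf H\mathbf e}\leq 2\kappa\norm{x}\norm{\mathbf e}$ by pairing against $\hat y$: the estimate $\norm{\hat y}\leq 1/(\sqrt 2\,\norm{x})$ from~\eqref{eq:ynorm} gives $\abs{(\mathbf X\hat y)^T\mathbf H\mathbf e}\leq\sqrt 2\,\kappa\norm{\mathbf e}$, and combined with the RIP estimate around the identity value $(\mathbf X\hat y)^T\mathbf e=\sqrt{1-\alpha^2}\,\norm{\mathbf e}$ (which follows from the orthogonality of $\mathbf X\hat y$ and $w\otimes w$), this yields the inequality $\delta+\sqrt 2\,\kappa\geq\sqrt{1-\alpha^2}$. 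Next I would test the second constraint against a family of vectors $u=\mu x+\nu w$ in the $(x,w)$-plane. Expanding $2u^T\mat(\mathbf H\mathbf e)u=2\vect(uu^T)^T\mathbf H\mathbf e$, whose identity-case value is $2[(x^Tu)^2-(z^Tu)^2]$, and $(\mathbf X u)^T\mathbf H(\mathbf X u)$, whose identity-case value is $\norm{\mathbf X u}^2=2\norm{x}^2\norm{u}^2+2(x^Tu)^2$ by~\eqref{eq:xsvd} (its rank-$2$ structure is compatible with Lemma~\ref{lem:eigangle}), and replacing each $\mathbf H$-weighted term by its identity value modulo the RIP error together with the slack $b=2\kappa\norm{\mathbf e}$ on the right-hand side, I would obtain an inequality $\delta\,Q(\mu,\nu)\geq R(\mu,\nu;\alpha,\beta,c_1,c_2)-2\kappa\norm{\mathbf e}\norm{u}^2$ in which $Q$ and $R$ are explicit quadratic forms.

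Because this inequality is homogeneous of degree zero in $(\mu,\nu)$, optimizing over $u$ reduces to a scalar problem in a single parameter $\eta=\eta(\mu,\nu)$ chosen so that, after rearrangement, the bound takes the fractional form $\delta\geq(1-\eta-\gamma_1\kappa)/(1+\eta+\gamma_2\kappa)$ for some coefficients $\gamma_1,\gamma_2$. The maximum over this parameter is attained either at an interior critical point of the resulting Rayleigh-like quotient, yielding $\eta_0=\beta(\beta-\alpha)/(\beta\alpha-1)$, or at the boundary of its feasible range, yielding $\eta_0=(1-\sqrt{1-\alpha^2})/(1+\sqrt{1-\alpha^2})$, with the crossover occurring exactly where $\beta=\alpha/(1+\sqrt{1-\alpha^2})$. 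Combining the optimized second-constraint bound with the first-constraint bound (the latter enforcing the boundary case) then yields the target inequality $\delta(1+\eta_0+2(1+\sqrt 2)\kappa)\geq 1-\eta_0-2(1+\sqrt 2)\kappa$.

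The main technical obstacle is the optimization step: choosing the right scalar reparametrization so that the piecewise formula for $\eta_0$ emerges naturally, and tracking how the $\sqrt 2\,\kappa$ contribution from the first constraint and the $2\kappa$ contribution from the second constraint assemble into the single constant $2(1+\sqrt 2)=2+2\sqrt 2$ in the final bound. Verifying that both branches of $\eta_0$ genuinely arise as SDP-induced lower bounds, rather than one of them dominating the other everywhere, requires a careful analysis of the threshold condition on~$\beta$.
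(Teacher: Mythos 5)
There is a genuine gap in the second step: probing the two SDP constraints \emph{separately} and converting each $\mathbf H$-weighted pairing to its identity value with a $\pm\delta$ operator-norm error cannot produce the constant claimed in the lemma. Each of your two steps yields a standalone scalar inequality ($\delta \geq \sqrt{1-\alpha^2}-\sqrt 2\kappa$ from the first constraint, and $\delta\,Q \geq R - 2\kappa\norm{\mathbf e}\norm{u}^2$ from the second), and combining two already-converted lower bounds can give at most their maximum. Concretely, take $\kappa=0$, $x\perp z$, $\norm{x}^2=1/2$, $\norm{z}^2=1$: then $\alpha=2/\sqrt 5$, $\beta=1/\sqrt 5$, the second branch applies with $\eta_0=1/3$, so the lemma asserts $\delta(x,z;0)\geq 1/2$ (and this is the tight worst case behind the $\delta<1/2$ theorem). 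Your first step gives $\delta\geq\sqrt{1-\alpha^2}=1/\sqrt 5\approx 0.447$, and optimizing your second step over $u=\mu x+\nu w$ (even if you additionally use the first constraint to annihilate the component of $\vect(uu^T)$ lying in the range of $\mathbf X$) gives at best $\delta\geq 1/(1+\sqrt 5)\approx 0.309$ at $\mu=0$; the maximum is $0.447<1/2$. So the plan as stated cannot reach the second branch of $\eta_0$, which is precisely the regime where the second-order constraint must help.

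The missing idea is that the two directions have to be merged into a \emph{single} certificate before any spectral conversion is applied. The paper passes to $\eta(x,z;\kappa)$ with $\eta\geq(1-\delta)/(1+\delta)$ and exhibits a feasible point of the dual \eqref{eq:etadual} built from the mixed vector $\mathbf X y - v$ with $y\propto\sqrt{1-\gamma^2}\,\hat y$ and $v\propto\gamma\,(w\otimes w)$, $\gamma\in[0,\alpha]$ a free mixing parameter; Lemma~\ref{lem:eigangle} is applied once to the rank-two matrix $(\mathbf Xy-v)\mathbf e^T+\mathbf e(\mathbf Xy-v)^T$, so the orthogonality of $\mathbf X\hat y$ and $w\otimes w$ makes the combined error scale like $\norm{\mathbf Xy-v}=\sqrt{(1-\gamma^2)+\gamma^2}/\norm{\mathbf e}$ rather than the sum of two separate errors, and the $\kappa$-terms $2a\norm{y}\leq 2\sqrt 2\kappa$ and $\langle \mathbf X^T\mathbf X+bI_n,V\rangle$ enter through the dual objective, assembling into $2(1+\sqrt 2)\kappa$. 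Minimizing the resulting quotient $\Psi(\gamma)=(2\beta\gamma+1-\psi(\gamma))/(1+\psi(\gamma))$ over $\gamma$ is what generates the two branches of $\eta_0$ (boundary $\gamma=0$ versus interior stationary point), with the crossover at $\beta=\alpha/(1+\sqrt{1-\alpha^2})$. Your first step does recover the $\gamma=0$ branch, but to recover the other branch you must take a positive combination of the raw constraint inequalities (first-order probe, second-order probe with $V\propto ww^T$, and the two-sided bound on $\mathbf H$ applied to the combined rank-two matrix) \emph{before} replacing $\mathbf H$ by $I$ --- i.e., reconstruct the dual certificate. Separately, note that the degenerate case $x=0$ (where $\hat y$ and your Rayleigh quotient are undefined) needs its own argument, which the paper handles with an explicit dual point giving $\eta\leq\kappa$.
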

\begin{proof}
Define $\eta(x,z;\kappa)$ to be the optimal value of the following optimization problem:
\begin{equation}\label{eq:etaopt}
\begin{aligned}
\max_{\eta,\mathbf H} \quad & \eta \\
\st \quad & \begin{bmatrix}
I_{nr} & \mathbf X^T\mathbf H\mathbf e \\
(\mathbf X^T\mathbf H\mathbf e)^T & a^2
\end{bmatrix} \succeq 0, \\
& 2\mat(\mathbf H\mathbf e)+\mathbf X^T\mathbf X \succeq -bI_{nr}, \\
& \eta I_{n^2} \preceq \mathbf H \preceq I_{n^2}.
\end{aligned}
\end{equation}
It can be verified that
\begin{equation}\label{eq:deltaeta}
\eta(x,z;\kappa) \geq \frac{1-\delta(x,z;\kappa)}{1+\delta(x,z;\kappa)},
\end{equation}
because given any feasible solution $(\delta,\mathbf H)$ to \eqref{eq:ripsdp}, the point
\[
\left(\frac{1-\delta}{1+\delta},\frac{1}{1+\delta}\mathbf H\right)
\]
is also a feasible solution to \eqref{eq:etaopt}. The reason is that the first and last constraints in \eqref{eq:etaopt} naturally hold while the second constraint is satisfied due to
\begin{equation*}
2\mat\left(\frac{1}{1+\delta}\mathbf H\mathbf e\right)+\mathbf X^T\mathbf X \succeq
\frac{1}{1+\delta}(2\mat(\mathbf H\mathbf e)+\mathbf X^T\mathbf H\mathbf X) \succeq -\frac{b}{1+\delta}I_{nr} \succeq -bI_{nr}.
\end{equation*}
Therefore, to find a lower bound on $\delta(x,z;\kappa)$, we only need to find an upper bound on $\eta(x,z;\kappa)$.

The dual problem of \eqref{eq:etaopt} can be written as
\begin{equation}\label{eq:etadual}
\begin{aligned}
\min_{U_1,U_2,V,G,\lambda,y} \quad & \tr(U_2)+\langle \mathbf X^T\mathbf X+bI_n,V\rangle+a^2\lambda+\tr(G), \\
\st \quad & \tr(U_1)=1, \\
& (\mathbf Xy-v)\mathbf e^T+\mathbf e(\mathbf Xy-v)^T=U_1-U_2, \\
& \begin{bmatrix}
G & -y \\
-y^T & \lambda
\end{bmatrix} \succeq 0, \\
& U_1 \succeq 0, \quad U_2 \succeq 0, \quad V \succeq 0, \quad v=\vect V.
\end{aligned}
\end{equation}
By weak duality, the dual objective value associated with any feasible solution to the dual problem \eqref{eq:etadual} is an upper bound on $\eta(x,z;\kappa)$.

In the case when $x \neq 0$, we fix a constant $\gamma \in [0,\alpha]$ and choose
\[
y=\frac{\sqrt{1-\gamma^2}}{\norm{\mathbf e}}\hat y, \quad v=\frac{\gamma}{\norm{\mathbf e}}(w \otimes w),
\]
where $\hat y$ and $w$ are the vectors defined before \eqref{eq:error}. Since $\norm{\mathbf X\hat y}=1$, $\norm{w \otimes w}=1$ and $\mathbf X\hat y$ is orthogonal to $w \otimes w$, it holds that
\[
\norm{\mathbf Xy-v}=\frac{1}{\norm{\mathbf e}}.
\]
Combined with \eqref{eq:error}, one can obtain
\[
\mathbf e^T(\mathbf Xy-v)=\psi(\gamma),
\]
where $\psi(\gamma)$ is given by
\[
\psi(\gamma)=\gamma\alpha+\sqrt{1-\gamma^2}\sqrt{1-\alpha^2}.
\]
Now, define
\[
M=(\mathbf Xy-v)\mathbf e^T+\mathbf e(\mathbf Xy-v)^T
\]
and decompose
\[
M=[M]_+-[M]_-,
\]
in which both $[M]_+ \succeq 0$ and $[M]_-\succeq 0$. Let $\theta$ be the angle between $\mathbf e$ and $\mathbf Xy-v$. By Lemma~\ref{lem:eigangle}, it holds that
\begin{align*}
\tr([M]_+)&=\norm{\mathbf e}\norm{\mathbf Xy-v}(1+\cos\theta)=1+\psi(\gamma), \\
\tr([M]_-)&=\norm{\mathbf e}\norm{\mathbf Xy-v}(1-\cos\theta)=1-\psi(\gamma).
\end{align*}
Then, it is routine to verify that
\begin{gather*}
U_1^*=\frac{[M]_+}{\tr([M]_+)}, \quad U_2^*=\frac{[M]_-}{\tr([M]_+)}, \\
v^*=\frac{v}{\tr([M]_+)}, \quad G^*=\frac{1}{\lambda^*}y^*y^{*T} \\
\lambda^*=\frac{\norm{y^*}}{a}, \quad y^*=\frac{y}{\tr([M]_+)}
\end{gather*}
forms a feasible solution to the dual problem \eqref{eq:etadual} whose objective value is equal to
\begin{equation}\label{eq:obj}
\frac{\tr([M]_-)+\langle \mathbf X^T\mathbf X+bI_n,V\rangle+2a\norm{y}}{\tr([M]_+)}.
\end{equation}
By \eqref{eq:xsvd} and \eqref{eq:ynorm}, one can write
\begin{gather}
\langle \mathbf X^T\mathbf X+bI_n,V\rangle=\frac{\gamma}{\norm{\mathbf e}}(\norm{\mathbf Xw}^2+b)=\frac{\gamma}{\norm{\mathbf e}}(2\norm{x}^2+b)=2(\beta+\kappa)\gamma, \label{eq:obj1} \\
2a\norm{y} \leq \frac{2a\norm{\hat y}}{\norm{\mathbf e}} \leq 2\sqrt 2\kappa, \label{eq:obj2}
\end{gather}
where $a$ and $b$ are defined in \eqref{eq:abdef}. Substituting \eqref{eq:obj1} and \eqref{eq:obj2} into \eqref{eq:obj} yields that
\[
\eta(x,z;\kappa) \leq \Psi(\gamma)+2(1+\sqrt{2})\kappa,
\]
where
\[
\Psi(\gamma)=\frac{2\beta\gamma+1-\psi(\gamma)}{1+\psi(\gamma)}.
\]
A simple calculation shows that the function $\Psi(\gamma)$ has at most one stationary point over the interval $(0,\alpha)$ and
\[
\min_{0 \leq \gamma \leq \alpha}\Psi(\gamma)=\eta_0(x,z).
\]
In the case when $x=0$, we have $\eta_0(x,z)=0$, and
\begin{gather*}
U_1=\frac{\mathbf e\mathbf e^T}{\norm{\mathbf e}^2}, \quad U_2=0, \quad V=\frac{zz^T}{2\norm{\mathbf e}^2}, \\
G=0, \quad \lambda=0, \quad y=0
\end{gather*}
forms a feasible solution to the dual problem \eqref{eq:etadual}, which implies that
\[
\eta(x,z,\kappa) \leq \langle bI_n,V \rangle=\kappa.
\]
In either case, it holds that
\[
\eta(x,z;\kappa) \leq \eta_0(x,z)+2(1+\sqrt{2})\kappa,
\]
which gives the desired result after combining it with \eqref{eq:deltaeta}.
\end{proof}

\par\noindent{\bf Proof of Theorem~\ref{thm:main}\ }
By Lemma~\ref{lem:relax} and the discussion after Lemma~\ref{lem:necessary}, we only need to show that
\begin{equation}\label{eq:boundproof}
\delta(x,z;\kappa) \geq \frac{2-6(1+\sqrt 2)\kappa}{4+6(1+\sqrt 2)\kappa},
\end{equation}
for all $x,z \in \RR^n$ with $xx^T \neq zz^T$. Similarly to the approach used in proof of \cite[Theorem~3]{ZSL2019}, it can be verified that the function $\eta_0(x,z)$ defined in the statement of Lemma~\ref{lem:deltamin} has the maximum value $1/3$ that is attained by any two vectors $x$ and $z$ that are orthogonal to each other such that $\norm{x}/\norm{z}=1/2$. Consequently, \eqref{eq:boundproof} holds in light of Lemma~\ref{lem:deltamin}.
\hfill\BlackBox\\[2mm]

\subsection{Local Guarantee for the Rank-r Case}

The key step in the proof of Theorem~\ref{thm:main} is to derive a closed-form expression serving as a lower bound on $\delta(X,Z;\kappa)$. Similar to the idea used in \cite{ZZ2020}, we need to first simplify $\delta(X,Z;\kappa)$ for the higher-rank cases by removing the constraint corresponding to the second-order optimality condition in \eqref{eq:ripsdp}. The first step is to establish the following lemma that is similar to Lemma~\ref{lem:necessary} but ignores the second-order condition.

\begin{lemma}\label{lem:necessaryfoc}
Assume that the function $f$ in the problem \eqref{eq:nonlinearrec} satisfies the $\delta$-$\RIP_{2r}$ property. If $X$ is a local minimizer of \eqref{eq:nonlinearrec} and $Z$ is a global minimizer of \eqref{eq:nonlinearrec} with $M^*=ZZ^T$, then there exists a symmetric matrix $\mathbf H \in \RR^{n^2 \times n^2}$ such that the following two conditions hold:
\begin{enumerate}
\item $\mathbf X^T\mathbf H\mathbf e=0$;
\item $\mathbf H$ satisfies the $\delta$-$\RIP_{2r}$ property, i.e, for every matrix $U \in \RR^{n \times n}$ with $\rank(U) \leq 2r$, it holds that
\[
(1-\delta)\norm{\mathbf U}^2 \leq \mathbf U^T\mathbf H\mathbf U \leq (1+\delta)\norm{\mathbf U}^2,
\]
where $\mathbf U=\vect U$.
\end{enumerate}
\end{lemma}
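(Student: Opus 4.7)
My plan is to obtain the matrix $\mathbf H$ by averaging the Hessian of $f$ along the segment joining $M^*$ and $M:=XX^T$, rather than evaluating it at a single point as in the proof of Lemma~\ref{lem:necessary}. Concretely, I would define the symmetric matrix $\mathbf H\in\RR^{n^2\times n^2}$ through the bilinear identity
\[
\mathbf U^T\mathbf H\mathbf V \;=\; \int_0^1 [\nabla^2 f((1-t)M^*+tM)](U,V)\,dt,\qquad \mathbf U=\vect U,\ \mathbf V=\vect V,
\]
for all $U,V\in\RR^{n\times n}$. This replaces the BDP-dependent construction from the previous lemma with one that bakes in the full mean-value information.

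Condition 2 would then follow immediately: since $\rank(M^*)\leq r$ and $\rank(M)\leq r$, every intermediate matrix $(1-t)M^*+tM$ has rank at most $2r$, so by the $\delta$-$\RIP_{2r}$ hypothesis on $f$, each integrand satisfies $(1-\delta)\norm{\mathbf U}^2\leq \mathbf U^T[\nabla^2 f((1-t)M^*+tM)]\mathbf U\leq(1+\delta)\norm{\mathbf U}^2$ for every rank-$2r$ matrix $U$. These inequalities are preserved under the convex combination given by integration against the probability measure $dt$ on $[0,1]$, hence $\mathbf H$ inherits $\delta$-$\RIP_{2r}$.

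For Condition 1, I would apply the fundamental theorem of calculus to the map $t\mapsto \nabla f((1-t)M^*+tM)$ to get
\[
\nabla f(M)-\nabla f(M^*) \;=\; \int_0^1 \nabla^2 f((1-t)M^*+tM)[M-M^*]\,dt,
\]
which in vectorized form reads $\vect(\nabla f(M)-\nabla f(M^*))=\mathbf H\mathbf e$. Since $M^*$ is a global minimizer of $f$ over $\RR^{n\times n}$, $\nabla f(M^*)=0$, and therefore $\vect(\nabla f(M))=\mathbf H\mathbf e$. The first-order stationarity of $g(Y)=f(YY^T)$ at $X$ then gives, for every $U\in\RR^{n\times r}$ with $\mathbf U=\vect U$,
\[
0 \;=\; \langle\nabla f(M),\,XU^T+UX^T\rangle \;=\; \mathbf U^T\mathbf X^T\vect(\nabla f(M)) \;=\; \mathbf U^T\mathbf X^T\mathbf H\mathbf e,
\]
from which $\mathbf X^T\mathbf H\mathbf e=0$ follows by arbitrariness of $\mathbf U$.

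The one genuinely new ingredient, compared with Lemma~\ref{lem:necessary}, is the choice of $\mathbf H$ itself; once this is in hand, both items reduce to standard manipulations. The reason the earlier proof evaluated the Hessian only at $M$ is that the second-order optimality condition for $g$ naturally involves $\nabla^2 f(M)$, and BDP was then invoked to transfer information between $\nabla^2 f(M)$ and the intermediate Hessian produced by the mean value theorem. Here we discard the second-order information entirely, so no such transfer is needed, and the integrated Hessian serves both purposes simultaneously: it represents the gradient difference exactly (for Condition 1) and it inherits $\delta$-$\RIP_{2r}$ by convexity (for Condition 2). I do not anticipate any substantive obstacle beyond verifying that the convex-combination step legitimately preserves the $\RIP$ bounds on the rank-$2r$ cone, which is immediate since the bounds are affine in the quadratic form.
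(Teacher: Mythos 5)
Your proof is correct, and it rests on the same structural facts as the paper's: every point on the segment joining $M^*$ and $M=XX^T$ has rank at most $2r$, so the Hessian of $f$ there obeys the $\delta$-$\RIP_{2r}$ bounds, and Condition~1 comes from combining $\nabla f(M^*)=0$ with the first-order condition $\langle\nabla f(M),XU^T+UX^T\rangle=0$. The one place you diverge is the form of the mean value argument: the paper applies the scalar mean value theorem to $h(V)=\langle\nabla f(V),XU^T+UX^T\rangle$ and takes $\mathbf H$ to be the Hessian of $f$ at the resulting single intermediate point $\xi$, whereas you take $\mathbf H$ to be the Hessian averaged over the whole segment via the fundamental theorem of calculus. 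Your variant is slightly more robust: the point $\xi$ delivered by the pointwise mean value theorem depends a priori on the direction $U$, so deducing the full vector identity $\mathbf X^T\mathbf H\mathbf e=0$ for one fixed $\mathbf H$ needs an extra word in the paper's formulation, while your integrated Hessian satisfies $\vect(\nabla f(M))=\mathbf H\mathbf e$ exactly and therefore gives Condition~1 for all $U$ simultaneously; Condition~2 survives the averaging because the RIP bounds are affine in the quadratic form and are preserved under integration against the probability measure $dt$, exactly as you argue.
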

\begin{proof}
Following the proof of Lemma~\ref{lem:necessary} but using a different $\mathbf H$ that will be given below, we arrive at a matrix $\xi \in \RR^{n \times n}$ satisfying $\rank(\xi) \leq 2r$ and \eqref{eq:xizero}. Choose $\mathbf H$ to be the matrix satisfying
\[
(\vect K)^T\mathbf H\vect L=[\nabla^2f(\xi)](K,L),
\]
for all $K,L \in \RR^{n \times n}$. Then, \eqref{eq:xizero} implies that $\mathbf e^T\mathbf H\mathbf X\mathbf U=0$, which further implies Condition 1 since $\mathbf U$ is arbitrary. Condition 2 immediately follows from the $\delta$-$\RIP_{2r}$ property of $f$.
\end{proof}

For given $X,Z \in \RR^{n \times r}$, one can similarly construct an optimization problem based on the conditions in Lemma~\ref{lem:necessaryfoc} as follows:
\begin{align*}
\min_{\delta,\mathbf H} \quad & \delta \\
\st \quad & \mathbf X^T\mathbf H\mathbf e=0, \\
& \text{$\mathbf H$ is symmetric and satisfies $\delta$-$\RIP_{2r}$},
\end{align*}
whose optimal value is the same as that of the following semidefinite program by an argument similar to Lemma~\ref{lem:relax}:
\begin{equation}\label{eq:ripsdpfoc}
\begin{aligned}
\min_{\delta,\mathbf H} \quad & \delta \\
\st \quad & \mathbf X^T\mathbf H\mathbf e=0, \\
& (1-\delta)I_{n^2} \preceq \mathbf H \preceq (1+\delta)I_{n^2}.
\end{aligned}
\end{equation}
Define $\delta_f(X,Z)$ to be the optimal value of the above problem. Then, the problem \eqref{eq:nonlinearrec} has no spurious local minima if the function $f$ satisfies $\delta$-$\RIP_{2r}$ such that $\delta<\delta_f(X,Z)$ for all $X,Z \in \RR^{n \times r}$ with $XX^T \neq ZZ^T$. Unfortunately, this argument cannot lead to a global guarantee since $\delta_f(0,Z)=0$ for every $Z \in \RR^{n \times r}$ corresponding to $\mathbf H=I_{n^2}$. Instead, we will turn to local guarantees on the region of all $X \in \RR^{n \times r}$ satisfying
\begin{equation}\label{eq:localcond}
\norm{XX^T-M^*}_F \leq \epsilon\lambda_r(M^*)
\end{equation}
and prove Theorem~\ref{thm:local} by further lower bounding $\delta_f(X,Z)$.

One important difference between the rank-1 and higher-rank cases is that in the latter there are infinitely many matrices $X \in \RR^{n \times r}$ that produce the same value for the matrix $XX^T$. In the proof of Theorem~\ref{thm:local}, the matrix $X$ is normalized by replacing it with another matrix $\tilde X$ with $\tilde X^TZ \succeq 0$ while keeping $XX^T=\tilde X\tilde X^T$. The reason for this normalization operation will be explained in the following lemma, which essentially says that after normalization $X$ and $Z$ cannot be too far away from each other if $XX^T$ and $ZZ^T$ are close.

\begin{lemma}[\cite{BNS2016}]\label{lem:errorlowerbound}
Let $X,Z \in \RR^{n \times r}$ be two arbitrary matrices such that $Z^TX=X^TZ$ is a positive semidefinite matrix. Then,
\[
\lambda_r(ZZ^T)\norm{Z-X}_F^2 \leq \frac{1}{2(\sqrt 2-1)}\norm{ZZ^T-XX^T}_F^2.
\]
\end{lemma}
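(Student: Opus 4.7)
My approach is to relate the two Frobenius norms by exploiting the alignment hypothesis $Z^TX = X^TZ \succeq 0$. First I would reduce to the case $\rank(Z) = r$, since otherwise $\lambda_r(ZZ^T)=0$ and there is nothing to prove. Setting $D := X - Z$, the identity $XX^T - ZZ^T = ZD^T + DZ^T + DD^T$ expands, upon taking Frobenius norms and using symmetry of $Z^TD$, to
\[
\norm{XX^T - ZZ^T}_F^2 = 2\tr(\Lambda Q) + 2\tr(P^2) + 4\tr(PQ) + \tr(Q^2),
\]
where $\Lambda := Z^TZ$, $P := Z^TD$ (symmetric, since $X^TZ = Z^TX$), and $Q := D^TD \succeq 0$, all in $\RR^{r \times r}$. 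The hypothesis becomes the linear matrix constraint $P + \Lambda = Z^TX \succeq 0$.

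Completing the square via $2\tr(P^2) + 4\tr(PQ) + \tr(Q^2) = 2\tr((P+Q)^2) - \tr(Q^2)$ yields
\[
\norm{XX^T - ZZ^T}_F^2 = 2\tr(\Lambda Q) + 2\tr((P+Q)^2) - \tr(Q^2).
\]
Only $\tr((P+Q)^2)$ depends on $P$. Writing $R := P + \Lambda$, minimizing $\tr((P+Q)^2) = \norm{R - (\Lambda - Q)}_F^2$ over $R \succeq 0$ is a Euclidean projection onto the PSD cone, whose minimum value is $\tr((Q - \Lambda)_+^2)$, where $(\cdot)_+$ denotes the symmetric-matrix positive part. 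Hence
\[
\norm{XX^T - ZZ^T}_F^2 \ge 2\tr(\Lambda Q) + 2\tr((Q - \Lambda)_+^2) - \tr(Q^2).
\]

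It then remains to prove this lower bound is at least $2(\sqrt 2 - 1)\lambda_r(\Lambda)\tr(Q)$. When $\Lambda$ and $Q$ commute (simultaneously diagonalizable), the bound decouples into the scalar inequality
\[
2\lambda q + 2\max(0,q-\lambda)^2 - q^2 \ge 2(\sqrt 2 - 1)\lambda_r(\Lambda)\, q \qquad (\lambda \ge \lambda_r(\Lambda),\; q \ge 0),
\]
which is easy when $q \le \lambda$ (the left side equals $q(2\lambda - q) \ge q\lambda \ge q\lambda_r(\Lambda) \ge 2(\sqrt 2 - 1)\lambda_r(\Lambda)\, q$) and, in the binding regime $q > \lambda = \lambda_r(\Lambda)$, reduces after simplification to the perfect square $(q - \sqrt 2\,\lambda_r(\Lambda))^2 \ge 0$. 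The tight configuration $\lambda = \lambda_r(\Lambda)$, $q = \sqrt 2\,\lambda_r(\Lambda)$ recovers the familiar extremal pair (e.g., $Z \perp X$ with $\norm{X}^2 = (\sqrt 2 - 1)\norm{Z}^2$ in the rank-$1$ case), confirming that the constant $2(\sqrt 2 - 1)$ cannot be improved.

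\textbf{Main obstacle.} The technical crux is justifying the scalar decomposition when $\Lambda$ and $Q$ do not commute. I would handle this either by rotating $\RR^r$ so that $\Lambda$ is diagonal and tracking the off-diagonal contributions of $Q$ through Weyl/Lidskii-type eigenvalue inequalities on $(Q-\Lambda)_+$, or by a majorization/rearrangement argument showing that the minimum of the above lower bound, subject to fixed $\tr(Q)$ and fixed $\lambda_r(\Lambda)$, is attained at a $Q$ that commutes with $\Lambda$ and concentrates its mass on the $\lambda_r(\Lambda)$-eigenspace, so that the worst case is exactly the scalar problem already analyzed.
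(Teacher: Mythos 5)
First, a point of reference: the paper does not prove this lemma at all --- it imports it verbatim from \cite{BNS2016} --- so there is no in-paper argument to compare against, and your proposal must stand on its own. Up to the point you flag, it does: the expansion $\norm{XX^T-ZZ^T}_F^2=2\tr(\Lambda Q)+2\tr(P^2)+4\tr(PQ)+\tr(Q^2)$ is correct (symmetry of $P=Z^TD$ follows from $Z^TX=X^TZ$), the completion of the square is correct, the relaxation of $P$ to a free symmetric matrix with $P+\Lambda\succeq 0$ is legitimate since the true $P$ is feasible, the projection onto the PSD cone does give $\min_{R\succeq 0}\norm{R-(\Lambda-Q)}_F^2=\tr\bigl(((Q-\Lambda)_+)^2\bigr)$, and the scalar analysis in the commuting case, including the extremal pair $q=\sqrt2\,\lambda_r(\Lambda)$, checks out. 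The genuine gap is exactly the one you name and do not close: nothing establishes $2\tr(\Lambda Q)+2\tr\bigl(((Q-\Lambda)_+)^2\bigr)-\tr(Q^2)\ge 2(\sqrt2-1)\lambda_r(\Lambda)\tr(Q)$ when $\Lambda$ and $Q$ do not commute. Your second proposed remedy --- that the minimum is attained at a $Q$ commuting with $\Lambda$ and concentrated on its bottom eigenspace --- is itself an unproven variational claim, not a proof. As written, the argument is incomplete.

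The good news is that your relaxation loses nothing and the gap is closable along the lines of your first suggestion. Set $S=Q-\Lambda$ and note the identity $2\tr(\Lambda Q)+2\tr(S_+^2)-\tr(Q^2)=\tr(\Lambda^2)+\tr(S_+^2)-\tr(S_-^2)$, so with $\ell_1\ge\dots\ge\ell_r$ the eigenvalues of $\Lambda$ the target becomes
\[
\tr(\Lambda^2)+\tr(S_+^2)-\tr(S_-^2)\;\ge\;2(\sqrt2-1)\,\ell_r\bigl(\tr(\Lambda)+\tr(S_+)-\tr(S_-)\bigr).
\]
The constraint $Q\succeq 0$, i.e.\ $S\succeq-\Lambda$, compressed to the negative eigenspace of $S$ and combined with Cauchy interlacing, gives that the decreasingly ordered eigenvalues $t_1\ge\dots\ge t_k>0$ of $S_-$ satisfy $t_j\le\ell_j$. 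Then for $j\le k$,
\[
\ell_j^2-t_j^2-2(\sqrt2-1)\ell_r(\ell_j-t_j)\ \ge\ (\ell_j-t_j)\bigl(\ell_j-2(\sqrt2-1)\ell_r\bigr)\ \ge\ 0,
\]
for $j>k$ the term $\ell_j\bigl(\ell_j-2(\sqrt2-1)\ell_r\bigr)\ge(3-2\sqrt2)\ell_r^2$, and each of the $m\le r-k$ positive eigenvalues $\mu$ of $S$ contributes $\mu^2-2(\sqrt2-1)\ell_r\mu\ge-(3-2\sqrt2)\ell_r^2$; summing gives nonnegativity. If you replace your two sketched alternatives with this (or an equivalent) interlacing argument, the proof is complete.
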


\par\noindent{\bf Proof of Theorem~\ref{thm:local}\ }
Let $Z \in \RR^{n \times r}$ be a global minimizer of \eqref{eq:nonlinearrec} with $ZZ^T=M^*$. To prove by contradiction, assume that there exists a spurious local minimizer $X \in \RR^{n \times r}$ satisfying $XX^T \neq ZZ^T$ and \eqref{eq:localcond}. Let $X^TZ=UDV^T$ be the singular value decomposition of $X^TZ$, and define the orthogonal matrix $R=UV^T$. Therefore, the matrix
\[
(XR)^TZ=VU^TUDV^T=VDV^T \succeq 0.
\]
Furthermore, it is straightforward to verify that $XR$ is also a spurious local minimizer satisfying \eqref{eq:localcond}, so $\delta \geq \delta_f(XR,Z)$. On the other hand, applying Lemma~\ref{lem:deltaminfoc} (to be stated below) on the local minimizer $XR$ and the global minimizer $Z$ gives rise to the inequality
\[
\delta_f(XR,Z) \geq \sqrt{1-\frac{3+2\sqrt 2}{4}\epsilon^2},
\]
which is a contradiction.
\hfill\BlackBox\\[2mm]

\begin{lemma}\label{lem:deltaminfoc}
Let $X,Z \in \RR^{n \times r}$ such that $M^*=ZZ^T$, $XX^T \neq ZZ^T$, $X^TZ$ is a positive semidefinite matrix, and \eqref{eq:localcond} is satisfied for some $\epsilon \in (0,2(\sqrt 2-1)]$. Then,
\[
\delta_f(X,Z) \geq \sqrt{1-\frac{3+2\sqrt 2}{4}\epsilon^2}.
\]
\end{lemma}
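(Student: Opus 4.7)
The plan is to mirror the dual-based argument of Lemma~\ref{lem:deltamin}, adapted to the simpler semidefinite program \eqref{eq:ripsdpfoc} in which the second-order constraint has been dropped. First I would rescale the primal to convert a lower bound on $\delta_f(X,Z)$ into an upper bound on
\[
\eta_f(X,Z) = \max_{\mathbf H = \mathbf H^T}\bigl\{\eta : \mathbf X^T\mathbf H\mathbf e = 0,\ \eta I_{n^2} \preceq \mathbf H \preceq I_{n^2}\bigr\},
\]
via $\delta_f(X,Z) \geq (1 - \eta_f(X,Z))/(1 + \eta_f(X,Z))$. This follows exactly as in Lemma~\ref{lem:deltamin}, because $((1-\delta)/(1+\delta),\mathbf H/(1+\delta))$ is feasible for the $\eta$ program whenever $(\delta,\mathbf H)$ is feasible for \eqref{eq:ripsdpfoc}.

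Next, I would form the Lagrangian dual of the $\eta$ program, with multipliers $y \in \RR^{nr}$ for the equality constraint and positive semidefinite matrices $U_1,U_2$ for the two operator inequalities. Stationarity in $\eta$ yields $\tr(U_1)=1$, while stationarity in the symmetric variable $\mathbf H$ forces $U_1 - U_2 = M$, where $M = (\mathbf X y\mathbf e^T + \mathbf e y^T\mathbf X^T)/2$. The dual therefore reduces to the minimization of $1 - \tr(M)$ over $y$ subject to the existence of a trace-one positive semidefinite $U_1 \succeq M$, which is exactly $\tr([M]_+) \leq 1$. Since $M$ has rank at most two, Lemma~\ref{lem:eigangle} gives
\[
\tr([M]_\pm) = \tfrac{1}{2}\norm{\mathbf X y}\norm{\mathbf e}(1 \pm \cos\theta),
\]
where $\theta$ is the angle between $\mathbf X y$ and $\mathbf e$. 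Saturating the constraint and optimizing $y$ drives $\cos\theta$ up to $\rho := \norm{P\mathbf e}/\norm{\mathbf e}$, where $P$ denotes orthogonal projection onto the column span of $\mathbf X$. Weak duality then gives $\eta_f(X,Z) \leq (1-\rho)/(1+\rho)$, from which $\delta_f(X,Z) \geq \rho$.

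The final step is to lower bound $\rho$ using the geometric hypotheses. Setting $D = X - Z$, the elementary identity $XX^T - ZZ^T = XD^T + DX^T - DD^T$ gives $\mathbf e = \mathbf X\vect(D) - \vect(DD^T)$, so the component of $\mathbf e$ orthogonal to the range of $\mathbf X$ has norm at most $\norm{DD^T}_F \leq \norm{D}_F^2$. Since $X^T Z \succeq 0$, Lemma~\ref{lem:errorlowerbound} yields $\norm{D}_F^2 \leq \norm{\mathbf e}^2/(2(\sqrt 2 - 1)\lambda_r(M^*))$, and combining with \eqref{eq:localcond} gives
\[
1 - \rho^2 \leq \frac{\norm{D}_F^4}{\norm{\mathbf e}^2} \leq \frac{\norm{\mathbf e}^2}{4(\sqrt 2 - 1)^2 \lambda_r(M^*)^2} \leq \frac{\epsilon^2}{4(\sqrt 2 - 1)^2} = \frac{3+2\sqrt 2}{4}\epsilon^2,
\]
where the final equality uses $(\sqrt 2 - 1)^{-2} = 3 + 2\sqrt 2$. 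The cap $\epsilon \leq 2(\sqrt 2 - 1)$ keeps the right-hand side at most one, so the target square root is well defined.

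I expect the main obstacle to be the dual analysis in the second paragraph: writing the Lagrangian carefully with symmetric $\mathbf H$, cleanly eliminating the slack variables, and recognizing that the fractional projection $\rho$ of $\mathbf e$ onto the column span of $\mathbf X$ is the geometric invariant governing the bound. Once $\rho$ is isolated, the remainder is a short algebraic computation built on the identity for $\mathbf e$ and Lemma~\ref{lem:errorlowerbound}.
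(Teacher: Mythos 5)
Your proposal is correct and follows the same overall strategy as the paper: pass from $\delta_f$ to $\eta_f$ via the $(1-\delta)/(1+\delta)$ rescaling, dualize the SDP \eqref{eq:etaoptfoc} to the problem \eqref{eq:etafocdual}, build a rank-two dual certificate $(\mathbf Xy)\mathbf e^T+\mathbf e(\mathbf Xy)^T$, evaluate its positive and negative traces with Lemma~\ref{lem:eigangle}, and finish with Lemma~\ref{lem:errorlowerbound} and \eqref{eq:localcond}. Where you differ is in how the angle is controlled: you take the best possible $y$, namely the one with $\mathbf Xy$ equal to the orthogonal projection of $\mathbf e$ onto the range of $\mathbf X$, and bound the orthogonal residual through the identity $XX^T-ZZ^T=XD^T+DX^T-DD^T$ with $D=X-Z$, giving $\norm{\mathbf e-\mathbf X\vect D}\le\norm{D}_F^2$. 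The paper instead fixes an explicit candidate $Y=\tfrac{1-c_1^2}{2}X-c_1c_2W$ from the decomposition $Z=c_1X+c_2W$, and must invoke the Wielandt--Hoffman bound $\lambda_r(XX^T)\ge(1-\epsilon)\lambda_r(M^*)$ together with the symmetry of $X^TY$ to certify $\mathbf Xy\neq 0$. Your projection choice dominates any particular feasible $y$, yields the same residual bound $\norm{X-Z}_F^2$, and lets you drop the eigenvalue estimate and the auxiliary $W$ entirely, so the execution is somewhat cleaner; your reduction of the dual to ``$\tr([M]_+)\le 1$'' and the saturation argument are also sound. Two degenerate cases should still be mentioned to make the argument airtight: if $\lambda_r(M^*)=0$ the hypotheses are vacuous (the paper notes this explicitly, and your chain divides by $\lambda_r(M^*)$), and if $P\mathbf e=0$ (so $\rho=0$) the certificate degenerates because $\mathbf Xy=0$ gives only the trivial bound $\eta_f\le 1$; however, your own inequality $1-\rho^2\le\frac{3+2\sqrt 2}{4}\epsilon^2$ shows this can occur only at $\epsilon=2(\sqrt 2-1)$, where the claimed lower bound is $0$ and holds trivially since $\delta_f(X,Z)\ge 0$.
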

\begin{proof}
The statement is obviously true when $\lambda_r(M^*)=0$. If $\lambda_r(M^*)>0$, by the Wielandt-Hoffman theorem (see \cite{Wilkinson1970}), one can write
\[
\abs{\lambda_r(XX^T)-\lambda_r(M^*)} \leq \norm{XX^T-M^*}_F \leq \epsilon\lambda_r(M^*),
\]
which implies that
\begin{equation}\label{eq:eigbound1}
\lambda_r(XX^T) \geq (1-\epsilon)\lambda_r(M^*)>0.
\end{equation}

Decompose $Z$ as $Z=c_1X+c_2W$ for some scalars $c_1$ and $c_2$, where $W \in \RR^{n \times r}$ is a matrix satisfying $\norm{W}_F=1$ and $\langle X,W \rangle=0$. Then,
\[
XX^T-ZZ^T=(1-c_1^2)XX^T-c_1c_2(XW^T+WX^T)-c_2^2WW^T.
\]
We then choose
\begin{equation}\label{eq:ydef}
Y=\frac{1-c_1^2}{2}X-c_1c_2W, \quad y=\vect Y.
\end{equation}
Since $Y$ can be written as a linear combination of $X$ and $Z$ while $X^TZ$ is symmetric by assumption, $X^TY$ is also symmetric and hence $\tr(X^TY)^2 \geq 0$. Now,
\begin{align*}
\norm{\mathbf Xy}^2&=\norm{XY^T+YX^T}_F^2 \\
&=2\tr(X^TXY^TY)+\tr(X^TY)^2+\tr(Y^TX)^2 \\
&\geq 2\tr(X^TXY^TY) \\
&\geq 2\lambda_r(X^TX)\tr(Y^TY) \\
&=2\lambda_r(XX^T)\norm{y}^2.
\end{align*}
Note that \eqref{eq:ydef} and $XX^T \neq ZZ^T$ imply that $y \neq 0$, which together with \eqref{eq:eigbound1} and the above inequality further concludes that $\mathbf Xy \neq 0$. Moreover,
\begin{align*}
\norm{\mathbf e-\mathbf Xy}&=\norm{XX^T-ZZ^T-XY^T-YX^T}_F=c_2^2\norm{WW^T}_F \leq c_2^2, \\
\norm{X-Z}_F^2&=(1-c_1)^2\norm{X}_F^2+c_2^2 \geq c_2^2.
\end{align*}
Let $\theta$ be the angle between $\mathbf e$ and $\mathbf Xy$. It follows from Lemma~\ref{lem:errorlowerbound} that
\[
\sin\theta \leq \frac{\norm{\mathbf e-\mathbf Xy}}{\norm{\mathbf e}} \leq \frac{\norm{X-Z}_F^2}{\norm{XX^T-M^*}_F} \leq \frac{\norm{XX^T-M^*}_F}{2(\sqrt 2-1)\lambda_r(M^*)} \leq \frac{\epsilon}{2(\sqrt 2-1)} \leq 1.
\]
Therefore, $\theta<\pi/2$ and
\begin{equation}\label{eq:focineq2}
\cos\theta \geq \sqrt{1-\frac{3+2\sqrt 2}{4}\epsilon^2}.
\end{equation}

Define $\eta_f(X,Z)$ as the optimal value of the optimization problem
\begin{equation}\label{eq:etaoptfoc}
\begin{aligned}
\max_{\eta,\mathbf H} \quad & \eta \\
\st \quad & \mathbf X^T\mathbf H\mathbf e=0, \\
& \eta I_{n^2} \preceq \mathbf H \preceq I_{n^2}.
\end{aligned}
\end{equation}
Similar to the proof of Lemma~\ref{lem:deltamin}, it holds that
\begin{equation}\label{eq:etaineqfoc}
\eta_f(X,Z) \geq \frac{1-\delta_f(X,Z)}{1+\delta_f(X,Z)},
\end{equation}
and it is sufficient to upper bound $\eta_f(X,Z)$ through finding a feasible solution to the dual problem of \eqref{eq:etaoptfoc} given by
\begin{equation}\label{eq:etafocdual}
\begin{aligned}
\min_{U_1,U_2,y} \quad & \tr(U_2), \\
\st \quad & \tr(U_1)=1, \\
& (\mathbf Xy)\mathbf e^T+\mathbf e(\mathbf Xy)^T=U_1-U_2, \\
& U_1 \succeq 0, \quad U_2 \succeq 0.
\end{aligned}
\end{equation}

Let
\[
M=(\mathbf Xy)\mathbf e^T+\mathbf e(\mathbf Xy)^T,
\]
in which $y$ is defined by \eqref{eq:ydef}. The matrix $M$ can be decomposed as
\[
M=[M]_+-[M]_-,
\]
where $[M]_+ \succeq 0$ and $[M]_- \succeq 0$. By Lemma~\ref{lem:eigangle},
\begin{align*}
\tr([M]_+)&=\norm{\mathbf e}\norm{\mathbf Xy}(1+\cos\theta), \\
\tr([M]_-)&=\norm{\mathbf e}\norm{\mathbf Xy}(1-\cos\theta).
\end{align*}
Therefore,
\begin{gather*}
U_1^*=\frac{[M]_+}{\tr([M]_+)}, \quad U_2^*=\frac{[M]_-}{\tr([M]_+)}, \quad y^*=\frac{y}{\tr([M]_+)}
\end{gather*}
form a feasible solution to the dual problem \eqref{eq:etafocdual}, which implies that
\[
\eta_f(X,Z) \leq \frac{\tr([M]_-)}{\tr([M]_+)}=\frac{1-\cos\theta}{1+\cos\theta}.
\]
This gives the desired result after combining it with \eqref{eq:focineq2} and \eqref{eq:etaineqfoc}.
\end{proof}

\section{Application: 1-bit Matrix Completion}\label{sec:app}

To demonstrate the effectiveness of the developed conditions for the absence of spurious solutions, in this section we will study the 1-bit matrix completion problem. This is a low-rank matrix recovery problem with nonlinear measurements that naturally arises in applications such as recommendation systems in which each user provides binary (like/dislike) observations (see \cite{DPBW2014,GPY2019}). In this problem, there is an unknown ground truth matrix $M^* \in \RR^{n \times n}$ with $M^* \succeq 0$ and $\rank(M^*)=r$. One is allowed to take independent measurements on each entry $M^*_{ij}$, where each measurement value is a binary random variable whose distribution is given by
\[
Y_{ij}=\begin{dcases*}
1 & with probability $\sigma(M^*_{ij})$, \\
0 & with probability $1-\sigma(M^*_{ij})$.
\end{dcases*}
\]
Here, $\sigma(x)$ is commonly chosen to be the sigmoid function $\ee^x/(\ee^x+1)$. Note that $M^*_{ij}$ is an arbitrary real number while the measurements $Y_{ij}$ are restricted to the binary choices 0 and 1. After a large number of measurements are taken, let $y_{ij}$ be the percentage of the measurements on the $(i,j)$th entry that are equal to 1, for every $i,j \in \{1,\dots,n\}$. To recover the ground truth matrix, consider its maximum likelihood estimator. The log-likelihood function is given by
\[
\sum_{i=1}^n\sum_{j=1}^n(y_{ij}\log(\sigma(M_{ij}))+(1-y_{ij})\log(1-\sigma(M_{ij})))=\sum_{i=1}^n\sum_{j=1}^n(y_{ij}M_{ij}-\log(1+\ee^{M_{ij}})).
\]
Therefore, the 1-bit matrix completion problem can be formulated as an optimization problem in the form \eqref{eq:nonlinearrec} with
\[
f(M)=-\sum_{i=1}^n\sum_{j=1}^n(y_{ij}M_{ij}-\log(1+\ee^{M_{ij}})).
\]
Our goal is to use Theorem~\ref{thm:local} to provide a local guarantee for the absence of spurious local minima for the above problem by showing that there is no spurious local minimizer $X \in \RR^{n \times r}$ such that $XX^T \in \bar{\mathcal B}(M^*,R)$, where
\[
\bar{\mathcal B}(M^*,R)=\{M \in \RR^{n \times n} | \norm{M-M^*}_F \leq R\}.
\]

\begin{figure}[!t]
\centering
\subfloat[$n=20$, $r=2$]{\includegraphics[width=6cm]{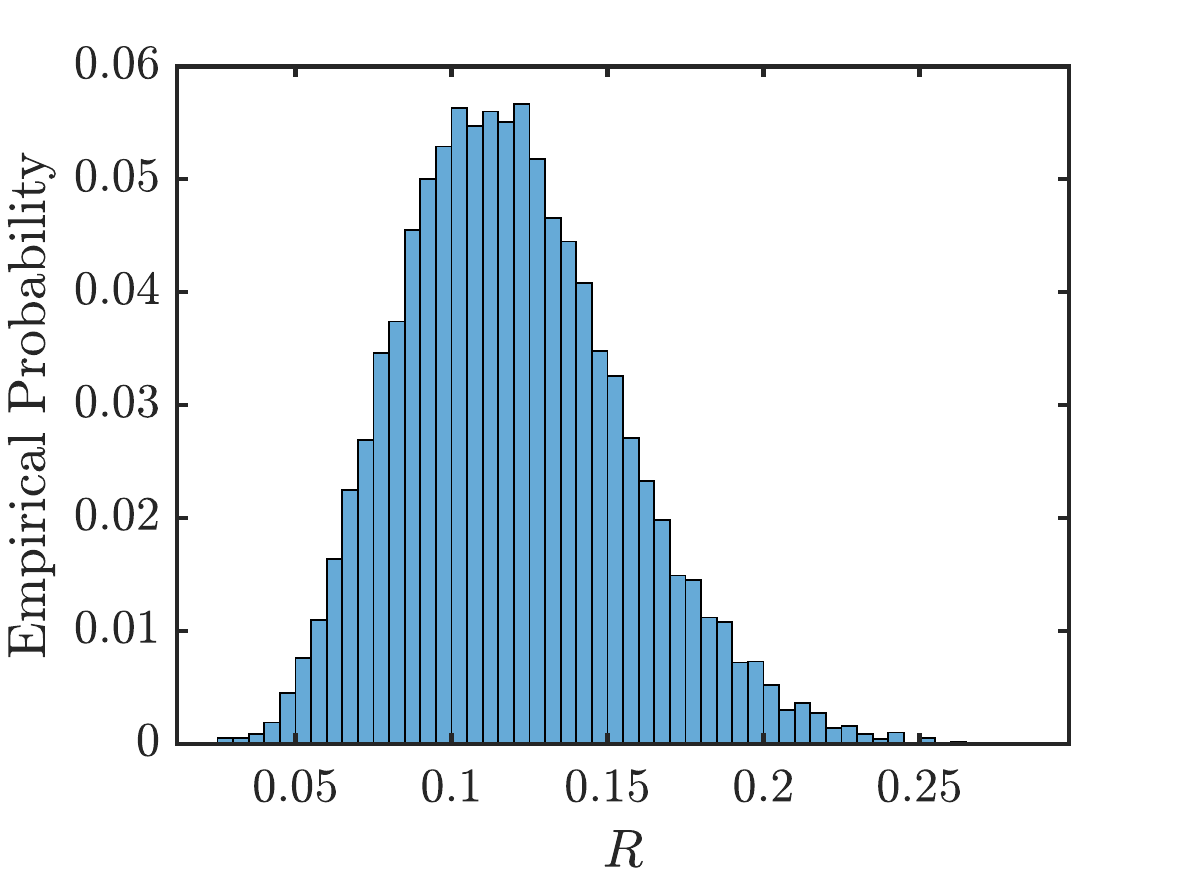}}\quad
\subfloat[$n=20$, $r=10$]{\includegraphics[width=6cm]{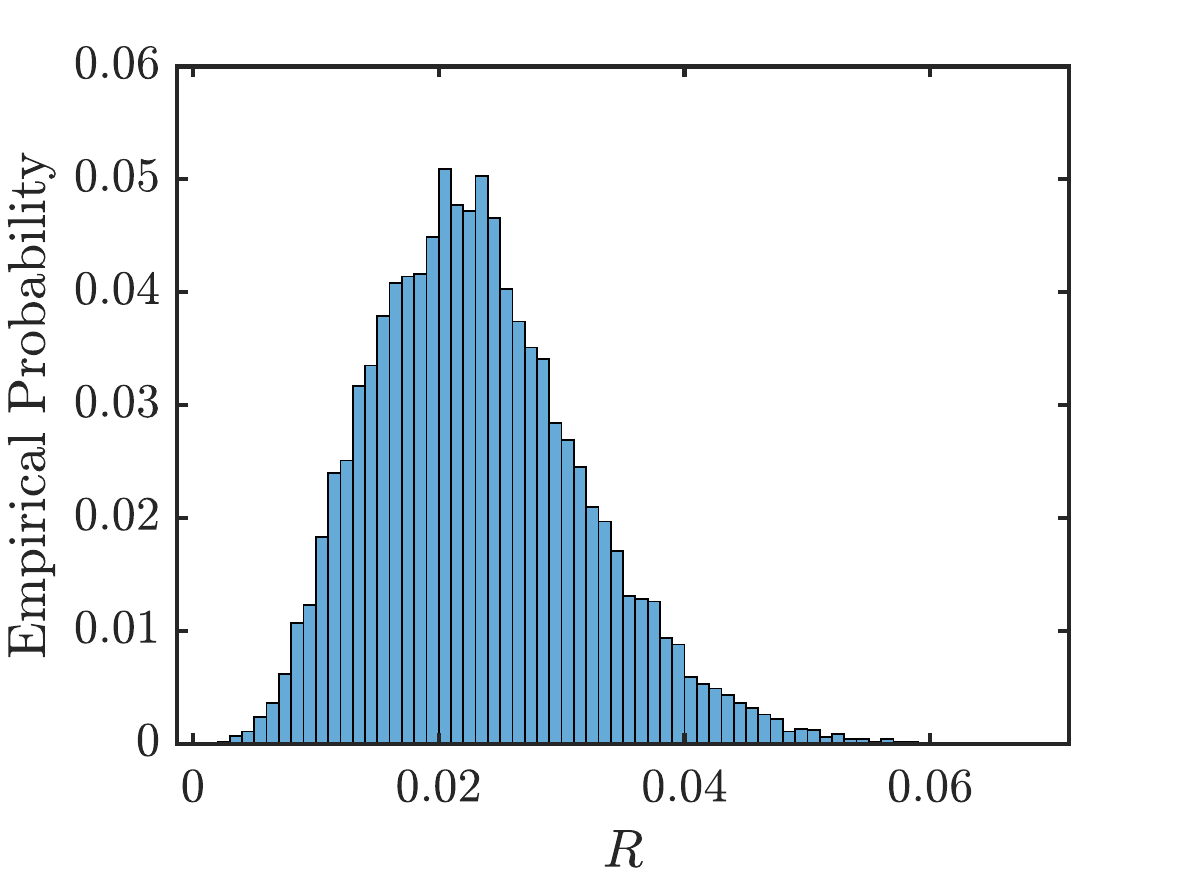}}\\
\subfloat[$n=100$, $r=2$]{\includegraphics[width=6cm]{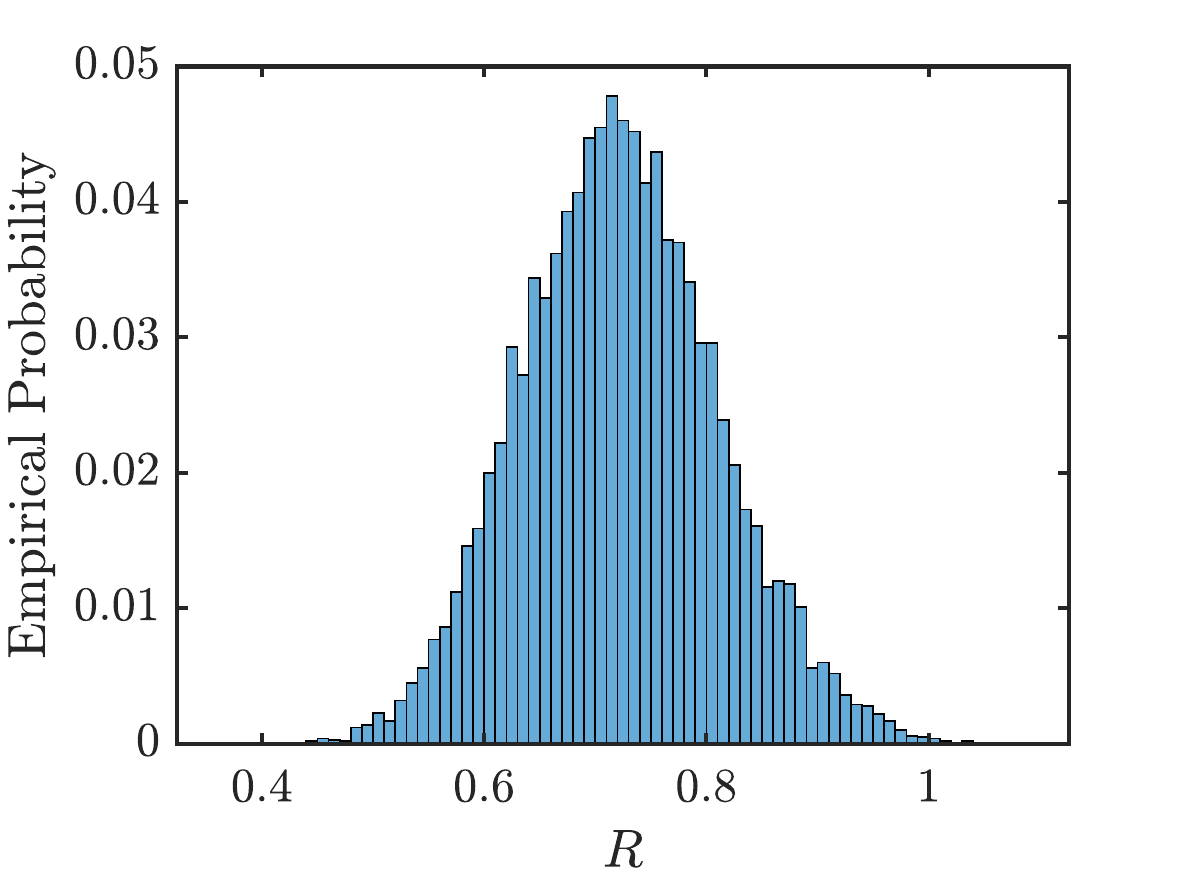}}\quad
\subfloat[$n=100$, $r=10$]{\includegraphics[width=6cm]{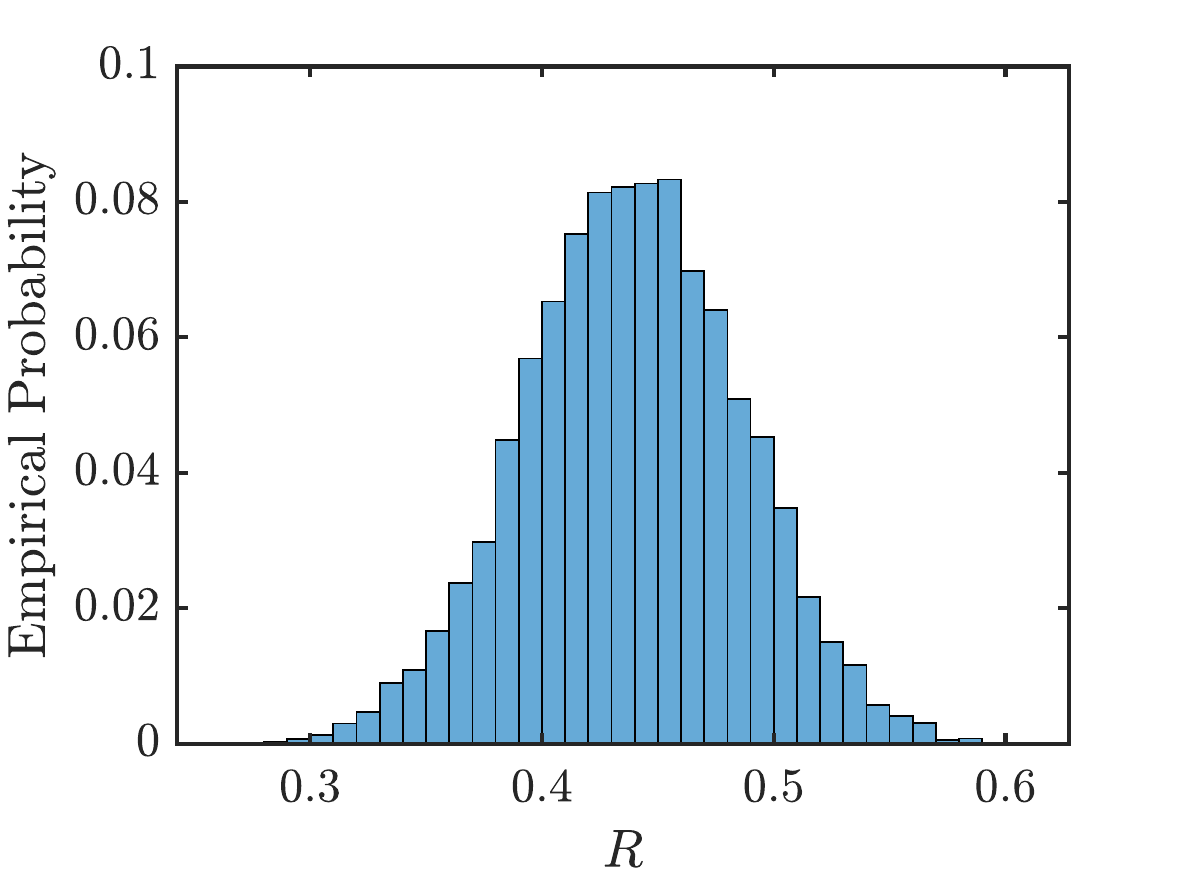}}
\caption{The empirical distribution of the radius $R$ of the neighborhood $\bar{\mathcal B}(M^*,R)$ that is guaranteed to be free of spurious local minima due to Theorem~\ref{thm:local}, for randomly generated ground truth matrices $M^*$ with different sizes $n$ and ranks $r$.}\label{fig:radius}
\end{figure}

Direct computation shows that
\begin{equation}\label{eq:onebit}
[\nabla^2f(M)](K,L)=\sum_{i=1}^n\sum_{j=1}^n\sigma'(M_{ij})K_{ij}L_{ij}, \quad \forall M,K,L \in \RR^{n \times n}.
\end{equation}
For each entry $M^*_{ij}$, define
\[
\underline M^*_{ij}=\max\{\abs{M_{ij}^*}-R,0\}, \quad \overline M^*_{ij}=\abs{M_{ij}^*}+R.
\]
Since $\sigma'(x)$ is an even function that is decreasing on the region $x \geq 0$, if we let
\[
m_1=\sigma'\left(\min_{i,j}\underline M^*_{ij}\right), \quad m_2=\sigma'\left(\max_{i,j}\overline M^*_{ij}\right),
\]
then \eqref{eq:onebit} implies that
\[
m_2\norm{K}_F^2 \leq [\nabla^2f(M)](K,K) \leq m_1\norm{K}_F^2, \quad \forall M \in \bar{\mathcal B}(M^*,R), \: K \in \RR^{n \times n}.
\]
The above inequality shows that $\gamma f(\cdot)$ satisfies $\delta$-$\RIP_{2r}$ on the region $\bar{\mathcal B}(M^*,R)$ with
\[
\gamma=\frac{2}{m_1+m_2}, \quad \delta=\frac{m_1-m_2}{m_1+m_2}.
\]
Therefore, by Theorem~\ref{thm:local}, there is no spurious local minimizer $X$ satisfying $XX^T \in \bar{\mathcal B}(M^*,R)$ as long as $R$ is sufficiently small to satisfy the inequality
\begin{equation}\label{eq:onebitbound}
\frac{m_1-m_2}{m_1+m_2}<\sqrt{1-\frac{3+2\sqrt 2}{4}\left(\frac{R}{\lambda_r(M^*)}\right)^2}.
\end{equation}
In the $r=1$ case, if we let
\[
m_3=\max_{i,j}(\sigma'(\underline M^*_{ij})-\sigma'(\overline M^*_{ij})),
\]
then \eqref{eq:onebit} also implies that
\begin{align*}
\abs{[\nabla^2f(M)-\nabla^2f(M')](K,L)} &\leq m_3\sum_{i=1}^n\sum_{j=1}^n\abs{K_{ij}L_{ij}} \\
&\leq m_3\norm{K}_F\norm{L}_F, \quad \forall M,M' \in \bar{\mathcal B}(M^*,R), \: K,L \in \RR^{n \times n},
\end{align*}
so $\gamma f(\cdot)$ satisfies $\kappa$-$\BDP_2$ on the region $\bar{\mathcal B}(M^*,R)$ with $\kappa=\gamma m_3$, and then Theorem~\ref{thm:main} can be applied similarly to obtain a possibly stronger result.

To illustrate the superiority of our result over previous ones, consider a simple special case in which $r=2$ and
\[
M^*=\diag(2,2,0,\dots,0).
\]
In this case, the inequality \eqref{eq:onebit} becomes
\[
\sigma'(2)\norm{K}_F^2 \leq [\nabla^2f(M^*)] \leq \sigma'(0)\norm{K}_F^2,
\]
which implies that $\gamma\nabla^2f(M^*)$ satisfies the $\delta$-$\RIP_4$ property for
\[
\gamma=\frac{2}{\sigma'(0)+\sigma'(2)}, \quad \delta=\frac{\sigma'(0)-\sigma'(2)}{\sigma'(0)+\sigma'(2)} \approx 0.41>1/5.
\]
In addition, it can be observed that the above choice of $\gamma$ is the best to minimize $\delta$. As a result, the existing bound given in \cite{LZT2019} cannot certify the absence of spurious local minima in the region $\bar{\mathcal B}(M^*,R)$ no matter how small $R$ is. The reason is that the function $f(\cdot)$ (after scaling) cannot satisfy the $\delta$-$\RIP_4$ property in any local neighborhood of $M^*$. On the other hand, the above discussion based on our Theorem~\ref{thm:local} shows that the problem has no spurious local minima in $\bar{\mathcal B}(M^*,R)$ as long as $R$ satisfies \eqref{eq:onebitbound}. Solving the inequality \eqref{eq:onebitbound} gives $R<1.14$.

For an arbitrary ground truth matrix $M^*$, one can perform a binary search to find the largest $R$ such that the inequality \eqref{eq:onebitbound} is satisfied and thus conclude that the problem has no spurious local minima in the neighborhood $\bar{\mathcal B}(M^*,R)$. For different sizes $n$ and ranks $r$, Figure~\ref{fig:radius} plots the empirical distribution of the radius $R$ of such neighborhood for $10^4$ random samples of $M^*=ZZ^T$ in which each entry of $Z$ is independently generated from the normal distribution with mean 0 and standard deviation 0.1.

\section{Conclusion}

In this paper, we first propose the bounded difference property (BDP) in order to study the symmetric low-rank matrix recovery problem with nonlinear measurements. The relationship between BDP and RIP is thoroughly investigated. Then, two novel criteria for the local and global nonexistence of spurious local minima are proposed. It is shown that the developed criteria are superior to the existing conditions relying solely on RIP. In particular, this work offers the first result in the literature on the nonexistence of spurious solutions in a local region for the low-rank matrix recovery problems with nonlinear measurements.

\acks{This work was supported by grants from ARO, ONR, AFOSR and NSF.}

\appendix

\section{Proof of Lemma~\ref{lem:relax}}\label{sec:app2}

Let $\OPT(X,Z)$ denote the optimal value of the optimization problem
\begin{equation}\label{eq:opt}
\begin{aligned}
\min_{\delta,\mathbf H} \quad & \delta \\
\st \quad & \norm{\mathbf X^T\mathbf H\mathbf e} \leq a, \\
& 2I_r \otimes \mat(\mathbf H\mathbf e)+\mathbf X^T\mathbf H\mathbf X \succeq -bI_{nr}, \\
& \text{$\mathbf H$ is symmetric and satisfies $\delta$-$\RIP_{2r}$},
\end{aligned}
\end{equation}
and $\LMI(X,Z)$ denote the optimal value of the optimization problem
\begin{equation}\label{eq:lmi}
\begin{aligned}
\min_{\delta,\mathbf H} \quad & \delta \\
\st \quad & \begin{bmatrix}
I_{nr} & \mathbf X^T\mathbf H\mathbf e \\
(\mathbf X^T\mathbf H\mathbf e)^T & a^2
\end{bmatrix} \succeq 0, \\
& 2I_r \otimes \mat(\mathbf H\mathbf e)+\mathbf X^T\mathbf H\mathbf X \succeq -bI_{nr}, \\
& (1-\delta)I_{n^2} \preceq \mathbf H \preceq (1+\delta)I_{n^2}.
\end{aligned}
\end{equation}
As mentioned in Section~\ref{sec:nospurious}, the first constraint in \eqref{eq:opt} and the first constraint in \eqref{eq:lmi} are interchangeable. Our goal is to prove that $\OPT(X,Z)=\LMI(X,Z)$ for given $X,Z \in \RR^{n \times r}$. Let $(v_1,\dots,v_n)$ be an orthogonal basis of $\RR^n$ such that $(v_1,\dots,v_d)$ spans the column spaces of both $X$ and $Z$. Note that $d \leq 2r$. Let $P \in \RR^{n \times d}$ be the matrix with the columns $(v_1,\dots,v_d)$ and $P_\perp \in \RR^{n \times (n-d)}$ be the matrix with the columns $(v_{d+1},\dots,v_n)$. Then,
\begin{gather*}
P^TP=I_d, \quad P_\perp^TP_\perp=I_{n-d}, \quad P_\perp^TP=0, \quad P^TP_\perp=0, \\
PP^T+P_\perp P_\perp^T=I_n, \quad PP^TX=X, \quad PP^TZ=Z.
\end{gather*}
Define $\mathbf P=P \otimes P$. Consider the auxiliary optimization problem
\begin{equation}\label{eq:aux}
\begin{aligned}
\min_{\delta,\mathbf H} \quad & \delta \\
\st \quad & \begin{bmatrix}
I_{nr} & \mathbf X^T\mathbf H\mathbf e \\
(\mathbf X^T\mathbf H\mathbf e)^T & a^2
\end{bmatrix} \succeq 0, \\
& 2I_r \otimes \mat(\mathbf H\mathbf e)+\mathbf X^T\mathbf H\mathbf X \succeq -bI_{nr}, \\
& (1-\delta)I_{d^2} \preceq \mathbf P^T\mathbf H\mathbf P \preceq (1+\delta)I_{d^2},
\end{aligned}
\end{equation}
and denote its optimal value as the function $\overline\LMI(X,Z)$. Given an arbitrary symmetric matrix $\mathbf H \in \RR^{n ^2 \times n^2}$, if $\mathbf H$ satisfies the last constraint in \eqref{eq:lmi}, then it obviously satisfies $\delta$-$\RIP_{2r}$ and subsequently the last constraint in \eqref{eq:opt}. On the other hand, if $H$ satisfies the last constraint in \eqref{eq:opt}, for every matrix $Y \in \RR^{d \times d}$ with $\mathbf Y=\vect Y$, since $\rank(PYP^T) \leq d \leq 2r$ and $\vect(PYP^T)=\mathbf P\mathbf Y$, by $\delta$-$\RIP_{2r}$ property, one arrives at
\[
(1-\delta)\norm{\mathbf Y}^2=(1-\delta)\norm{\mathbf P\mathbf Y}^2 \leq (\mathbf P\mathbf Y)^T\mathbf H\mathbf P\mathbf Y \leq (1+\delta)\norm{\mathbf P\mathbf Y}^2=(1+\delta)\norm{\mathbf Y}^2,
\]
which implies that $\mathbf H$ satisfies the last constraint in \eqref{eq:aux}. The above discussion implies that
\[
\LMI(X,Z) \geq \OPT(X,Z) \geq \overline\LMI(X,Z).
\]
Let
\[
\hat X=P^TX, \quad \hat Z=P^TZ.
\]
Lemma~\ref{lem:ineq1} and Lemma~\ref{lem:ineq2} to be stated later will show that
\[
\LMI(X,Z) \leq \LMI(\hat X,\hat Z) \leq \overline\LMI(X,Z),
\]
which completes the proof of Lemma~\ref{lem:relax}.

Before stating Lemma~\ref{lem:ineq1} and Lemma~\ref{lem:ineq2} that were needed in the proof of Lemma~\ref{lem:relax}, we should first state a preliminary result below.

\begin{lemma}
Define $\hat{\mathbf e}$ and $\hat{\mathbf X}$ in the same way as $\mathbf e$ and $\mathbf X$, except that $X$ and $Z$ are replaced by $\hat X$ and $\hat Z$, respectively. Then, it holds that
\begin{align*}
\mathbf e&=\mathbf P\hat{\mathbf e}, \\
\mathbf X(I_r \otimes P)&=\mathbf P\hat{\mathbf X}, \\
\mathbf P^T\mathbf X&=\hat{\mathbf X}(I_r \otimes P)^T.
\end{align*}
\end{lemma}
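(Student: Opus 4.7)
\medskip

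\noindent\textbf{Proof proposal.} All three identities are direct linear-algebraic computations that rely on the vectorization identity $\vect(AXB^T)=(B\otimes A)\vect X$ (stated in Section~1.3), the mixed-product rule $(A\otimes B)(C\otimes D)=(AC)\otimes(BD)$, and the two projection properties $PP^TX=X$ and $PP^TZ=Z$ (equivalently, $P\hat X=X$ and $P\hat Z=Z$). My plan is to verify each identity in turn.

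For the first identity, I would write
\[
\mathbf P\hat{\mathbf e}=(P\otimes P)\vect(\hat X\hat X^T-\hat Z\hat Z^T)=\vect\!\left(P\hat X\hat X^TP^T-P\hat Z\hat Z^TP^T\right)
\]
and then use $P\hat X=X$ and $P\hat Z=Z$ (so that $\hat X^TP^T=X^T$ and $\hat Z^TP^T=Z^T$) to conclude $\mathbf P\hat{\mathbf e}=\vect(XX^T-ZZ^T)=\mathbf e$.

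For the second identity, I would test both sides on an arbitrary $\hat U\in\RR^{d\times r}$ with $\vect\hat U$. First, $(I_r\otimes P)\vect\hat U=\vect(P\hat U)$ by the vectorization identity, so the left-hand side acting on $\vect\hat U$ gives $\mathbf X\vect(P\hat U)=\vect(X\hat U^TP^T+P\hat U X^T)$ from the definition of $\mathbf X$. For the right-hand side, the definition of $\hat{\mathbf X}$ gives $\hat{\mathbf X}\vect\hat U=\vect(\hat X\hat U^T+\hat U\hat X^T)$, and then
\[
\mathbf P\hat{\mathbf X}\vect\hat U=\vect\!\left(P\hat X\hat U^TP^T+P\hat U\hat X^TP^T\right)=\vect(X\hat U^TP^T+P\hat U X^T),
\]
again using $P\hat X=X$ and $\hat X^TP^T=X^T$. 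The two expressions agree.

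For the third identity, I would similarly apply both sides to an arbitrary $\vect U$ with $U\in\RR^{n\times r}$. The left-hand side yields
\[
\mathbf P^T\mathbf X\vect U=(P^T\otimes P^T)\vect(XU^T+UX^T)=\vect(P^TXU^TP+P^TUX^TP)=\vect(\hat XU^TP+P^TU\hat X^T),
\]
while the right-hand side yields, after using $(I_r\otimes P)^T\vect U=\vect(P^TU)$,
\[
\hat{\mathbf X}\vect(P^TU)=\vect\!\left(\hat X(P^TU)^T+(P^TU)\hat X^T\right)=\vect(\hat XU^TP+P^TU\hat X^T),
\]
which matches. Since these computations are mechanical applications of the standard vectorization identities, I do not anticipate any real obstacle; the only point requiring care is tracking when to substitute $P\hat X=X$ versus $\hat X=P^TX$, but both substitutions are available throughout because the columns of $X$ and $Z$ lie in the range of $P$.
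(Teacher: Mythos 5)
Your proposal is correct and follows essentially the same route as the paper: all three identities are verified via the vectorization identity $\vect(AXB^T)=(B\otimes A)\vect X$ together with $P\hat X=PP^TX=X$ and $P\hat Z=Z$, testing the operator identities on arbitrary $\vect\hat U$ and $\vect U$ exactly as the paper does. The only cosmetic difference is that you expand both sides and match them, whereas the paper rewrites one side into the other; the computations are identical.
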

\begin{proof}
Observe that
\begin{gather*}
\mathbf e=\vect(XX^T-ZZ^T)=\vect(P(\hat X\hat X^T-\hat Z\hat Z^T)P^T)=\mathbf P\hat{\mathbf e}, \\
\begin{aligned}
\mathbf X(I_r \otimes P)\vect\hat U&=\mathbf X\vect(P\hat U)=\vect(X\hat U^TP^T+P\hat UX^T) \\
&=\vect(P(\hat X\hat U^T+\hat U\hat X^T)P^T)=\mathbf P\hat{\mathbf X}\vect\hat U,
\end{aligned}\\
\begin{aligned}
\hat{\mathbf X}(I_r \otimes P)^T\vect U&=\hat{\mathbf X}\vect(P^TU)=\vect(\hat XU^TP+P^TU\hat X^T) \\
&=\vect(P^T(XU^T+UX^T)P)=\mathbf P^T\mathbf X\vect U,
\end{aligned}
\end{gather*}
where $U \in \RR^{n \times r}$ and $\hat U \in \RR^{d \times r}$ are arbitrary matrices.
\end{proof}

\begin{lemma}\label{lem:ineq1}
The inequality $\LMI(\hat X,\hat Z) \geq \LMI(X,Z)$ holds.
\end{lemma}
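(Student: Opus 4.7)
The plan is to show that any feasible point $(\hat\delta, \hat{\mathbf H})$ of the $\LMI(\hat X, \hat Z)$ program lifts to a feasible point of $\LMI(X, Z)$ with the same objective value, which immediately yields $\LMI(X, Z) \leq \LMI(\hat X, \hat Z)$. Before constructing the lift, I would first record that the constants $a$ and $b$ in the two programs agree: from $XX^T = P \hat X \hat X^T P^T$ and $P^T P = I_d$ the nonzero eigenvalues match, giving $\lambda_1(XX^T) = \lambda_1(\hat X \hat X^T)$, and from $\mathbf e = \mathbf P \hat{\mathbf e}$ together with $\mathbf P^T \mathbf P = I_{d^2}$ one obtains $\norm{\mathbf e} = \norm{\hat{\mathbf e}}$.

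Next, choose $\mathbf P_\perp \in \RR^{n^2 \times (n^2 - d^2)}$ so that $[\mathbf P, \mathbf P_\perp]$ is orthogonal, and lift $\hat{\mathbf H}$ to
\[
\mathbf H := \mathbf P \hat{\mathbf H} \mathbf P^T + \mathbf P_\perp \mathbf P_\perp^T.
\]
In the orthonormal basis $[\mathbf P, \mathbf P_\perp]$, the matrix $\mathbf H$ is block-diagonal with blocks $\hat{\mathbf H}$ and $I_{n^2 - d^2}$, so the spectral constraint $(1 - \hat\delta) I_{n^2} \preceq \mathbf H \preceq (1 + \hat\delta) I_{n^2}$ follows from $1 - \hat\delta \leq 1 \leq 1 + \hat\delta$ together with the bound on $\hat{\mathbf H}$. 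For the first LMI constraint, the preceding lemma supplies $\mathbf X^T \mathbf P = (I_r \otimes P)\hat{\mathbf X}^T$ and $\mathbf P^T \mathbf e = \hat{\mathbf e}$, while $\mathbf P_\perp^T \mathbf e = 0$ since $\mathbf e$ lies in the column space of $\mathbf P$; these identities yield $\mathbf H \mathbf e = \mathbf P \hat{\mathbf H} \hat{\mathbf e}$ and hence $\mathbf X^T \mathbf H \mathbf e = (I_r \otimes P)\hat{\mathbf X}^T \hat{\mathbf H} \hat{\mathbf e}$, whose norm equals $\norm{\hat{\mathbf X}^T \hat{\mathbf H} \hat{\mathbf e}} \leq a$ by orthonormality of the columns of $I_r \otimes P$.

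The main obstacle I expect is verifying the second LMI constraint, which requires decomposing a generic $\mathbf U = \vect U \in \RR^{nr}$ according to the splitting $U = P U' + P_\perp U''$ with $U' := P^T U$ and $U'' := P_\perp^T U$. A direct expansion using $X = P \hat X$ shows that $\mathbf X \mathbf U = \mathbf P \hat{\mathbf X} \mathbf U' + \mathbf w$ for a residual $\mathbf w$ that one checks, using $P^T P_\perp = 0$, satisfies $\mathbf P^T \mathbf w = 0$ and thus lies in the column space of $\mathbf P_\perp$. A short computation also gives $\mat(\mathbf H \mathbf e) = P \mat(\hat{\mathbf H} \hat{\mathbf e}) P^T$, and hence $\mathbf U^T (I_r \otimes \mat(\mathbf H \mathbf e)) \mathbf U = (\mathbf U')^T (I_r \otimes \mat(\hat{\mathbf H} \hat{\mathbf e})) \mathbf U'$. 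Using the block-diagonal structure of $\mathbf H$ in the basis $[\mathbf P, \mathbf P_\perp]$ to kill the cross terms between $\mathbf P \hat{\mathbf X} \mathbf U'$ and $\mathbf w$, the full quadratic form collapses to
\[
\mathbf U^T \bigl(2 I_r \otimes \mat(\mathbf H \mathbf e) + \mathbf X^T \mathbf H \mathbf X\bigr) \mathbf U = (\mathbf U')^T \bigl(2 I_r \otimes \mat(\hat{\mathbf H} \hat{\mathbf e}) + \hat{\mathbf X}^T \hat{\mathbf H} \hat{\mathbf X}\bigr) \mathbf U' + \norm{\mathbf w}^2.
\]
Feasibility of $(\hat\delta, \hat{\mathbf H})$ for $\LMI(\hat X, \hat Z)$ together with $\norm{\mathbf w}^2 \geq 0$ then gives a lower bound of $-b \norm{\mathbf U'}^2 \geq -b \norm{\mathbf U}^2$, proving feasibility of $(\hat\delta, \mathbf H)$ for $\LMI(X, Z)$ and completing the argument.
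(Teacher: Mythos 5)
Your proposal is correct and follows essentially the same route as the paper: the lift $\mathbf H=\mathbf P\hat{\mathbf H}\mathbf P^T+\mathbf P_\perp\mathbf P_\perp^T$ is exactly the paper's $\mathbf P\hat{\mathbf H}\mathbf P^T+(I_{n^2}-\mathbf P\mathbf P^T)$, and your verification of the three constraints matches the paper's, with your quadratic-form computation on $U=PU'+P_\perp U''$ being just a vectorized restatement of the paper's block decomposition of $\mathbf S=2I_r\otimes\mat(\mathbf H\mathbf e)+\mathbf X^T\mathbf H\mathbf X+bI_{nr}$ along $I_r\otimes P$ and $I_r\otimes P_\perp$. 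Your explicit check that the constants $a$ and $b$ coincide for the two programs is a small but welcome addition that the paper leaves implicit.
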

\begin{proof}
Let $(\delta,\hat{\mathbf H})$ be an arbitrary feasible solution to the optimization problem defining $\LMI(\hat X,\hat Z)$ with $\delta \leq 1$. It is desirable to show that $(\delta,\mathbf H)$ with
\[
\mathbf H=\mathbf P\hat{\mathbf H}\mathbf P^T+(I_{n^2}-\mathbf P\mathbf P^T)
\]
is a feasible solution to the optimization problem defining $\LMI(X,Z)$, which directly proves the lemma. To this end, notice that
\[
\mathbf H-(1-\delta)I_{n^2}=\mathbf P(\hat{\mathbf H}-(1-\delta)I_{d^2})\mathbf P^T+\delta(I_{n^2}-\mathbf P\mathbf P^T),
\]
which is positive semidefinite because
\begin{align*}
I_{n^2}-\mathbf P\mathbf P^T&=(PP^T+P_\perp P_\perp^T) \otimes (PP^T+P_\perp P_\perp^T)-(PP^T) \otimes (PP^T) \\
&=(PP^T) \otimes (P_\perp P_\perp^T)+(P_\perp P_\perp^T) \otimes (PP^T)+(P_\perp P_\perp^T) \otimes (P_\perp P_\perp^T) \succeq 0.
\end{align*}
Similarly,
\[
\mathbf H-(1+\delta)I_{n^2} \preceq 0,
\]
and therefore the last constraint in \eqref{eq:lmi} is satisfied and $\mathbf H$ is always positive semidefinite. Next, since
\[
\mathbf X^T\mathbf H\mathbf e=\mathbf X^T\mathbf H\mathbf P\hat{\mathbf e}=\mathbf X^T\mathbf P\hat{\mathbf H}\hat{\mathbf e}=(I_r \otimes P)\hat {\mathbf X}^T\hat{\mathbf H}\hat{\mathbf e},
\]
we have
\[
\norm{\mathbf X^T\mathbf H\mathbf e}^2=(\hat {\mathbf X}^T\hat{\mathbf H}\hat{\mathbf e})^T(I_r \otimes P^T)(I_r \otimes P)(\hat {\mathbf X}^T\hat{\mathbf H}\hat{\mathbf e})=\norm{\hat {\mathbf X}^T\hat{\mathbf H}\hat{\mathbf e}}^2,
\]
and thus the first constraint in \eqref{eq:lmi} is satisfied. Finally, by letting $W \in \RR^{d \times d}$ be the vector satisfying $\vect W=\hat{\mathbf H}\hat{\mathbf e}$, one can write
\[
\vect(PWP^T)=\mathbf P\vect W=\mathbf P\hat{\mathbf H}\hat{\mathbf e}.
\]
Hence,
\begin{align*}
2I_r \otimes \mat(\mathbf H\mathbf e)&=2I_r \otimes \mat(\mathbf H\mathbf P\hat{\mathbf e})=2I_r \otimes \mat(\mathbf P\hat{\mathbf H}\hat{\mathbf e})=I_r \otimes (P(W+W^T)P^T) \\
&=2I_r \otimes (P\mat(\hat{\mathbf H}\hat{\mathbf e})P^T)=2(I_r \otimes P)(I_r \otimes \mat(\hat{\mathbf H}\hat{\mathbf e}))(I_r \otimes P)^T.
\end{align*}
In addition,
\[
\mathbf X^T\mathbf H\mathbf X(I_r \otimes P)=\mathbf X^T\mathbf H\mathbf P\hat{\mathbf X}=\mathbf X^T\mathbf P\hat{\mathbf H}\hat{\mathbf X}=(I_r \otimes P)\hat{\mathbf X}^T\hat{\mathbf H}\hat{\mathbf X}.
\]
Therefore, by defining
\[
\mathbf S:=2I_r \otimes \mat(\mathbf H\mathbf e)+\mathbf X^T\mathbf H\mathbf X+bI_{nr},
\]
we have
\begin{align*}
(I_r \otimes P)^T\mathbf S(I_r \otimes P)&=2I_r \otimes \mat(\hat{\mathbf H}\hat{\mathbf e})+\hat{\mathbf X}^T\hat{\mathbf H}\hat{\mathbf X}+bI_{dr} \succeq 0, \\
(I_r \otimes P_\perp)^T\mathbf S(I_r \otimes P_\perp)&=(I_r \otimes P_\perp)^T\mathbf X^T\mathbf H\mathbf X(I_r \otimes P_\perp)+bI_{(n-d)r} \succeq 0, \\
(I_r \otimes P_\perp)^T\mathbf S(I_r \otimes P)&=0.
\end{align*}
Since the columns of $I_r \otimes P$ and $I_r \otimes P_\perp$ form a basis for $\RR^{nr}$, the above inequalities imply that $\mathbf S$ is positive semidefinite, and thus the second constraint in \eqref{eq:lmi} is satisfied.
\end{proof}

\begin{lemma}\label{lem:ineq2}
The inequality $\overline\LMI(X,Z) \geq \LMI(\hat X,\hat Z)$ holds.
\end{lemma}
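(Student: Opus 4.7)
The plan is to take an arbitrary feasible solution $(\delta,\mathbf H)$ of the program \eqref{eq:aux} defining $\overline\LMI(X,Z)$ and build from it a feasible solution of the program defining $\LMI(\hat X,\hat Z)$ with the same objective value $\delta$. The natural candidate is
\[
\hat{\mathbf H}=\mathbf P^T\mathbf H\mathbf P.
\]
Before checking feasibility I would first record that the constants $a$ and $b$ of \eqref{eq:abdef} are unchanged when $(X,Z)$ is replaced by $(\hat X,\hat Z)$: from $\mathbf e=\mathbf P\hat{\mathbf e}$ and $\mathbf P^T\mathbf P=I_{d^2}$ one obtains $\norm{\hat{\mathbf e}}=\norm{\mathbf e}$, and since the column space of $X$ lies in that of $P$ one has $XX^T=P\hat X\hat X^TP^T$, whose nonzero eigenvalues coincide with those of $\hat X\hat X^T$, giving $\lambda_1(\hat X\hat X^T)=\lambda_1(XX^T)$.

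The three constraints of the $\LMI(\hat X,\hat Z)$ program are then verified by congruence arguments based on the preliminary lemma. The norm bound $(1-\delta)I_{d^2}\preceq\hat{\mathbf H}\preceq(1+\delta)I_{d^2}$ is exactly the third constraint of \eqref{eq:aux} rewritten in terms of $\hat{\mathbf H}$. For the first constraint, taking the transpose of the identity $\mathbf X(I_r\otimes P)=\mathbf P\hat{\mathbf X}$ gives $\hat{\mathbf X}^T\mathbf P^T=(I_r\otimes P)^T\mathbf X^T$, and combined with $\hat{\mathbf H}\hat{\mathbf e}=\mathbf P^T\mathbf H\mathbf e$ one obtains
\[
\hat{\mathbf X}^T\hat{\mathbf H}\hat{\mathbf e}=(I_r\otimes P)^T\mathbf X^T\mathbf H\mathbf e.
\]
Conjugating the block PSD constraint of \eqref{eq:aux} by $\diag(I_r\otimes P,1)$ and using $(I_r\otimes P)^T(I_r\otimes P)=I_{dr}$ then reduces the top-left block $I_{nr}$ to $I_{dr}$ and the off-diagonal block to $\hat{\mathbf X}^T\hat{\mathbf H}\hat{\mathbf e}$, as required.

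For the second constraint, the key identity is $\mat(\hat{\mathbf H}\hat{\mathbf e})=P^T\mat(\mathbf H\mathbf e)P$, which follows by writing $\hat{\mathbf H}\hat{\mathbf e}=\mathbf P^T\mathbf H\mathbf e=\vect(P^TWP)$ where $\vect W=\mathbf H\mathbf e$ and noting that $\mat$ commutes with the sandwich by $P^T$ and $P$. Combined with $\hat{\mathbf X}^T\hat{\mathbf H}\hat{\mathbf X}=(I_r\otimes P)^T\mathbf X^T\mathbf H\mathbf X(I_r\otimes P)$, this yields
\[
2I_r\otimes\mat(\hat{\mathbf H}\hat{\mathbf e})+\hat{\mathbf X}^T\hat{\mathbf H}\hat{\mathbf X}=(I_r\otimes P)^T\bigl[2I_r\otimes\mat(\mathbf H\mathbf e)+\mathbf X^T\mathbf H\mathbf X\bigr](I_r\otimes P),
\]
so conjugating the second constraint of \eqref{eq:aux} by $I_r\otimes P$ gives a lower bound of $-b(I_r\otimes P)^T(I_r\otimes P)=-bI_{dr}$, which is precisely what is needed. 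There is no genuine obstacle: the entire proof is a routine matching between the two programs once the candidate $\hat{\mathbf H}=\mathbf P^T\mathbf H\mathbf P$ and the congruence $I_r\otimes P$ are identified.
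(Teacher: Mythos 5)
Your proof is correct, but it is not the route the paper takes. You argue entirely on the primal side: from any feasible $(\delta,\mathbf H)$ of \eqref{eq:aux} you compress to $\hat{\mathbf H}=\mathbf P^T\mathbf H\mathbf P$ and transfer the three constraints of the program defining $\LMI(\hat X,\hat Z)$ by congruence with the column-orthonormal matrix $I_r\otimes P$, using the identities $\mathbf e=\mathbf P\hat{\mathbf e}$, $\mathbf P\hat{\mathbf X}=\mathbf X(I_r\otimes P)$, $\mat(\hat{\mathbf H}\hat{\mathbf e})=P^T\mat(\mathbf H\mathbf e)P$, and $(I_r\otimes P)^T(I_r\otimes P)=I_{dr}$; you also rightly note the point that actually needs checking, namely that the constants of \eqref{eq:abdef} are unchanged, via $\norm{\hat{\mathbf e}}=\norm{\mathbf e}$ and $\lambda_1(\hat X\hat X^T)=\lambda_1(XX^T)$. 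Since congruence preserves semidefiniteness, every feasible value $\delta$ of \eqref{eq:aux} is feasible for the hatted program, so $\LMI(\hat X,\hat Z)\leq\overline\LMI(X,Z)$ with no attainment or duality issues. The paper instead works on the dual side: it forms the SDP dual \eqref{eq:lmidual} of the program defining $\LMI(\hat X,\hat Z)$, exhibits a strictly feasible point to invoke Slater's condition and strong duality, and then lifts any feasible point of \eqref{eq:lmidual} to a feasible point of \eqref{eq:lmidual2}, the dual of \eqref{eq:aux}, with the same objective, concluding by weak duality. Your argument is more elementary and symmetric with the paper's own proof of Lemma~\ref{lem:ineq1}, which is likewise a primal lifting (in the opposite direction, $\hat{\mathbf H}\mapsto\mathbf P\hat{\mathbf H}\mathbf P^T+(I_{n^2}-\mathbf P\mathbf P^T)$); what the paper's dual route buys is mainly the explicit dual programs, whose structure parallels the dual \eqref{eq:etadual} used later in the bounding arguments, but duality is not needed for this particular inequality.
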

\begin{proof}
The dual problem of the optimization problem defining $\LMI(\hat X,\hat Z)$ can be expressed as
\begin{equation}\label{eq:lmidual}
\begin{aligned}
\max_{\hat U_1,\hat U_2,\hat V,\hat G,\hat\lambda,\hat y} \quad & \tr(\hat U_1-\hat U_2)-\tr(\hat G)-a^2\hat\lambda-b\tr(\hat V) \\
\st \quad & \tr(\hat U_1+\hat U_2)=1, \\
&\sum_{j=1}^r(\hat{\mathbf X}\hat y-\vect\hat V_{j,j})\hat{\mathbf e}^T+\sum_{j=1}^r\hat{\mathbf e}(\hat{\mathbf X}\hat y-\vect\hat V_{j,j})^T-\hat{\mathbf X}\hat V\hat{\mathbf X}^T=\hat U_1-\hat U_2, \\
& \begin{bmatrix}
\hat G & -\hat y \\
-\hat y^T & \hat \lambda
\end{bmatrix} \succeq 0, \\
&\hat U_1 \succeq 0, \quad \hat U_2 \succeq 0, \quad \hat V=\begin{bmatrix}
\hat V_{1,1} & \cdots & \hat V_{r,1} \\
\vdots & \ddots & \vdots \\
\hat V_{r,1}^T & \cdots & \hat V_{r,r}
\end{bmatrix} \succeq 0.
\end{aligned}
\end{equation}
Since
\[
\hat U_1=\frac{1}{2d^2}I_{d^2}-\frac{\mu}{2}M, \quad \hat U_2=\frac{1}{2d^2}I_{d^2}+\frac{\mu}{2}M, \quad \hat V=\mu I_{dr}, \quad \hat G=I_{dr}, \quad \hat\lambda=1, \quad \hat y=0,
\]
where
\[
M=r((\vect I_d)\hat{\mathbf e}^T+\hat{\mathbf e}(\vect I_d)^T)+\hat{\mathbf X}\hat{\mathbf X}^T,
\]
is a strict feasible solution to the above dual problem \eqref{eq:lmidual} as long as $\mu>0$ is sufficiently small, Slater's condition implies that strong duality holds for the optimization problem defining $\LMI(\hat X,\hat Z)$. Therefore, we only need to prove that the optimal value of \eqref{eq:lmidual} is smaller than or equal to the optimal value of the dual of the optimization problem defining $\overline\LMI(X,Z)$ given by:
\begin{equation}\label{eq:lmidual2}
\begin{aligned}
\max_{U_1,U_2,V,G,\lambda,y} \quad & \tr(U_1-U_2)-\tr(G)-a^2\lambda-b\tr(V) \\
\st \quad & \tr(U_1+U_2)=1, \\
&\sum_{j=1}^r(\mathbf Xy-\vect V_{j,j})\mathbf e^T+\sum_{j=1}^r\mathbf e(\mathbf Xy-\vect V_{j,j})^T-\mathbf XV\mathbf X^T=\mathbf P(U_1-U_2)\mathbf P^T, \\
& \begin{bmatrix}
G & -y \\
-y^T & \lambda
\end{bmatrix} \succeq 0, \\
&U_1 \succeq 0, \quad U_2 \succeq 0, \quad V=\begin{bmatrix}
V_{1,1} & \cdots & V_{r,1} \\
\vdots & \ddots & \vdots \\
V_{r,1}^T & \cdots & V_{r,r}
\end{bmatrix} \succeq 0.
\end{aligned}
\end{equation}
The above claim can be verified by noting that given any feasible solution
\[
(\hat U_1,\hat U_2,\hat V,\hat G,\hat\lambda,\hat y)
\]
to \eqref{eq:lmidual}, the matrices
\begin{gather*}
U_1=\hat U_1, \quad U_2=\hat U_2, \quad V=(I_r \otimes P)\hat V(I_r \otimes P)^T, \\
\begin{bmatrix}
G & -y \\
-y^T & \lambda
\end{bmatrix}=\begin{bmatrix}
I_r \otimes P & 0 \\
0 & 1
\end{bmatrix}\begin{bmatrix}
\hat G & -\hat y \\
-\hat y^T & \hat\lambda
\end{bmatrix}\begin{bmatrix}
(I_r \otimes P)^T & 0 \\
0 & 1
\end{bmatrix}
\end{gather*}
form a feasible solution to \eqref{eq:lmidual2}, and both solutions have the same optimal objective value.
\end{proof}

\vskip 0.2in
\bibliography{nonlinearrip}
\end{document}